\newtheorem{thm}{Theorem}[section]
\newtheorem{lem}[thm]{Lemma}
\newtheorem{prop}[thm]{Proposition}
\newtheorem{cor}[thm]{Corollary}
\theoremstyle{definition}
\newtheorem{defn}[thm]{Definition}
\newtheorem{nota}[thm]{Notation}
\newtheorem{rem}[thm]{Remark}
\newtheorem{exam}[thm]{Example}
\newtheorem{cons}[thm]{Construction}
\newcommand{\bC}{{\mathbb{C}}}
\newcommand{\bE}{{\mathbb{E}}}
\newcommand{\bF}{{\mathbb{F}}}
\newcommand{\bN}{{\mathbb{N}}}
\newcommand{\bR}{{\mathbb{R}}}
\newcommand{\A}{{\mathcal{A}}}
\newcommand{\B}{{\mathcal{B}}}
\newcommand{\C}{{\mathcal{C}}}
\newcommand{\D}{{\mathcal{D}}}
\newcommand{\E}{{\mathcal{E}}}
\newcommand{\F}{{\mathcal{F}}}
\newcommand{\I}{{\mathcal{I}}}
\newcommand{\K}{{\mathcal{K}}}
\renewcommand{\L}{{\mathcal{L}}}
\newcommand{\N}{{\mathcal{N}}}
\newcommand{\R}{{\mathcal{R}}}
\renewcommand{\S}{{\mathcal{S}}}
\newcommand{\X}{{\mathcal{X}}}
\newcommand{\Y}{{\mathcal{Y}}}
\newcommand{\Z}{{\mathcal{Z}}}
\newcommand{\qand}{\quad\text{and}\quad}
\newcommand{\alg}{\mathrm{alg}}
\renewcommand{\th}{\mathrm{th}}
\newcommand{\lat}{\mathrm{lat}}
\newcommand{\capp}{\mathrm{cap}}
\tikzset{Box/.style={very thick, rounded corners}}
\tikzset{marked/.style={star, star point height = .75mm, star points =5, fill=black,minimum size=2mm, inner sep=0mm} }
\tikzset{verythickline/.style = {line width=7pt}}
\tikzset{thickline/.style = {line width=5pt}}
\tikzset{medthick/.style = {line width=3pt}}
\tikzset{med/.style = {line width=2pt}}
\tikzset{count/.style = {fill=white,circle,draw,thin, inner sep=2pt}}
\tikzset{rcount/.style = {fill=white,rectangle,draw,thin,inner sep=2pt, rounded corners}}
\tikzset{cpr/.style = {draw,fill=white,rectangle,thin, rounded corners}}
\definecolor{ggreen}{HTML}{00BB33}
\begin{document}

\nocite{*}

\title[Conditionally bi-free independence with amalgamation]{Conditionally bi-free independence with amalgamation}

\author{Yinzheng Gu and Paul Skoufranis}

\address{Department of Mathematics and Statistics, Queen's University, Jeffrey Hall, Kingston, Ontario, K7L 3N6, Canada}
\email{gu.y@queensu.ca}

\address{Department of Mathematics and Statistics, York University, 4700 Keele Street, Toronto, Ontario, M3J 1P3, Canada}
\email{pskoufra@yorku.ca}

\date{\today}
\subjclass[2010]{Primary 46L54; Secondary 46L53.}
\keywords{conditionally bi-free independence, conditionally bi-multiplicativity, conditionally bi-free moment and cumulant pairs.}

\begin{abstract}
In this paper, we introduce the notion of conditionally bi-free independence in an amalgamated setting. We define operator-valued conditionally bi-multiplicative pairs of functions  and construct operator-valued conditionally bi-free moment and cumulant functions. It is demonstrated that conditionally bi-free independence with amalgamation is equivalent to the vanishing of mixed operator-valued bi-free and conditionally bi-free cumulants. Furthermore, an operator-valued conditionally bi-free partial $\mathcal{R}$-transform is constructed and various operator-valued conditionally bi-free limit theorems are studied.
\end{abstract}

\maketitle

\section{Introduction}

The notion of conditionally free (c-free for short) independence was introduced in \cite{BLS1996} as a generalization of the notion of free independence to two-state systems.  In our previous paper \cite{GS2016} we introduced the notion of conditionally bi-free (c-bi-free for short) independence in order to study the non-commutative left and right actions of algebras on a reduced c-free product simultaneously.  Thus conditional bi-freeness is an extension of the notion of bi-free independence \cite{V2014} to two-state systems. Moreover \cite{GS2016} introduced c-$(\ell, r)$-cumulants and demonstrated that a family of pairs of algebras in a two-state non-commutative probability space is conditionally bi-free if and only if mixed $(\ell, r)$- and c-$(\ell, r)$-cumulants vanish.

In \cite{V1995} Voiculescu generalized his own notion of free independence by replacing the scalars with an arbitrary algebra thereby obtaining the notion of free independence with amalgamation (see also \cite{S1998} for the combinatorial aspects).  For c-free independence, the generalization to an amalgamated setting over a pair of algebras was done by Popa in \cite{P2008} (see also \cite{M2002}). On the other hand, the framework for generalizing bi-free independence to an amalgamated setting was suggested by Voiculescu in \cite{V2014}*{Section 8} and the theory was fully developed in \cite{CNS2015-2}.

The main goal of this paper is to extend the notion of c-bi-free independence to an amalgamated setting over a pair of algebras.  Furthermore, we demonstrate that the combinatorics of conditionally bi-free probability and bi-free probability with amalgamation, which are governed by the lattice of bi-non-crossing partitions, are specific instances of more general combinatorial structures.

Including this introduction this paper contains nine sections which are structured as follows. Section \ref{sec:prelims} briefly reviews some of the background material pertaining to conditionally bi-free probability and bi-free probability with amalgamation from \cites{CNS2015-1, CNS2015-2, GS2016}. In particular, the notions bi-non-crossing partitions and diagrams, their lateral refinements and cappings, interior and exterior blocks, $\B$-$\B$-non-commutative probability spaces, operator-valued bi-multiplicative functions, and the operator-valued bi-free moment and cumulant functions are recalled.

Section \ref{sec:c-bi-free-defn} introduces the structures studied within conditionally bi-free independence with amalgamation. We define the notion of a $\B$-$\B$-non-commutative probability space with a pair of $(\B, \D)$-valued expectations $(\A, \bE, \bF, \varepsilon)$ (see Definition \ref{BBncpsBD}), demonstrate a representation of $\A$ as linear operators on a $\B$-$\B$-bimodule with a pair of specified $(\B, \D)$-valued states (see Theorem \ref{Embedding}), and define the notion of conditionally bi-free independence with amalgamation over $(\B, \D)$ thereby generalizing conditionally bi-free independence to the operator-valued setting and bi-free independence with amalgamation to the two-state setting.

Section \ref{sec:pairs-of-fns} introduces the notion of an operator-valued conditionally bi-multiplicative pair of functions (see Definition \ref{CondBiMulti}). Each such pair consists of two functions where the first function is operator-valued bi-multiplicative (see \cite{CNS2015-2}*{Definition 4.2.1}) and the second function is defined via a certain rule using the first function. Furthermore, operator-valued conditionally bi-free moment and cumulant pairs (see Definitions \ref{CBFMomentPair} and \ref{OpVCBFCumulants}) are introduced and shown to be operator-valued conditionally bi-multiplicative.

Sections \ref{sec:moment-express} and \ref{sec:additivity} provide alternate characterizations of conditionally bi-free independence with amalgamation. More precisely, Section \ref{sec:moment-express} demonstrates through Theorem \ref{MomentFormulae} that a family of pairs of $\B$-algebras in a $\B$-$\B$-non-commutative probability space with a pair of $(\B, \D)$-valued expectations $(\A, \bE, \bF, \varepsilon)$ is c-bi-free over $(\B, \D)$ if and only if certain moment expressions with respect to $\bE$ and $\bF$ are satisfied. On the other hand, Section \ref{sec:additivity} demonstrates through Theorem \ref{VanishingEquiv} that a family of pairs of $\B$-algebras is c-bi-free over $(\B, \D)$ if and only if their mixed operator-valued bi-free and conditionally bi-free cumulants vanish.

Section \ref{sec:additional} provides additional properties such as the vanishing of operator-valued conditionally bi-free cumulants when a left or right $\B$-operator is input, how c-bi-free independence over $(\B, \D)$ can be deduced from c-free independence over $(\B, \D)$ under certain conditions, and how operator-valued conditionally bi-free cumulants involving products of operators may be computed.

In Section \ref{sec:R-transform}, an operator-valued conditionally bi-free partial $\mathcal{R}$-transform is constructed as the operator-valued analogue of the conditionally bi-free partial $\mathcal{R}$-transform (see \cite{GS2016}*{Definition 5.3}). As with the operator-valued bi-free partial $\mathcal{R}$-transform (see \cite{S2015}*{Section 5}), the said transform is also a function of three $\B$-variables, and a formula relating it to the moment series is proved using combinatorics.
Finally, in Section \ref{sec:limit-thms}, operator-valued c-bi-free distributions are discussed and various operator-valued c-bi-free limit theorems are studied.

\section{Preliminaries}\label{sec:prelims}

In this section, we review the necessary background on conditionally bi-free probability and operator-valued bi-free probability required for this paper.

\subsection{Conditionally bi-free probability}

We recall several definitions and results relating to conditionally bi-free probability. For more precision, see \cite{GS2016}.

\begin{defn}
Let $(\A, \varphi, \psi)$ be a two-state non-commutative probability space; that is, $\A$ is a unital algebra and $\varphi, \psi: \A \to \bC$ are unital linear functionals. A \textit{pair of algebras} in $\A$ is an ordered pair $(A_\ell, A_r)$ of unital subalgebras of $\A$.
\end{defn}

\begin{defn}
A family $\{(A_{k, \ell}, A_{k, r})\}_{k \in K}$ of pairs of algebras in a two-state non-commutative probability space $(\A, \varphi, \psi)$ is said to be \textit{conditionally bi-freely independent} (or \textit{c-bi-free} for short) with respect to $(\varphi, \psi)$ if there is a family of two-state vector spaces with specified state-vectors $\{(\X_k, \X_k^\circ, \xi_k, \varphi_k)\}_{k \in K}$ and unital homomorphisms
\[\ell_k: A_{k, \ell} \to \L(\X_k) \qand r_k: A_{k, r} \to \L(\X_k)\]
such that the joint distribution of $\{(A_{k, \ell}, A_{k, r})\}_{k \in K}$ with respect to $(\varphi, \psi)$ is equal to the joint distribution of the family
\[\{(\lambda_k \circ \ell_k(A_{k, \ell}), \rho_k \circ r_k(A_{k, r}))\}_{k \in K}\]
in $\L(\X)$ with respect to $(\varphi_\xi, \psi_\xi)$, where $(\X, \X^\circ, \xi, \varphi) = *_{k \in K}(\X_k, \X_k^\circ, \xi_k, \varphi_k)$.
\end{defn}

In general, a map $\chi: \{1, \dots, n\} \to \{\ell, r\}$ is used to designate whether the $k^{\mathrm{th}}$ operator in a sequence of $n$ operators is a left operator (when $\chi(k) = \ell$) or a right operator (when $\chi(k) = r$), a map $\omega: \{1, \dots, n\} \to I \sqcup J$ is used to designate the index of the $k^\th$ operator, and a map $\omega: \{1, \dots, n\} \to K$ is used to designate from which collection of operators the $k^\th$ operator hails from. 

Given $\omega: \{1, \dots, n\} \to I \sqcup J$ for non-empty disjoint index sets $I$ and $J$, we define the corresponding map $\chi_\omega: \{1, \dots, n\} \to \{\ell, r\}$ by
\[\chi_\omega(k) = \begin{cases}
\ell &\text{if } \omega(k) \in I\\
r &\text{if } \omega(k) \in J
\end{cases}.\]
Given a map $\omega: \{1, \dots, n\} \to K$, we may view $\omega$ as a partition of $\{1, \dots, n\}$ with blocks $\{\omega^{-1}(\{k\})\}_{k \in K}$. Thus $\pi \leq \omega$ denotes $\pi$ is a refinement of the partition induced by $\omega$.

For the basic definitions and combinatorics of bi-free probability that will be used in this paper, we refer the reader to \cites{CNS2015-1, CNS2015-2, MN2015, V2014} or the summary given in \cite{GS2016}*{Section 2}. Particular attention should be paid to:
\begin{itemize}
\item the set $\B\N\C(\chi)$ of bi-non-crossing partitions with respect to $\chi: \{1, \dots, n\} \to \{\ell, r\}$, and the minimal and maximal elements $0_\chi$ and $1_\chi$ of $\B\N\C(\chi)$ (see \cite{CNS2015-2}*{Definition 2.1.1});

\item  for $m, n \geq 0$ with $m + n \geq 1$, $1_{m, n}$ denotes $1_{\chi_{m, n}}$ where $\chi_{m, n}: \{1, \dots, m + n\} \to \{\ell, r\}$ is such that $\chi_{m, n}(k) = \ell$ if $k \leq m$ and $\chi_{m, n}(k) = r$ if $k > m$;

\item the M\"{o}bius function $\mu_{\B\N\C}$ on the lattice of bi-non-crossing partitions (see \cite{CNS2015-1}*{Remark 3.1.4});

\item the total ordering $\prec_\chi$ on $\{1, \dots, n\}$ and the notion of $\chi$-interval induced by $\chi: \{1, \dots, n\} \to \{\ell, r\}$ (see \cite{CNS2015-2}*{Definition 4.1.1});

\item the set $\L\R(\chi, \omega)$ of shaded $\L\R$-diagrams corresponding to $\chi: \{1, \dots, n\} \to \{\ell, r\}$ and $\omega: \{1, \dots, n\} \to K$, and the subsets $\L\R_k(\chi, \omega)$ ($1 \leq k \leq n$) of $\L\R(\chi, \omega)$ with exactly $k$ spines reaching the top (see \cite{CNS2015-1}*{Section 2.5});

\item the notion $\leq_\lat$ of lateral refinement (see \cite{CNS2015-1}*{Definition 2.5.5});

\item the family $\{\kappa_\chi: \A^n \to \bC\}_{n \geq 1, \chi: \{1, \dots, n\} \to \{\ell, r\}}$ of $(\ell, r)$-cumulants (see \cite{MN2015}*{Definition 5.2}).
\end{itemize}

Inspired by the `vanishing of mixed $(\ell, r)$-cumulants' characterization of bi-free independence and the `vanishing of mixed free and c-free cumulants' characterization of c-free independence, we introduced in \cite{GS2016}*{Subsection 3.3} the family of c-$(\ell, r)$-cumulants using bi-non-crossing partitions that are divided into two types. More precisely, a block $V$ of a bi-non-crossing partition $\pi \in \B\N\C(\chi)$ is said to be \textit{interior} if there exists another block $W$ of $\pi$ such that $\min_{\prec_\chi}(W) \prec_\chi \min_{\prec_\chi}(V)$ and $\max_{\prec_\chi}(V) \prec_\chi \max_{\prec_\chi}(W)$, where $\min_{\prec_\chi}$ and $\max_{\prec_\chi}$ denote the minimum and maximum elements with respect to $\prec_\chi$. A block of $\pi$ is said to be \textit{exterior} if it is not interior. The family 
\[
\{\K_\chi: \A^n \to \bC\}_{n \geq 1, \chi: \{1, \dots, n\} \to \{\ell, r\}}
\]
of \textit{c-$(\ell, r)$-cumulants} of a two-state non-commutative probability space $(\A, \varphi, \psi)$ is recursively defined by
\[\varphi(a_1\cdots a_n) = \sum_{\pi \in \B\N\C(\chi)}\K_{\pi}(a_1, \dots, a_n),\]
where
\[\K_{\pi}(a_1, \dots, a_n) = \left(\prod_{\substack{V \in \pi\\V\,\mathrm{interior}}}\kappa_{\chi|_V}((a_1, \dots, a_n)|_V)\right)\left(\prod_{\substack{V \in \pi\\V\,\mathrm{exterior}}}\K_{\chi|_V}((a_1, \dots, a_n)|_V)\right),\]
for all $n \geq 1$, $\chi: \{1, \dots, n\} \to \{\ell, r\}$, and $a_1, \dots, a_n \in \A$.

Furthermore, as noticed in \cite{GS2016}*{Section 4}, in order to obtain a moment formula for conditionally bi-free independence, additional sets of shaded diagrams and terminology are required.

\begin{defn}\label{ShadedDiagrams}
Let $n \geq 1$, $\chi: \{1, \dots, n\} \to \{\ell, r\}$, and $\omega: \{1, \dots, n\} \to K$ be given.
\begin{enumerate}[$\qquad(1)$]
\item For $0 \leq k \leq n$, let $\L\R_k^\lat(\chi, \omega)$ denote the set of all diagrams that can be obtained from $\L\R_k(\chi, \omega)$ under later refinement (i.e., cutting spines that do not reach the top). For $D' \in \L\R_k^\lat(\chi, \omega)$ and $D \in \L\R_k(\chi, \omega)$, write $D \geq_\lat D'$ if $D'$ can be obtained by laterally refining $D$. Moreover, let
\[\L\R^\lat(\chi, \omega) = \bigcup_{k = 0}^n\L\R_k^\lat(\chi, \omega).\]

\item Let $0 \leq k \leq n$ and $D \in \L\R_k^\lat(\chi, \omega)$. A diagram $D'$ is said to be a \textit{capping} of $D$, denoted $D \geq_\capp D'$, if $D' = D$ or $D'$ can be obtained by removing spines from $D$ that reach the top. Let $\L\R_m^{\lat\capp}(\chi, \omega)$ denote the set of all diagrams with $m$ spines reaching the top that can be obtained by capping some $D \in \L\R_k^\lat(\chi, \omega)$ with $k \geq m$. Moreover, let
\[\L\R^{\lat\capp}(\chi, \omega) = \bigcup_{m = 0}^n\L\R_m^{\lat\capp}(\chi, \omega).\]

\item For $D \in \L\R_m^{\lat\capp}(\chi, \omega)$, let $|D| = (\text{number of blocks of } D) + m$.

\item Let $0 \leq m \leq n$, $k \geq m$, $D \in \L\R_k(\chi, \omega)$, and $D' \in \L\R_m^{\lat\capp}(\chi, \omega)$. We say that $D$ \textit{laterally caps} to $D'$, denoted $D \geq_{\lat\capp} D'$, if there exists $D'' \in \L\R_k^\lat(\chi, \omega)$ such that $D \geq_\lat D''$ and $D'' \geq_\capp D'$.
\end{enumerate}
\end{defn}

Suppose $a_1, \dots, a_n$ are elements in a two-state non-commutative probability space $(\A, \varphi, \psi)$, and $D \in \L\R^{\lat\capp}(\chi, \omega)$ with blocks $V_1, \dots, V_p$ whose spines do not reach the top and $W_1, \dots, W_q$ whose spines reach the top. Writing $V_i = \{r_{i, 1} < \cdots < r_{i, s_i}\}$ and $W_j = \{r_{j, 1} < \cdots < r_{j, t_j}\}$, we define
\[\varphi_D(a_1, \dots, a_n) = \prod_{i = 1}^p\psi(a_{r_{i, 1}}\cdots a_{r_{i, s_i}})\prod_{j = 1}^q\varphi(a_{r_{j, 1}}\cdots a_{r_{j, t_j}}).\]
Under the above notation, the following moment type characterization and vanishing of mixed cumulants characterization were established in \cite{GS2016}*{Theorems 4.1 and 4.8}.

\begin{thm}\label{CBFMoments}
A family $\{(A_{k, \ell}, A_{k, r})\}_{k \in K}$ of pairs of algebras in a two-state non-commutative probability space $(\A, \varphi, \psi)$ is c-bi-free with respect to $(\varphi, \psi)$ if and only if
\begin{equation}\label{psi-Moment}
\psi(a_1\cdots a_n) = \sum_{\pi \in \B\N\C(\chi)}\left[\sum_{\substack{\sigma \in \B\N\C(\chi)\\\pi \leq \sigma \leq \omega}}\mu_{\B\N\C}(\pi, \sigma)\right]\psi_\pi(a_1, \dots, a_n)
\end{equation}
and
\begin{equation}\label{phi-Moment}
\varphi(a_1\cdots a_n) = \sum_{D \in \L\R^{\lat\capp}(\chi, \omega)}\left[\sum_{\substack{D' \in \L\R(\chi, \omega)\\D' \geq_{\lat\capp} D}}(-1)^{|D| - |D'|}\right]\varphi_D(a_1, \dots, a_n)
\end{equation}
for all $n \geq 1$, $\chi: \{1, \dots, n\} \to \{\ell, r\}$, $\omega: \{1, \dots, n\} \to K$, and $a_1, \dots, a_n \in \A$ with $a_k \in A_{\omega(k), \chi(k)}$.

Equivalently, for all $n \geq 2$, $\chi: \{1, \dots, n\} \to \{\ell, r\}$, $\omega: \{1, \dots, n\} \to K$, and $a_k$ as above, we have
\[\kappa_\chi(a_1, \dots, a_n) = \K_\chi(a_1, \dots, a_n) = 0\]
whenever $\omega$ is not constant.
\end{thm}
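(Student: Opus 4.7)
Both the moment formulas (\ref{psi-Moment}), (\ref{phi-Moment}) and the cumulant-vanishing statement should be derived from the canonical model $*_{k\in K}(\X_k,\X_k^\circ,\xi_k,\varphi_k)$ underlying c-bi-freeness. The strategy is to first establish the vanishing of mixed $(\ell,r)$- and c-$(\ell,r)$-cumulants directly on the model, then translate these vanishing statements into the moment identities by Möbius inversion and a combinatorial reorganization; conversely, the moment formulas determine the joint distribution uniquely, so they in turn force c-bi-freeness.

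\textbf{The $\psi$-side.} The state $\psi$ on the model is precisely the vector state for $\xi$ on the bi-free product of $(\ell_k(A_{k,\ell}),r_k(A_{k,r}))$, so c-bi-freeness with respect to $(\varphi,\psi)$ automatically entails bi-freeness with respect to $\psi$, i.e.\ $\kappa_\chi(a_1,\ldots,a_n)=0$ whenever $\omega$ is non-constant. Substituting this vanishing into $\psi(a_1\cdots a_n)=\sum_{\pi\in\B\N\C(\chi)}\kappa_\pi(a_1,\ldots,a_n)$ restricts the sum to $\pi\leq\omega$, and then applying the inversion $\kappa_\sigma=\sum_{\pi\leq\sigma}\mu_{\B\N\C}(\pi,\sigma)\psi_\pi$ and switching the order of summation produces the bracketed coefficient $\sum_{\pi\leq\sigma\leq\omega}\mu_{\B\N\C}(\pi,\sigma)$ in (\ref{psi-Moment}).

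\textbf{The $\varphi$-side.} I would compute $\varphi(a_1\cdots a_n)$ directly on the model by successively applying $a_n,\ldots,a_1$ to $\xi$ and decomposing intermediate vectors in the free-product grading $\X=\bC\xi\oplus\bigoplus \X_{k_1}^\circ\otimes\cdots\otimes\X_{k_p}^\circ$ (with $k_j\neq k_{j+1}$). At each step an operator either creates, acts within, or contracts tensor factors; tracking these events yields a sum indexed by shaded $\L\R$-diagrams in $\L\R(\chi,\omega)$, in which blocks ultimately absorbed into $\xi_k$-directions contribute $\psi$-moments and spines that survive to the top contribute $\varphi$-moments via pairing against the second state-vector. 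This produces a first-pass identity $\varphi(a_1\cdots a_n)=\sum_{D'\in\L\R(\chi,\omega)}\varphi_{D'}(a_1,\ldots,a_n)$. One then reorganizes by grouping each $D'$ with the diagrams $D\in\L\R^{\lat\capp}(\chi,\omega)$ to which it laterally caps; a Möbius-type inclusion-exclusion over the cut/cap operations that turn $D'$ into $D$ delivers the sign $(-1)^{|D|-|D'|}$ and thus (\ref{phi-Moment}). Finally, Möbius inversion of the c-cumulant recursion $\varphi(a_1\cdots a_n)=\sum_{\pi\in\B\N\C(\chi)}\K_\pi(a_1,\ldots,a_n)$ combined with the already-established vanishing of mixed $\kappa$'s, which kills every $\K_\pi$ having an interior $\omega$-inhomogeneous block, extracts $\K_\chi=0$ at $\pi=1_\chi$ for non-constant $\omega$, completing the equivalence with the cumulant-vanishing characterization.

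\textbf{Main obstacle.} The decisive step is analysing the Möbius function on the enriched poset $(\L\R^{\lat\capp}(\chi,\omega),\geq_{\lat\capp})$. This poset mixes two a priori distinct operations---lateral refinement (cutting a spine short, which raises the block count) and capping (erasing a spine reaching the top, which lowers the spines-to-top count)---that act on individual spines more or less independently. The simple formula $(-1)^{|D|-|D'|}$ emerges only if one can show that these per-spine contributions factorise multiplicatively through the inclusion-exclusion, each cut or cap contributing a single $-1$, and that $|D|=(\text{blocks of }D)+(\text{spines of }D\text{ reaching the top})$ faithfully records the net number of operations separating $D'$ from $D$. Once this factorisation is verified, the rest of the argument is careful bookkeeping of block-to-moment correspondences and signs.
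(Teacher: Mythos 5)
This theorem is not proved in the present paper at all; it is quoted from \cite{GS2016}*{Theorems 4.1 and 4.8}, and what Sections \ref{sec:moment-express} and \ref{sec:additivity} actually establish is its operator-valued analogue. Your overall architecture --- compute on the reduced c-free product model for the direct implication, invoke uniqueness of the joint distribution for the converse, and pass through a diagram/partition correspondence for the cumulant characterization --- does match that of \cite{GS2016} and of Theorems \ref{MomentFormulae} and \ref{VanishingEquiv} here, and your treatment of the $\psi$-side is correct.

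On the $\varphi$-side, however, there are genuine gaps. First, the ``first-pass identity'' $\varphi(a_1\cdots a_n)=\sum_{D'\in\L\R(\chi,\omega)}\varphi_{D'}(a_1,\dots,a_n)$ is not what the model computation yields. The vector $a_1\cdots a_n\xi$ decomposes as a \emph{signed} sum over laterally refined diagrams (the scalar case of equation \eqref{MomentExpression}), and the tensor factors attached to spines reaching the top carry $(1-\mathfrak{p}_k)$-projections; applying the second state to such a factor therefore produces the difference of a $\varphi$-moment and a $\psi$-moment, and it is exactly the expansion of these differences that creates the sum over cappings (Lemma \ref{ChangeCoeff}, whose scalar predecessor is \cite{GS2016}*{Lemma 4.6}). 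Your unsigned sum over full diagrams cannot equal $\varphi(a_1\cdots a_n)$: comparing with \eqref{phi-Moment}, it omits every $D\in\L\R^{\lat\capp}(\chi,\omega)\setminus\L\R(\chi,\omega)$, and those terms have nonzero coefficients in general. Second, the coefficient identity $\sum_{D'\geq_{\lat\capp}D}(-1)^{|D|-|D'|}=C'_D$, which you correctly isolate as the decisive step, is precisely \cite{GS2016}*{Lemma 4.7}; asserting that the per-spine cut and cap contributions ``factorise multiplicatively'' is the statement to be proved, not a proof of it, and your sketch supplies no argument. Third, the equivalence with $\K_\chi=0$ does not follow from M\"{o}bius inversion alone: $\K$ is defined by a recursion mixing $\kappa$ on interior blocks with $\K$ on exterior blocks, so one must match the expansion $\sum_{\pi\leq\omega}\K_\pi(a_1,\dots,a_n)$ term by term against the diagram sum in \eqref{phi-Moment} (the correspondence between pairs $(\pi,\iota)$ and diagrams in $\L\R^{\lat\capp}(\chi,\omega)$ from \cite{GS2016}*{Lemma 4.13}, reused in Theorem \ref{VanishingEquiv}), or else realize prescribed cumulants abstractly as in Lemma \ref{Existence}. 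As written, the proposal reduces the theorem to its hardest combinatorial ingredients without supplying them.
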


\subsection{Bi-free probability with amalgamation}

Now we recall bi-free probability in an amalgamated setting. Since our constructions for operator-valued conditionally bi-free independence in Section \ref{sec:c-bi-free-defn} are very similar, we shall only present the essential concepts.  Please refer to \cite{CNS2015-2}*{Section 3} or the summary given in \cite{S2015}*{Section 2} for complete details. In particular, the following definitions and results will be generalized:

\begin{itemize}
\item a $\B$-$\B$-bimodule with a specified $\B$-valued state $(\X, \X^\circ, \mathfrak{p})$ (see \cite{CNS2015-2}*{Definition 3.1.1});

\item the free product with amalgamation over $\B$ of a family $\{(\X_k, \X_k^\circ, \mathfrak{p}_k)\}_{k \in K}$ of $\B$-$\B$-bimodules with specified $\B$-valued states (see \cite{CNS2015-2}*{Construction 3.1.7});

\item a $\B$-$\B$-non-commutative probability space $(\A, \bE, \varepsilon)$ with left and right algebras $\A_\ell$ and $\A_r$ (see \cite{CNS2015-2}*{Definition 3.2.1});

\item any $\B$-$\B$-non-commutative probability can be represented on a $\B$-$\B$-bimodule with a specified $\B$-valued state (see \cite{CNS2015-2}*{Theorem 3.2.4}).
\end{itemize}

Furthermore, in order to discuss operator-valued bi-free probability, one needs the correct notions for moment and cumulant functions, which we now review in greater depth.

\begin{defn}\label{BiMulti}
Let $(\A, \mathbb{E}, \varepsilon)$ be a $\mathcal{B}$-$\mathcal{B}$-non-commutative probability space and let
\[\Psi: \bigcup_{n \geq 1}\bigcup_{\chi: \{1, \dots, n\} \to \{\ell, r\}}\B\N\C(\chi) \times \A_{\chi(1)} \times \cdots \times \A_{\chi(n)} \to \B\]
be a function that is linear in each $\A_{\chi(k)}$. We say that $\Psi$ is \textit{operator-valued bi-multiplicative} if for every $\chi: \{1, \dots, n\} \to \{\ell, r\}$, $Z_k \in \A_{\chi(k)}$, $b \in \B$, and $\pi \in \B\N\C(\chi)$, the following four conditions hold.
\begin{enumerate}[$\qquad(1)$]
\item Let
\[q = \max\{k \in \{1, \dots, n\} \, \mid \, \chi(k) \neq \chi(n)\}.\]
If $\chi(n) = \ell$, then
\[\Psi_{1_\chi}(Z_1, \dots, Z_{n - 1}, Z_nL_b) = \begin{cases}
\Psi_{1_\chi}(Z_1, \dots, Z_{q - 1}, Z_qR_b, Z_{q + 1}, \dots, Z_n) &\text{if } q \neq -\infty\\
\Psi_{1_\chi}(Z_1, \dots, Z_{n - 1}, Z_n)b &\text{if } q = -\infty
\end{cases}.\]
If $\chi(n) = r$, then
\[\Psi_{1_\chi}(Z_1, \dots, Z_{n - 1}, Z_nR_b) = \begin{cases}
\Psi_{1_\chi}(Z_1, \dots, Z_{q - 1}, Z_qL_b, Z_{q + 1}, \dots, Z_n) &\text{if } q \neq -\infty\\
b\Psi_{1_\chi}(Z_1, \dots, Z_{n - 1}, Z_n) &\text{if } q = -\infty
\end{cases}.\]

\item Let $p \in \{1, \dots, n\}$, and let
\[q = \max\{k \in \{1, \dots, n\} \, \mid \, \chi(k) = \chi(p), k < p\}.\]
If $\chi(p) = \ell$, then
\[\Psi_{1_\chi}(Z_1, \dots, Z_{p - 1}, L_bZ_p, Z_{p + 1}, \dots, Z_n) = \begin{cases}
\Psi_{1_\chi}(Z_1, \dots, Z_{q - 1}, Z_qL_b, Z_{q + 1}, \dots, Z_n) &\text{if } q \neq -\infty\\
b\Psi_{1_\chi}(Z_1, Z_2, \dots, Z_n) &\text{if } q = -\infty
\end{cases}.\]
If $\chi(p) = r$, then
\[\Psi_{1_\chi}(Z_1, \dots, Z_{p - 1}, R_bZ_p, Z_{p + 1}, \dots, Z_n) = \begin{cases}
\Psi_{1_\chi}(Z_1, \dots, Z_{q - 1}, Z_qR_b, Z_{q + 1}, \dots, Z_n) &\text{if } q \neq -\infty\\
\Psi_{1_\chi}(Z_1, Z_2, \dots, Z_n)b &\text{if } q = -\infty
\end{cases}.\]

\item Suppose that $V_1, \dots, V_m$ are $\chi$-intervals ordered by $\prec_\chi$ which partition $\{1, \dots, n\}$, each a union of blocks of $\pi$. Then
\[\Psi_\pi(Z_1, \dots, Z_n) = \Psi_{\pi|_{V_1}}((Z_1, \dots, Z_n)|_{V_1})\cdots\Psi_{\pi|_{V_m}}((Z_1, \dots, Z_n)|_{V_m}).\]

\item Suppose that $V$ and $W$ partition $\{1, \dots, n\}$, each a union of blocks of $\pi$, $V$ is a $\chi$-interval, and
\[\min_{\prec_\chi}(\{1, \dots, n\}), \max_{\prec_\chi}(\{1, \dots, n\}) \in W.\]
Let
\[p = \max_{\prec_\chi}\left(\left\{k \in W \, \mid \, k \prec_\chi \min_{\prec_\chi}(V)\right\}\right) \qand q = \min_{\prec_\chi}\left(\left\{k \in W \, \mid \, \max_{\prec_\chi}(V) \prec_\chi k\right\}\right).\]
Then
\begin{align*}
\Psi_\pi(Z_1, \dots, Z_n) &= \begin{cases}
\Psi_{\pi|_{W}}\left(\left(Z_1, \dots, Z_{p - 1}, Z_pL_{\Psi_{\pi|_{V}}\left((Z_1, \dots, Z_n)|_{V}\right)}, Z_{p + 1}, \dots, Z_n\right)|_{W}\right) &\text{if } \chi(p) = \ell\\
\Psi_{\pi|_{W}}\left(\left(Z_1, \dots, Z_{p - 1}, R_{\Psi_{\pi|_{V}}\left((Z_1, \dots, Z_n)|_{V}\right)}Z_p, Z_{p + 1}, \dots, Z_n\right)|_{W}\right) &\text{if } \chi(p) = r
\end{cases}\\
&= \begin{cases}
\Psi_{\pi|_{W}}\left(\left(Z_1, \dots, Z_{q - 1}, L_{\Psi_{\pi|_{V}}\left((Z_1, \dots, Z_n)|_{V}\right)}Z_q, Z_{q + 1}, \dots, Z_n\right)|_{W}\right) &\text{if } \chi(q) = \ell\\
\Psi_{\pi|_{W}}\left(\left(Z_1, \dots, Z_{q - 1}, Z_qR_{\Psi_{\pi|_{V}}\left((Z_1, \dots, Z_n)|_{V}\right)}, Z_{q + 1}, \dots, Z_n\right)|_{W}\right) &\text{if } \chi(q) = r
\end{cases}.
\end{align*}
\end{enumerate}
\end{defn}

Given an operator-valued bi-multiplicative function, conditions $(1)$ to $(4)$ above are reduction properties which allows one to move $\B$-operators around and, more importantly, to compute the values on arbitrary bi-non-crossing partitions based on its values on full non-crossing partitions.

Finally, the two most important operator-valued bi-multiplicative functions in the theory, called operator-valued bi-free moment and cumulant functions, are defined as follows.

\begin{defn}\label{E-pi}
Let $(\A, \mathbb{E}, \varepsilon)$ be a $\mathcal{B}$-$\mathcal{B}$-non-commutative probability space. For $\chi: \{1, \dots, n\} \to \{\ell, r\}$, $\pi \in \B\N\C(\chi)$, and $Z_1, \dots, Z_n \in \A$, define $\bE_\pi(Z_1, \dots, Z_n) \in \B$ recursively as follows: Let $V$ be the block of $\pi$ that terminates closest to the bottom, so $\min(V)$ is largest among all blocks of $\pi$.
\begin{enumerate}[$\qquad(1)$]
\item If $\pi$ contains exactly one block (that is, $\pi = 1_\chi$), define $\bE_{1_\chi}(Z_1, \dots, Z_n) = \bE(Z_1\cdots Z_n)$.

\item If $V = \{k + 1, \dots, n\}$ for some $k \in \{1, \dots, n - 1\}$ (so $\min(V)$ is not adjacent to any spine of $\pi$), define
\[\bE_\pi(Z_1, \dots, Z_n) = \begin{cases}
\bE_{\pi|_{V^\complement}}(Z_1, \dots, Z_kL_{\bE_{\pi|_V}(Z_{k + 1}, \dots, Z_n)}) &\text{if } \chi(\min(V)) = \ell\\
\bE_{\pi|_{V^\complement}}(Z_1, \dots, Z_kR_{\bE_{\pi|_V}(Z_{k + 1}, \dots, Z_n)}) &\text{if } \chi(\min(V)) = r\\
\end{cases}.\]

\item Otherwise, $\min(V)$ is adjacent to a spine. Let $W$ denote the block of $\pi$ corresponding to the spine adjacent to $\min(V)$ and let $k$ be the smallest element of $W$ that is larger than $\min(V)$.  Define
\[\bE_\pi(Z_1, \dots, Z_n) = \begin{cases}
\bE_{\pi|_{V^\complement}}((Z_1, \dots, Z_{k - 1}, L_{\bE_{\pi|_V}((Z_1, \dots, Z_n)|_V)}Z_k, Z_{k + 1}, \dots, Z_n)|_{V^\complement}) &\text{if } \chi(\min(V)) = \ell\\
\bE_{\pi|_{V^\complement}}((Z_1, \dots, Z_{k - 1}, R_{\bE_{\pi|_V}((Z_1, \dots, Z_n)|_V)}Z_k, Z_{k + 1}, \dots, Z_n)|_{V^\complement}) &\text{if } \chi(\min(V)) = r\\
\end{cases}.\]
\end{enumerate}
\end{defn}

\begin{defn}\label{MomentCumulant}
Let $(\A, \mathbb{E}, \varepsilon)$ be a $\mathcal{B}$-$\mathcal{B}$-non-commutative probability space. The \textit{operator-valued bi-free moment and cumulant functions} on $\A$ are
\[\E, \kappa: \bigcup_{n \geq 1}\bigcup_{\chi: \{1, \dots, n\} \to \{\ell, r\}}\B\N\C(\chi) \times \A_{\chi(1)} \times \cdots \times \A_{\chi(n)} \to \B\]
defined by
\[\E_\pi(Z_1, \dots, Z_n) = \bE_\pi(Z_1, \dots, Z_n) \qand \kappa_\pi(Z_1, \dots, Z_n) = \sum_{\substack{\sigma \in \B\N\C(\chi)\\\sigma \leq \pi}}\E_\sigma(Z_1, \dots, Z_n)\mu_{\B\N\C}(\sigma, \pi)\]
for all $\chi: \{1, \dots, n\} \to \{\ell, r\}$, $\pi \in \B\N\C(\chi)$, and $Z_k \in \A_{\chi(k)}$.
\end{defn}

A substantial amount of effort was taken in \cite{CNS2015-2}*{Sections 5 and 6} to show that both $\E$ and $\kappa$ are operator-valued bi-multiplicative.

\section{Conditionally bi-free families with amalgamation}\label{sec:c-bi-free-defn}

In this section, we develop the structures to discuss conditionally bi-free independence with amalgamation.  To begin, we need an analogue of a two-state vector space with a specified state-vector.

\begin{defn}
A \textit{$\mathcal{B}$-$\mathcal{B}$-bimodule with a pair of specified $(\B, \D)$-valued states} is a quadruple $(\mathcal{X}, \mathcal{X}^\circ, \mathfrak{p}, \mathfrak{q})$, where $\mathcal{B}$ and $\mathcal{D}$ are unital algebras such that $1 := 1_\D \in \mathcal{B} \subset \mathcal{D}$, $\mathcal{X}$ is a direct sum of $\mathcal{B}$-$\mathcal{B}$-bimodules $\mathcal{X} = \mathcal{B} \oplus \mathcal{X}^\circ$, $\mathfrak{p}: \mathcal{X} \to \mathcal{B}$ is the linear map $\mathfrak{p}(b \oplus \eta) = b$, and $\mathfrak{q}: \mathcal{X} \to \mathcal{D}$ is a linear $\mathcal{B}$-$\mathcal{B}$-bimodule map such that $\mathfrak{q}(1 \oplus 0) = 1$.
\end{defn}

Given a $\mathcal{B}$-$\mathcal{B}$-bimodule with a pair of specified $(\B, \D)$-valued states $(\mathcal{X}, \mathcal{X}^\circ, \mathfrak{p}, \mathfrak{q})$, we have
\[\mathfrak{p}(b_1\cdot x\cdot b_2) = b_1\mathfrak{p}(x)b_2 \qand \mathfrak{q}(b_1\cdot x\cdot b_2) = b_1\mathfrak{q}(x)b_2\]
for all $b_1, b_2 \in \B$ and $x \in \X$. Moreover, let $\L(\X)$ denote the set of linear operators on $\X$, and recall from \cite{CNS2015-2}*{Definition 3.1.3} that the operators $L_b, R_b \in \L(\X)$ are defined by
\[L_b(x) = b\cdot x \qand R_b(x) = x\cdot b\]
for all $b \in \B$ and $x \in \X$. In addition, the \textit{left and right algebras} of $\L(\X)$ are the unital subalgebras $\L_\ell(\X)$ and $\L_r(X)$ defined by
\[\L_\ell(X) = \{Z \in \L(\X) \, \mid \, ZR_b = R_bZ\text{ for all }b \in \B\}\]
and
\[\L_r(\X) = \{Z \in \L(\X) \, \mid \, ZL_b = L_bZ\text{ for all }b \in \B\},\]
respectively.

As we are interested in conditionally bi-free independence with amalgamation, we need two expectations on $\L(\X)$, one onto $\B$ and one to $\D$.

\begin{defn}
Given a $\B$-$\B$-bimodule with a pair of specified $(\B, \D)$-valued states $(\mathcal{X}, \mathcal{X}^\circ, \mathfrak{p}, \mathfrak{q})$, define the unital linear maps $\mathbb{E}_{\L(\X)}: \L(\X) \to \B$ and $\mathbb{F}_{\L(\X)}: \L(\X) \to \mathcal{D}$ by
\[\mathbb{E}_{\L(\X)}(Z) = \mathfrak{p}(Z(1 \oplus 0))\qand\mathbb{F}_{\L(\X)}(Z) = \mathfrak{q}(Z(1 \oplus 0))\]
for all $Z \in \L(\X)$. We call $\mathbb{E}_{\L(\X)}$ and $\mathbb{F}_{\L(\X)}$ the \textit{expectations} of $\L(\X)$ to $\B$ and $\mathcal{D}$, respectively.
\end{defn}

There are specific properties of these expectations we wish to model.

\begin{prop}\label{Expectations}
Let $(\mathcal{X}, \mathcal{X}^\circ, \mathfrak{p}, \mathfrak{q})$ be a $\B$-$\B$-bimodule with a pair of specified $(\B, \D)$-valued states. We have
\[\mathbb{E}_{\L(\X)}(L_{b_1}R_{b_2}Z) = b_1\mathbb{E}_{\L(\X)}(Z)b_2,\quad\mathbb{E}_{\L(\X)}(ZL_b) = \mathbb{E}_{\L(\X)}(ZR_b)\]
and
\[
\mathbb{F}_{\L(\X)}(L_{b_1}R_{b_2}Z) = b_1\mathbb{F}_{\L(\X)}(Z)b_2,\quad\mathbb{F}_{\L(\X)}(ZL_b) = \mathbb{F}_{\L(\X)}(ZR_b)
\]
for all $b_1, b_2, b \in \mathcal{B}$ and $Z \in \L(\X)$.
\end{prop}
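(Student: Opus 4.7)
The proof is essentially a direct computation from the definitions; there is no real obstacle. The key observation is that $L_b$ and $R_b$ agree on the vector $1 \oplus 0$, because the $\B$-$\B$-bimodule structure on $\B$ itself inside $\X = \B \oplus \X^\circ$ gives $L_b(1 \oplus 0) = b \cdot (1 \oplus 0) = b \oplus 0 = (1 \oplus 0) \cdot b = R_b(1 \oplus 0)$. This single identity will handle the two ``swap'' statements, while the $\B$-$\B$-bimodule map property of $\mathfrak{p}$ and $\mathfrak{q}$ will handle the two ``pull out $b_1, b_2$'' statements.

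\textbf{Step 1.} For the first identity, I would compute
\[L_{b_1} R_{b_2} Z(1 \oplus 0) = L_{b_1} R_{b_2}\bigl(Z(1 \oplus 0)\bigr) = b_1 \cdot Z(1 \oplus 0) \cdot b_2,\]
then apply $\mathfrak{p}$ and use $\mathfrak{p}(b_1 \cdot x \cdot b_2) = b_1 \mathfrak{p}(x) b_2$, as noted immediately after the definition of a $\B$-$\B$-bimodule with a pair of specified $(\B,\D)$-valued states. The same argument with $\mathfrak{q}$ in place of $\mathfrak{p}$ establishes the third identity.

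\textbf{Step 2.} For the second identity, I would evaluate
\[ZL_b(1 \oplus 0) = Z\bigl(L_b(1 \oplus 0)\bigr) = Z(b \oplus 0) = Z\bigl(R_b(1 \oplus 0)\bigr) = ZR_b(1 \oplus 0),\]
and then apply $\mathfrak{p}$ to both sides. Applying $\mathfrak{q}$ instead gives the fourth identity. In each case the argument relies only on the two defining features of the setup: the explicit form of $L_b$ and $R_b$ on $\B \oplus \X^\circ$, and the fact that $\mathfrak{p}$ and $\mathfrak{q}$ are $\B$-$\B$-bimodule maps. No combinatorial machinery is needed.
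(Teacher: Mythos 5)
Your proof is correct and follows essentially the same route as the paper: the identities $\mathbb{F}_{\L(\X)}(ZL_b)=\mathbb{F}_{\L(\X)}(ZR_b)$ and $\mathbb{E}_{\L(\X)}(ZL_b)=\mathbb{E}_{\L(\X)}(ZR_b)$ come from $L_b(1\oplus 0)=R_b(1\oplus 0)$, and the other two from the bimodule-map identities $\mathfrak{p}(b_1\cdot x\cdot b_2)=b_1\mathfrak{p}(x)b_2$ and $\mathfrak{q}(b_1\cdot x\cdot b_2)=b_1\mathfrak{q}(x)b_2$. The only cosmetic difference is that the paper cites \cite{CNS2015-2}*{Proposition 3.1.6} for the $\mathbb{E}_{\L(\X)}$ statements rather than redoing the (identical) computation you carry out.
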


\begin{proof}
The results regarding $\bE_{\L(\X)}$ were shown in \cite{CNS2015-2}*{Proposition 3.1.6}. Moreover, it is immediate that $\mathbb{F}_{\L(\X)}(ZL_b) = \mathbb{F}_{\L(\X)}(ZR_b)$ for all $b \in \B$ and $Z \in \L(\X)$ as $L_b(1 \oplus 0) = R_b(1 \oplus 0)$. Finally, since $\mathfrak{q}$ is a linear $\B$-$\B$-bimodule map, we have
\[\mathbb{F}_{\L(\X)}(L_{b_1}R_{b_2}Z) = \mathfrak{q}(L_{b_1}R_{b_2}Z(1 \oplus 0)) = b_1\mathfrak{q}(Z(1 \oplus 0))b_2 = b_1\bF_{\L(\X)}(Z)b_2\]
for all $b_1, b_2 \in \B$ and $Z \in \L(\X)$.
\end{proof}

Given the above definition and proposition, we extend the notion of a two-state non-commutative probability space $(\A, \varphi, \psi)$ to the operator-valued setting as follows.  Note this is also a natural extension of the notion of a $\B$-$\B$-non-commutative probability space $(\A, \bE, \varepsilon)$ from \cite{CNS2015-2}*{Definition 3.2.1} to the two-state setting.

\begin{defn}\label{BBncpsBD}
A \textit{$\B$-$\B$-non-commutative probability space with a pair of $(\B, \D)$-valued expectations} is a quadruple $(\A, \mathbb{E}, \mathbb{F}, \varepsilon)$, where $\A$, $\mathcal{B}$, and $\mathcal{D}$ are unital algebras such that $1 := 1_\D \in \mathcal{B} \subset \mathcal{D}$, $\varepsilon: \mathcal{B} \otimes \mathcal{B}^{\mathrm{op}} \to \A$ is a unital homomorphism such that $\varepsilon|_{\mathcal{B} \otimes 1}$ and $\varepsilon|_{1 \otimes \mathcal{B}^{\mathrm{op}}}$ are injective, and $\mathbb{E}: \A \to \mathcal{B}$ and $\mathbb{F}: \A \to \mathcal{D}$ are unital linear maps such that
\[\mathbb{E}(\varepsilon(b_1 \otimes b_2)Z) = b_1\mathbb{E}(Z)b_2,\quad\mathbb{E}(Z\varepsilon(b \otimes 1)) = \mathbb{E}(Z\varepsilon(1 \otimes b))\]
and
\[\mathbb{F}(\varepsilon(b_1 \otimes b_2)Z) = b_1\mathbb{F}(Z)b_2,\quad\mathbb{F}(Z\varepsilon(b \otimes 1)) = \mathbb{F}(Z\varepsilon(1 \otimes b))\]
for all $b_1, b_2, b \in \mathcal{B}$ and $Z \in \A$. Moreover, the unital subalgebras $\A_\ell$ and $\A_r$ of $\A$ defined by
\[\A_\ell = \{Z \in \A \, \mid \, Z\varepsilon(1 \otimes b) = \varepsilon(1 \otimes b)Z\text{ for all }b \in \mathcal{B}\}\]
and
\[\A_r = \{Z \in \A \, \mid \, Z\varepsilon(b \otimes 1) = \varepsilon(b \otimes 1)Z\text{ for all }b \in \mathcal{B}\}\]
will be called the \textit{left and right algebras} of $\A$ respectively.
\end{defn}

As with the bi-free case (see \cite{CNS2015-2}*{Remark 3.2.2}), if $(\X, \X^\circ, \mathfrak{p}, \mathfrak{q})$ is a $\B$-$\B$-bimodule with a pair of specified $(\B, \D)$-valued states, then we see via Proposition \ref{Expectations} that $(\L(\X), \bE_{\L(\X)}, \bF_{\L(\X)}, \varepsilon)$ is a $\B$-$\B$-non-commutative probability space with a pair of $(\B, \D)$-valued expectations where $\varepsilon: \B \otimes \B^{\mathrm{op}} \to \L(\X)$ is defined by $\varepsilon(b_1 \otimes b_2) = L_{b_1}R_{b_2}$.  Moreover, the following result demonstrates that any $\B$-$\B$-non-commutative probability space with a pair of $(\B, \D)$-valued expectations can be represented as linear operators on some $(\X, \X^\circ, \mathfrak{p}, \mathfrak{q})$. Hence Definition \ref{BBncpsBD} is the natural extension of \cite{CNS2015-2}*{Definition 3.2.1}. As such, we will write $L_b$ and $R_b$ instead of $\varepsilon(b \otimes 1)$ and $\varepsilon(1 \otimes b)$ and refer to these as left and right $\B$-operators, respectively.

\begin{thm}\label{Embedding}
If $(\A, \mathbb{E}_\A, \mathbb{F}_\A, \varepsilon)$ is a $\mathcal{B}$-$\mathcal{B}$-non-commutative probability space with a pair of $(\B, \D)$-valued expectations, then there exist a $\mathcal{B}$-$\mathcal{B}$-bimodule with a pair of specified $(\B, \D)$-valued states $(\mathcal{X}, \mathcal{X}^\circ, \mathfrak{p}, \mathfrak{q})$ and a unital homomorphism $\theta: \A \to \L(\X)$ such that
\begin{gather*}
\theta(L_{b_1}R_{b_2}) = L_{b_1}R_{b_2},\quad\theta(\A_\ell) \subset \L_\ell(\X),\quad\theta(\A_r) \subset \L_r(\X), \\
\mathbb{E}_{\L(\X)}(\theta(Z)) = \mathbb{E}_\A(Z),\qand\mathbb{F}_{\L(\X)}(\theta(Z)) = \mathbb{F}_\A(Z)
\end{gather*}
for all $b_1, b_2 \in \B$ and $Z \in \A$.
\end{thm}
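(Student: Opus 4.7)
The plan is to leverage \cite{CNS2015-2}*{Theorem 3.2.4} to produce the $\B$-$\B$-bimodule with specified $\B$-valued state $(\X, \X^\circ, \mathfrak{p})$ together with a unital homomorphism $\theta: \A \to \L(\X)$ satisfying $\theta(L_{b_1}R_{b_2}) = L_{b_1}R_{b_2}$, $\theta(\A_\ell) \subseteq \L_\ell(\X)$, $\theta(\A_r) \subseteq \L_r(\X)$, and $\bE_{\L(\X)} \circ \theta = \bE_\A$. All that then remains is to equip $\X$ with a linear $\B$-$\B$-bimodule map $\mathfrak{q}: \X \to \D$ sending $1 \oplus 0$ to $1$ such that $\bF_{\L(\X)} \circ \theta = \bF_\A$; once this is constructed, the theorem follows.

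Recall from the CNS construction that $\X$ is realized as a quotient of $\A$ by a distinguished sub-$\B$-$\B$-bimodule, $\theta$ is the action induced by left multiplication on this quotient, and the quotient map $\pi: \A \to \X$ defined by $\pi(Z) = \theta(Z)(1 \oplus 0)$ is a surjective $\B$-$\B$-bimodule map with $\pi(1_\A) = 1 \oplus 0$ and $\mathfrak{p} \circ \pi = \bE_\A$. The natural candidate is then to set
\[\mathfrak{q}(\pi(Z)) := \bF_\A(Z) \qfor Z \in \A.\]
Once this is seen to be well-defined, bimodule linearity and unitality of $\mathfrak{q}$, along with the identity $\bF_{\L(\X)}(\theta(Z)) = \mathfrak{q}(\pi(Z)) = \bF_\A(Z)$, follow directly from the corresponding hypotheses on $\bF_\A$ in Definition \ref{BBncpsBD}.

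The single non-routine step is therefore to verify that $\ker(\pi) \subseteq \ker(\bF_\A)$. By construction, $\ker(\pi)$ is generated by the relations on $\A$ that the CNS procedure imposes using the pair $(\varepsilon, \bE_\A)$. The hypotheses on $\bF_\A$ recorded in Definition \ref{BBncpsBD}, namely $\bF_\A(L_{b_1}R_{b_2}Z) = b_1 \bF_\A(Z) b_2$ and $\bF_\A(ZL_b) = \bF_\A(ZR_b)$, are precisely the $\D$-valued analogues of the compatibilities that force $\bE_\A$ to annihilate this kernel in \cite{CNS2015-2}*{Theorem 3.2.4}. Consequently the well-definedness portion of the CNS proof goes through verbatim with $\bF_\A$ substituted for $\bE_\A$. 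I expect this kernel verification to be the principal technical obstacle; beyond it, every remaining assertion of the theorem is inherited directly from the bi-free representation theorem.
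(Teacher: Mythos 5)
Your proposal is correct and follows essentially the same route as the paper: both take the representation $(\X,\X^\circ,\mathfrak{p},\theta)$ from \cite{CNS2015-2}*{Theorem 3.2.4} and then induce $\mathfrak{q}$ from $\bF_\A$ on the quotient, the only substantive point being that $\bF_\A$ annihilates $\mathrm{span}\{ZL_b - ZR_b\}$, which is immediate from $\bF_\A(ZL_b)=\bF_\A(ZR_b)$ in Definition \ref{BBncpsBD}. The paper phrases this by first factoring $\bF_\A|_{\ker(\bE_\A)}$ through $\Y$ and then extending to $\B\oplus\Y$, but under the identification $\X\cong\A/\mathrm{span}\{ZL_b-ZR_b\}$ this is the same map you define.
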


\begin{proof}
As shown in the proof of \cite{CNS2015-2}*{Theorem 3.2.4}, consider $\X = \B \oplus \Y$ as a vector space over $\mathbb{C}$ where
\[\Y = \ker(\mathbb{E}_\A)/\mathrm{span}\{ZL_b - ZR_b \, \mid \, Z \in \A, b \in \B\}.\]
Define $\theta: \A \to \L(\X)$ by
\[\theta(Z)(b) = \bE_\A(ZL_b) \oplus \pi(ZL_b - L_{\bE_\A(ZL_b)}),\quad b \in \B,\]
and
\[\theta(Z)(\pi(Y)) = \bE_\A(ZY) \oplus \pi(ZY - L_{\bE_\A(ZY)}),\quad Y \in \ker(\bE_\A),\]
where $\pi: \ker(\bE_\A) \to \Y$ denotes the canonical quotient map.  It was shown in \cite{CNS2015-2}*{Theorem 3.2.4} that $\theta$ is a unital homomorphism and $\X$ is a $\B$-$\B$-bimodule via
\[b\cdot\xi = \theta(L_b)(\xi)\qand\xi\cdot b = \theta(R_b)(\xi)\]
for all $b \in \B$ and $\xi \in \X$.
Thus we can define a specified $\B$-valued state $\mathfrak{p}$ on $\X$ by $\mathfrak{p}(b \oplus \pi(Y)) = b$ for all $b \in \B$ and $\pi(Y) \in \Y$.  Using this specified $\B$-valued state, we obtain that  $\theta(\A_\ell) \subset \L_\ell(\X)$, $\theta(\A_r) \subset \L_r(\X)$, and $\mathbb{E}_{\L(\X)}(\theta(Z)) = \mathbb{E}_\A(Z)$. 

On the other hand, since $\bF_\A(ZL_b - ZR_b) = 0$ for all $Z \in \A$ and $b \in \B$, there exists a unique linear map $\widetilde{\mathfrak{q}}: \Y \to \D$ such that $\bF_\A|_{\ker(\bE_\A)} = \widetilde{\mathfrak{q}} \circ \pi$. Let $\mathfrak{q}: \X \to \D$ be the linear map defined by
\[\mathfrak{q}(b \oplus \pi(Y)) = b + \widetilde{\mathfrak{q}} \circ \pi(Y),\quad b \in \B,\quad\pi(Y) \in \Y.\]
Then $\mathfrak{q}(1 \oplus 0) = 1$ and
\begin{align*}
\mathfrak{q}(b_1\cdot(b \oplus \pi(Y))\cdot b_2) &= \mathfrak{q}(\theta(L_{b_1})\theta(R_{b_2})(b \oplus \pi(Y)))\\
&= \mathfrak{q}(\theta(L_{b_1})(bb_2 \oplus \pi(R_{b_2}Y)))\\
&= \mathfrak{q}(b_1bb_2 \oplus \pi(L_{b_2}R_{b_2}Y))\\
&= b_1bb_2 + \bF_\A(L_{b_1}R_{b_2}Y)\\
&= b_1(b + \bF_\A(Y))b_2\\
&= b_1\mathfrak{q}(b \oplus \pi(Y))b_2
\end{align*}
for all $b_1, b_2, b \in \B$ and $\pi(Y) \in \Y$. Therefore, the quadruple $(\X, \Y, \mathfrak{p}, \mathfrak{q})$ is a $\B$-$\B$-bimodule with a pair of specified $(\B, \D)$-valued states. Finally, we have
\[\bF_{\L(\X)}(\theta(Z)) = \mathfrak{q}(\theta(Z)(1 \oplus 0)) = \mathfrak{q}(\bE_\A(Z) \oplus \pi(Z - L_{\bE_\A(Z)})) = \bE_\A(Z) + \bF_\A(Z - L_{\bE_\A(Z)}) = \bF_\A(Z)\]
for all $Z \in \A$.
\end{proof}

The next step is to extend the construction of the free product with amalgamation over $\B$ of a family $\{(\X_k, \X_k^\circ, \mathfrak{p}_k)\}_{k \in K}$ of $\B$-$\B$-bimodules with specified $\B$-valued states (see \cite{CNS2015-2}*{Construction 3.1.7}) to the current framework.

\begin{cons}\label{Construction}
Let $\{(\X_k, \X_k^\circ, \mathfrak{p}_k, \mathfrak{q}_k)\}_{k \in K}$ be a family of $\B$-$\B$-bimodules with pairs of specified $(\B, \D)$-valued states. The \textit{c-free product of $\{(\X_k, \X_k^\circ, \mathfrak{p}_k, \mathfrak{q}_k)\}_{k \in K}$ with amalgamation over $(\B, \D)$} is defined to be the $\B$-$\B$-bimodule with a pair of specified $(\B, \D)$-valued states $(\X, \X^\circ, \mathfrak{p}, \mathfrak{q})$, where $\X = \B \oplus \X^\circ$, $\X^\circ$ is the $\B$-$\B$-bimodule
\[\X^\circ = \bigoplus_{n \geq 1}\left(\bigoplus_{k_1 \neq \cdots \neq k_n}\X_{k_1}^\circ \otimes_{\B} \cdots \otimes_{\B} \X_{k_n}^\circ\right)\]
with left and right actions of $\B$ on $\X^\circ$ defined by
\[b\cdot(x_1 \otimes \cdots \otimes x_n) = (b \cdot x_1) \otimes x_2 \otimes \cdots \otimes x_n \qand (x_1 \otimes \cdots \otimes x_n)\cdot b = x_1 \otimes \cdots \otimes x_{n - 1} \otimes (x_n \cdot b),\]
respectively, $\mathfrak{p}: \X \to \B$ is the linear map $\mathfrak{p}(b \oplus \eta) = b$, and $\mathfrak{q}: \X \to \D$ is the linear $\B$-$\B$-bimodule map such that $\mathfrak{q}(1 \oplus 0) = 1$ and
\[\mathfrak{q}(x_1 \otimes \cdots \otimes x_n) = \mathfrak{q}_{k_1}(x_1)\cdots\mathfrak{q}_{k_n}(x_n)\]
for $x_1 \otimes \cdots \otimes x_n \in \X_{k_1}^\circ \otimes_{\B} \cdots \otimes_{\B} \X_{k_n}^\circ$  (note $\mathfrak{q}$ is well-defined as each $\mathfrak{q}_k$ is a linear $\B$-$\B$-bimodule map).

For every $k \in K$, let
\[V_k: \X \to \X_k \otimes_{\B} \left(\B \oplus \bigoplus_{n \geq 1}\left(\bigoplus_{\substack{k_1 \neq \cdots \neq k_n\\k_1 \neq k}}\X_{k_1}^\circ \otimes_{\B} \cdots \otimes_{\B} \X_{k_n}^\circ\right)\right)\]
be the standard $\B$-$\B$-bimodule isomorphism, and define the left representation $\lambda_k: \L(\X_k) \to \L(\X)$ by
\[\lambda_k(Z) = V_k^{-1}(Z \otimes I)V_k\]
for $Z \in \L(\X_k)$. Similarly, let
\[W_k: \X \to \left(\B \oplus \bigoplus_{n \geq 1}\left(\bigoplus_{\substack{k_1 \neq \cdots \neq k_n\\k_n \neq k}}\X_{k_1}^\circ \otimes_{\B} \cdots \otimes_{\B} \X_{k_n}^\circ\right)\right) \otimes_{\B} \X_k\]
be the standard $\B$-$\B$-bimodule isomorphism, and define the right representation $\rho_k: \L(\X_k) \to \L(\X)$ by
\[\rho_k(Z) = W_k^{-1}(I \otimes Z)W_k\]
for $Z \in \L(\X_k)$. For the exact formulae of how $\lambda_k(Z)$ and $\rho_k(Z)$ act on $\X$, we refer to \cite{CNS2015-2}*{Construction 3.1.7}. Note also that
\[\bE_{\L(\X)}(\lambda_k(Z)) = \bE_{\L(\X)}(\rho_k(Z)) = \bE_{\L(\X_k)}(Z) \qand \bF_{\L(\X)}(\lambda_k(Z)) = \bF_{\L(\X)}(\rho_k(Z)) = \bF_{\L(\X_k)}(Z)\]
for all $Z \in \L(\X_k)$.
\end{cons}

\begin{rem}
It is clear that that all of the above discussions hold if $\B = \D$. However, the more general setting that $\B \subset \D$ is desired due to a result of Boca \cite{B1991}. Indeed, suppose $\{\A_k\}_{k \in K}$ is a family of unital $C^*$-algebras containing $\B$ as a common $C^*$-subalgebra with $1_{\A_k} \in \B$, $\D$ is a unital $C^*$-algebra with $1_\D \in \B \subset \D$, and each $\A_k$ is endowed with two positive conditional expectations $\Psi_k: \A_k \to \B$ and $\Phi_k: \A_k \to \D$ such that $\A_k = \B \oplus \A_k^\circ$, where $\A_k^\circ = \ker(\Psi_k)$, as a direct sum of $\B$-$\B$-bimodules.

Let $\A = (*_\B)_{k \in K}\A_k$ be the free product of $\{\A_k\}_{k \in K}$ with amalgamation over $\B$ (which can be identified as
\[\A = \B \oplus \bigoplus_{n \geq 1}\left(\bigoplus_{k_1 \neq \cdots \neq k_n}\A_{k_1}^\circ \otimes_{\B} \cdots \otimes_{\B} \A_{k_n}^\circ\right)\]
as $\B$-$\B$-bimodules), let $\Psi = (*_{\B})_{k \in K}\Psi_k$ be the amalgamated free product of $\{\Psi_k\}_{k \in K}$, and let $\Phi: \A \to \D$ be the unital linear $\B$-$\B$-bimodule map defined by
\[\Phi(a_1 \otimes \cdots \otimes a_n) = \Phi_{k_1}(a_1)\cdots\Phi_{k_n}(a_n)\]
for $a_1 \otimes \cdots \otimes a_n \in \A_{k_1}^\circ \otimes_{\B} \cdots \otimes_{\B} \A_{k_n}^\circ$. It is well-known that $\Psi$ is positive (see, e.g., \cite{S1998}*{Theorem 3.5.6}). On the other hand, it follows from \cite{B1991}*{Theorem 3.1} that $\Phi$ is also positive, which is the main motivation for our setting.
\end{rem}

With Definition \ref{BBncpsBD} and Construction \ref{Construction} complete, we can define the notion of conditionally bi-free independence with amalgamation as follows.

\begin{defn}
\label{defn:op-c-bi-free-definition}
Let $(\A, \mathbb{E}, \mathbb{F}, \varepsilon)$ be a $\B$-$\B$-non-commutative probability space with a pair of $(\B, \D)$-valued expectations. A \textit{pair of $\B$-algebras} in $(\A, \mathbb{E}, \mathbb{F}, \varepsilon)$ is a pair $(\C_\ell, \C_r)$ of unital subalgebras of $\A$ such that
\[\varepsilon(\B \otimes 1) \subset \C_\ell \subset \A_\ell \qand \varepsilon(1 \otimes \B^{\mathrm{op}}) \subset \C_r \subset \A_r.\]

A family $\{(\A_{k, \ell}, \A_{k, r})\}_{k \in K}$ of pairs of $\B$-algebras in $(\A, \mathbb{E}, \mathbb{F}, \varepsilon)$ is said to be \textit{conditionally bi-free with amalgamation over $(\B, \D)$} (or \textit{c-bi-free over $(\B, \D)$} for short) if there is a family of $\B$-$\B$-bimodules with pairs of specified $(\B, \D)$-valued states $\{(\mathcal{X}_k, \mathcal{X}^\circ_k, \mathfrak{p}_k, \mathfrak{q}_k)\}_{k \in K}$ and unital homomorphisms
\[\ell_k: \A_{k, \ell} \to \L_\ell(\mathcal{X}_k)\qand r_k: \A_{k, r} \to \L_r(\mathcal{X}_k)\]
such that the joint distribution of $\{(\A_{k, \ell}, \A_{k, r})\}_{k \in K}$ with respect to $(\mathbb{E}, \mathbb{F})$ is equal to the joint distribution of the family
\[\{(\lambda_k \circ \ell_k(\A_{k, \ell}), \rho_k \circ r_k(\A_{k, r}))\}_{k \in K}\]
in $\L(\X)$ with respect to $(\mathbb{E}_{\L(\X)}, \mathbb{F}_{\L(\X)})$, where $(\X, \X^\circ, \mathfrak{p}, \mathfrak{q}) = (*_\B)_{k \in K}(\mathcal{X}_k, \mathcal{X}^\circ_k, \mathfrak{p}_k, \mathfrak{q}_k)$.
\end{defn}

It will be an immediate consequence of Theorem \ref{MomentFormulae} below that the above definition does not depend on a specific choice of representations. Moreover, it follows immediately from the definition that if $\{(\A_{k, \ell}, \A_{k, r})\}_{k \in K}$ is c-bi-free over $(\B, \D)$, then the family $\{\A_{k, \ell}\}_{k \in K}$ (respectively $\{\A_{k, r}\}_{k \in K}$) of left $\B$-algebras (respectively right $\B$-algebras) is c-free over $(\B, \D)$.

\section{Operator-valued conditionally bi-free pairs of functions}\label{sec:pairs-of-fns}

In order to study operator-valued conditional bi-free independence we must extend the notion of operator-valued bi-multiplicative functions to pairs of functions.

\subsection{Operator-valued conditionally bi-multiplicative pairs of functions}

We begin with an observation, which will be useful later.

\begin{rem}\label{Decomposition}
If $\pi \in \B\N\C(\chi)$ is a bi-non-crossing partition for some $\chi: \{1, \dots, n\} \to \{\ell, r\}$, then there exists a unique partition $V_1, \dots, V_m$ of $\{1,\ldots, n\}$ into $\chi$-intervals such that each $V_k$ a union of blocks of $\pi$ and such that $\min_{\prec_\chi}(V_k)$ and $\max_{\prec_\chi}(V_k)$ are in the same block of $\pi$ for each $k \in \{1, \dots, m\}$.   Furthermore, by reordering if necessary, we may assume $\max_{\prec_\chi}(V_k) \prec_\chi \min_{\prec_\chi} (V_{k+1})$ for all $k$. For example, if $\pi$ has the following bi-non-crossing diagram
\begin{align*}
\begin{tikzpicture}[baseline]
\node[left] at (0,5.5) {1};
\draw[black,fill=black] (0,5.5) circle (0.05);
\node[left] at (0,5) {2};
\draw[black,fill=black] (0,5) circle (0.05);
\node[right] at (3.2,4.5) {3};
\draw[black,fill=black] (3.2,4.5) circle (0.05);
\node[left] at (0,4) {4};
\draw[black,fill=black] (0,4) circle (0.05);
\node[right] at (3.2,3.5) {5};
\draw[black,fill=black] (3.2,3.5) circle (0.05);
\node[left] at (0,3) {6};
\draw[black,fill=black] (0,3) circle (0.05);
\node[left] at (0,2.5) {7};
\draw[black,fill=black] (0,2.5) circle (0.05);
\node[right] at (3.2,2) {8};
\draw[black,fill=black] (3.2,2) circle (0.05);
\node[left] at (0,1.5) {9};
\draw[black,fill=black] (0,1.5) circle (0.05);
\node[right] at (3.2,1) {10};
\draw[black,fill=black] (3.2,1) circle (0.05);
\node[right] at (3.2,0.5) {11};
\draw[black,fill=black] (3.2,0.5) circle (0.05);
\node[right] at (3.2,0) {12};
\draw[black,fill=black] (3.2,0) circle (0.05);
\draw[thick, black] (0,5.5) -- (1.6,5.5) -- (1.6,3) -- (0,3);
\draw[thick, black] (0,5) -- (0.8,5) -- (0.8,4) -- (0,4);
\draw[thick, black] (3.2,4.5) -- (2.4,4.5) -- (2.4,2) -- (3.2,2);
\draw[thick, black] (2.4,2) -- (2.4,1) -- (3.2,1);
\draw[thick, black] (0,2.5) -- (1.6,2.5) -- (1.6,0.5) -- (3.2,0.5);
\draw[thick, black] (0,1.5) -- (0.8,1.5) -- (0.8,0) -- (3.2,0);
\draw[thick, dashed, black] (0,6) -- (0,-.5) -- (3.2, -.50) -- (3.2,6);
\end{tikzpicture}
\end{align*}
then $V_1 = \{\{1, 6\}, \{2, 4\}\}$, $V_2 = \{\{7, 11\}, \{9, 12\}\}$, and $V_3 = \{\{3, 8, 10\}, \{5\}\}$ where $\min_{\prec_\chi}(V_1)=1$, $\max_{\prec_\chi}(V_1) = 6$, $\min_{\prec_\chi}(V_2)= 7$, $\max_{\prec_\chi}(V_2)=11$, $\min_{\prec_\chi}(V_3) = 10$, and $\max_{\prec_\chi}(V_3)=3$. Note that the blocks $V_k' \subset V_k$ containing $\min_{\prec_\chi}(V_k)$ and $\max_{\prec_\chi}(V_k)$ are the exterior blocks of $\pi$.
\end{rem}

\begin{defn}\label{CondBiMulti}
Let $(\A, \mathbb{E}, \mathbb{F}, \varepsilon)$ be a $\mathcal{B}$-$\mathcal{B}$-non-commutative probability space with a pair of $(\B, \D)$-valued expectations, and let
\[\Psi: \bigcup_{n \geq 1}\bigcup_{\chi: \{1, \dots, n\} \to \{\ell, r\}}\B\N\C(\chi) \times \A_{\chi(1)} \times \cdots \times \A_{\chi(n)} \to \B\]
and
\[\Phi: \bigcup_{n \geq 1}\bigcup_{\chi: \{1, \dots, n\} \to \{\ell, r\}}\B\N\C(\chi) \times \A_{\chi(1)} \times \cdots \times \A_{\chi(n)} \to \D\]
be a pair of functions that are linear in each $\A_{\chi(k)}$. It is said that $(\Psi, \Phi)$ is an \textit{operator-valued conditionally bi-multiplicative pair} if for every $\chi: \{1, \dots, n\} \to \{\ell, r\}$, $Z_k \in \A_{\chi(k)}$, $b \in \B$, and $\pi \in \B\N\C(\chi)$, $\Psi$ satisfies conditions $(1)$ to $(4)$ of Definition \ref{BiMulti} (i.e., $\Psi$ is operator-valued bi-multiplicative), and $\Phi$ satisfies conditions $(1)$ to $(3)$ of Definition \ref{BiMulti} and the following modification of condition $(4)$: Under the same notation with the additional assumption that $\min_{\prec_\chi}(\{1, \dots, n\})$ and $\max_{\prec_\chi}(\{1, \dots, n\})$ are in the same block of $\pi$, we have
\begin{align*}
\Phi_\pi(Z_1, \dots, Z_n) &= \begin{cases}
\Phi_{\pi|_{W}}\left(\left(Z_1, \dots, Z_{p - 1}, Z_pL_{\Psi_{\pi|_{V}}\left((Z_1, \dots, Z_n)|_{V}\right)}, Z_{p + 1}, \dots, Z_n\right)|_{W}\right) &\text{if } \chi(p) = \ell\\
\Phi_{\pi|_{W}}\left(\left(Z_1, \dots, Z_{p - 1}, R_{\Psi_{\pi|_{V}}\left((Z_1, \dots, Z_n)|_{V}\right)}Z_p, Z_{p + 1}, \dots, Z_n\right)|_{W}\right) &\text{if } \chi(p) = r
\end{cases}\\
&= \begin{cases}
\Phi_{\pi|_{W}}\left(\left(Z_1, \dots, Z_{q - 1}, L_{\Psi_{\pi|_{V}}\left((Z_1, \dots, Z_n)|_{V}\right)}Z_q, Z_{q + 1}, \dots, Z_n\right)|_{W}\right) &\text{if } \chi(q) = \ell\\
\Phi_{\pi|_{W}}\left(\left(Z_1, \dots, Z_{q - 1}, Z_qR_{\Psi_{\pi|_{V}}\left((Z_1, \dots, Z_n)|_{V}\right)}, Z_{q + 1}, \dots, Z_n\right)|_{W}\right) &\text{if } \chi(q) = r
\end{cases}.
\end{align*}
\end{defn}

Note the additional assumption that $\min_{\prec_\chi}(\{1, \dots, n\})$ and $\max_{\prec_\chi}(\{1, \dots, n\})$ are in the same block of $\pi$ guarantees that $W$ contains an exterior block of $\pi$ and $V$ is a union of interior blocks of $\pi$.

\begin{exam}
As with operator-valued bi-multiplicative functions, one may reduce $\Phi_\pi(Z_1, \dots, Z_n)$ to an expression involving $\Psi_{1_\chi}$ and $\Phi_{1_\chi}$ for various $\chi: \{1, \dots, m\} \to \{\ell, r\}$. For example, if $\pi$ is the bi-non-crossing partition from Remark \ref{Decomposition} and $Z_k \in \A_{\chi(k)}$, then
\[\Phi_\pi(Z_1, \dots, Z_{12}) = \Phi_{\pi|_{V_1}}(Z_1, Z_2, Z_4, Z_6)\Phi_{\pi|_{V_2}}(Z_7, Z_9, Z_{11}, Z_{12})\Phi_{\pi|_{V_3}}(Z_3, Z_5, Z_8, Z_{10})\]
by condition $(3)$ of Definition \ref{BiMulti}, which can be further reduced to
\[\Phi_{1_{2, 0}}\left(Z_1L_{\Psi_{1_{2, 0}}(Z_2, Z_4)}, Z_6\right)\Phi_{1_{1, 1}}\left(Z_7L_{\Psi_{1_{1, 1}}(Z_9, Z_{12})}, Z_{11}\right)\Phi_{1_{0, 3}}\left(Z_3, R_{\Psi_{1_{0, 1}}(Z_5)}Z_8, Z_{10}\right)\]
by the modified condition $(4)$ of Definition \ref{CondBiMulti}.
\end{exam}

\subsection{Operator-valued conditionally bi-free moment pairs}

In this subsection, we define the operator-valued conditionally bi-free moment pair $(\E, \F)$ and show that it is operator-valued conditionally bi-multiplicative.

\begin{defn}\label{CBFMomentPair}
Let $(\A, \mathbb{E}, \mathbb{F}, \varepsilon)$ be a $\mathcal{B}$-$\mathcal{B}$-non-commutative probability space with a pair of $(\B, \D)$-valued expectations. The \textit{operator-valued conditionally bi-free moment pair} on $\A$ is the pair of functions
\[\E: \bigcup_{n \geq 1}\bigcup_{\chi: \{1, \dots, n\} \to \{\ell, r\}}\B\N\C(\chi) \times \A_{\chi(1)} \times \cdots \times \A_{\chi(n)} \to \B\]
and
\[\F: \bigcup_{n \geq 1}\bigcup_{\chi: \{1, \dots, n\} \to \{\ell, r\}}\B\N\C(\chi) \times \A_{\chi(1)} \times \cdots \times \A_{\chi(n)} \to \D\]
where $\E$ is the operator-valued bi-free moment function on $\A$ and $\F_\pi(Z_1, \dots, Z_n)$ for $\chi: \{1, \dots, n\} \to \{\ell, r\}$, $\pi \in \B\N\C(\chi)$, and $Z_k \in \A_{\chi(k)}$ is defined as follows.
\begin{enumerate}[$\qquad(1)$]
\item If $\pi$ contains exactly one block (that is, $\pi = 1_\chi$), define $\F_{1_\chi}(Z_1, \dots, Z_n) = \bF(Z_1\cdots Z_n)$.

\item If $V_1, \ldots, V_n$ are the blocks of $\pi$, each $V_k$ is a $\chi$-intervals (thus all exterior), and $\max_{\prec_\chi}(V_k) \prec_\chi \min_{\prec_\chi}(V_{k+1})$ for all $k$, define
\[\F_\pi(Z_1, \dots, Z_n) = \F_{\pi|_{V_1}}((Z_1, \dots, Z_n)|_{V_1})\cdots \F_{\pi|_{V_m}}((Z_1, \dots, Z_n)|_{V_m})\]
and apply step $(3)$ to each piece.

\item Apply a similar recursive process as in Definition \ref{E-pi} to the interior blocks of $\pi$ as follows:  Let $V$ be the interior block of $\pi$ that terminates closest to the bottom.  Then
\begin{itemize}
\item If $V = \{k + 1, \dots, n\}$ for some $k \in \{1, \dots, n - 1\}$, then $\min(V)$ is not adjacent to any spine of $\pi$ and define
\[\F_\pi(Z_1, \dots, Z_n) = \begin{cases}
\F_{\pi|_{V^\complement}}(Z_1, \dots, Z_kL_{\E_{\pi|_V}(Z_{k + 1}, \dots, Z_n)}) &\text{if } \chi(\min(V)) = \ell\\
\F_{\pi|_{V^\complement}}(Z_1, \dots, Z_kR_{\E_{\pi|_V}(Z_{k + 1}, \dots, Z_n)}) &\text{if } \chi(\min(V)) = r\\
\end{cases}.\]

\item Otherwise, $\min(V)$ is adjacent to a spine. Let $W$ denote the block of $\pi$ corresponding to the spine adjacent to $\min(V)$ and let $k$ be the smallest element of $W$ that is larger than $\min(V)$.  Define
\[\F_\pi(Z_1, \dots, Z_n) = \begin{cases}
\F_{\pi|_{V^\complement}}((Z_1, \dots, Z_{k - 1}, L_{\E_{\pi|_V}((Z_1, \dots, Z_n)|_V)}Z_k, Z_{k + 1}, \dots, Z_n)|_{V^\complement}) &\text{if } \chi(\min(V)) = \ell\\
\F_{\pi|_{V^\complement}}((Z_1, \dots, Z_{k - 1}, R_{\E_{\pi|_V}((Z_1, \dots, Z_n)|_V)}Z_k, Z_{k + 1}, \dots, Z_n)|_{V^\complement}) &\text{if } \chi(\min(V)) = r\\
\end{cases}.\]
\end{itemize}
\end{enumerate}
\end{defn}

\begin{exam}
Again, let $\pi$ be the bi-non-crossing partition from Remark \ref{Decomposition} and $Z_k \in \A_{\chi(k)}$.  Then
\[\F_\pi(Z_1, \dots, Z_{12}) = \bF\left(Z_1L_{\bE(Z_2Z_4)}Z_6\right)\bF\left(Z_7L_{\bE(Z_9Z_{12})}Z_{11}\right)\bF\left(Z_3R_{\bE(Z_5)}Z_8Z_{10}\right).\]
In general, the rule is `one uses $\E$ to reduce the interior blocks and then factors $\F_\pi$ according to the remaining exterior blocks.'
\end{exam}

\begin{thm}
Let $(\A, \mathbb{E}, \mathbb{F}, \varepsilon)$ be a $\mathcal{B}$-$\mathcal{B}$-non-commutative probability space with a pair of $(\B, \D)$-valued expectations. The operator-valued conditionally bi-free moment pair $(\E, \F)$ on $\A$ is operator-valued conditionally bi-multiplicative.
\end{thm}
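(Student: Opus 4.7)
Since $\E$ is already known to be operator-valued bi-multiplicative by \cite{CNS2015-2}*{Theorem 6.2.1}, the task reduces to verifying that $\F$ satisfies conditions $(1)$, $(2)$, and $(3)$ of Definition \ref{BiMulti}, as well as the modified version of condition $(4)$ from Definition \ref{CondBiMulti}. The plan is to argue by induction on $n$ (the number of arguments) and, for fixed $n$, on the number of blocks of $\pi$, using the recursive structure of $\F$ in Definition \ref{CBFMomentPair}.

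First, for the base case $\pi = 1_\chi$, one has $\F_{1_\chi}(Z_1, \dots, Z_n) = \bF(Z_1\cdots Z_n)$, so conditions $(1)$ and $(2)$ amount to verifying that the equalities $\bF(ZL_b) = \bF(ZR_b)$ and $\bF(L_{b_1}R_{b_2}Z) = b_1 \bF(Z) b_2$ from Definition \ref{BBncpsBD}, combined with the commutation of $L_b$ (resp.\ $R_b$) with elements of $\A_r$ (resp.\ $\A_\ell$), allow a left or right $\B$-operator attached to one argument to be moved to the appropriate adjacent argument of the same $\chi$-type. This is carried out exactly as in the corresponding verification for $\E_{1_\chi} = \bE \circ m$ in \cite{CNS2015-2}*{Section 6}. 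The inductive step for $(1)$ and $(2)$ for a general $\pi$ is obtained by applying the recursion in Definition \ref{CBFMomentPair}: reducing the innermost interior block via $\E$ brings the problem down to a $\pi' = \pi|_{V^\complement}$ with one fewer block, to which the inductive hypothesis applies together with the bi-multiplicativity of $\E$ for reassociating the inserted $\B$-operators.

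Next, condition $(3)$ for $\F$ is essentially built into the definition. Given a decomposition of $\{1, \dots, n\}$ into $\chi$-intervals $V_1, \dots, V_m$, each a union of blocks of $\pi$, the key observation is that a block $U \subset V_k$ is interior in $\pi$ if and only if it is interior in $\pi|_{V_k}$, because any block $U'$ of $\pi$ witnessing interiority must intersect $V_k$ and hence, as $V_k$ is a union of blocks and a $\chi$-interval and $\pi$ is bi-non-crossing, be contained in $V_k$. Consequently the exterior decomposition of $\pi$ from Remark \ref{Decomposition} is a refinement of $V_1, \dots, V_m$, and applying step $(2)$ of Definition \ref{CBFMomentPair} followed by regrouping yields $\F_\pi(Z_1, \dots, Z_n) = \prod_{k=1}^m \F_{\pi|_{V_k}}((Z_1, \dots, Z_n)|_{V_k})$.

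Finally, for the modified condition $(4)$, the assumption that $\min_{\prec_\chi}(\{1, \dots, n\})$ and $\max_{\prec_\chi}(\{1, \dots, n\})$ lie in the same block of $\pi$ forces this block to be the unique exterior block (any other exterior block would be nested inside it, contradiction), so the exterior decomposition of $\pi$ has only one piece. Consequently the evaluation of $\F_\pi$ proceeds entirely through step $(3)$ of Definition \ref{CBFMomentPair}, reducing interior blocks via $\E$ until only the exterior block remains, then applying $\bF$. Since $V$ is, by the argument in the previous paragraph, a union of interior blocks of $\pi$, one may reduce the blocks lying in $V$ first; the output of this partial reduction is exactly $\E_{\pi|_V}((Z_1, \dots, Z_n)|_V)$, inserted as $L$-operator or $R$-operator at the position dictated by the adjacency of $V$ to the exterior block. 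The main obstacle is justifying that reducing the blocks in $V$ first yields the same value of $\F_\pi$ as the recursion in Definition \ref{CBFMomentPair}; this is the operator-valued analogue of a block-commutation lemma, and is proved by induction on the number of blocks in $V$, using the bi-multiplicativity of $\E$ (specifically, the fact that $\E$ satisfies its own condition $(4)$, which allows interior sub-blocks of $V$ to be absorbed into $\E_{\pi|_V}$ in any order) together with the reduction rules $(1)$ and $(2)$ for $\F$ already established. The verification that the resulting insertion may be placed at either adjacent position $p$ or $q$, as required by condition $(4)$, is precisely conditions $(1)$ and $(2)$ for $\F$ applied to the resulting expression.
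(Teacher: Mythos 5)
Your proposal is correct and follows essentially the same route as the paper's proof: invoke the bi-multiplicativity of $\E$ from \cite{CNS2015-2} (note the moment function is treated in Section 5 there, not Section 6), observe that conditions $(1)$--$(3)$ for $\F$ follow directly from the recursive definition and the defining properties of $\bF$, and reduce the modified condition $(4)$ to the order-of-reduction techniques of \cite{CNS2015-2}*{Subsection 5.3}. The only place where the paper is more explicit than your sketch is the general case of condition $(4)$, where the $\prec_\chi$-neighbours $p,q$ of $V$ inside $W$ need not lie in the exterior block $W_0$: the paper first establishes the special case $p,q\in W_0$ and then reduces to it by enlarging $V$ to the $\chi$-interval $U$ bounded by the elements $\alpha,\beta$ of $W_0$ nearest to $p$ and $q$, splitting $\E_{\pi|_U}$ into the contributions of $V$ and $U\setminus V$ via the bi-multiplicativity of $\E$ --- this is precisely the content your ``block-commutation lemma'' would need to supply, since your phrase ``the position dictated by the adjacency of $V$ to the exterior block'' implicitly assumes the special case.
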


\begin{proof}
The fact that the operator-valued bi-free moment function $\E$ on $\A$ is operator-valued bi-multiplicative is the main result of \cite{CNS2015-2}*{Section 5}. It is also clear that the function $\F$ satisfies conditions $(1)$, $(2)$, and $(3)$ of Definition \ref{BiMulti}.

To demonstrate the modified condition $(4)$ in Definition \ref{CondBiMulti}, the proof relies on the techniques observed in \cite{CNS2015-2}*{Subsection 5.3} which show that the function $\E$ satisfies condition $(4)$ of Definition \ref{BiMulti}.  In particular, we refer the reader to the proofs of \cite{CNS2015-2}*{Lemmata 5.3.1 to 5.3.4} for additional details in that which follows. Under the same assumptions and notation, first note that the special case of the assertion holds under the additional assumption of \cite{CNS2015-2}*{Lemma 5.3.1}; that is, there exists a block $W_0 \subset W$ of $\pi$ such that
\[p, q, \min_{\prec_\chi}(\{1, \dots, n\}), \max_{\prec_\chi}(\{1, \dots, n\}) \in W_0.\]
Indeed, suppose $\chi(p) = \ell$ (the other case is similar), and note that $W_0$ is the only exterior block of $\pi$. By the same arguments as in the proof of \cite{CNS2015-2}*{Lemma 5.3.1}, we have
\[\F_\pi(Z_1, \dots, Z_n) = \F_{\pi|_{W}}\left(\left(Z_1, \dots, Z_{p - 1}, Z_pL_{\E_{\pi|_{V}}\left((Z_1, \dots, Z_n)|_{V}\right)}, Z_{p + 1}, \dots, Z_n\right)|_{W}\right)\]
for all three possible cases, i.e., $\chi(q) = \ell$; $\chi(q) = r$ and $p < q$; $\chi(q) = r$ and $p > q$.

To verify the modified condition $(4)$ in full generality, we examine the proof of \cite{CNS2015-2}*{Lemma 5.3.4}. Suppose $\chi(p) = \ell$ (the other case is similar), and note that under the additional assumption of the modified condition $(4)$ that there exists a block $W_0 \subset W$ of $\pi$ such that $\min_{\prec_\chi}(\{1, \dots, n\}), \max_{\prec_\chi}(\{1, \dots, n\}) \in W_0$, the block $W_0$ is always the only exterior block of $\pi$. Let
\[
\alpha = \max_{\prec_\chi}\left(\left\{k \in W_0 \, \mid \, k \preceq_\chi p\right\}\right), \quad \beta = \min_{\prec_\chi}\left(\left\{k \in W_0 \, \mid \, q \preceq_\chi k\right\}\right),
\]
and let $U = \{k \, \mid \, \alpha \prec_\chi k \prec_\chi \beta\}$.  Thus $U$ is a union of blocks of $\pi$. Let $\overline{W_0} = U^{\complement}$. Then, by the special case above (with $U$ being the $\chi$-interval), we have
\[\F_\pi(Z_1, \dots, Z_n) = \F_{\pi|_{\overline{W_0}}}\left(\left(Z_1, \dots, Z_{\alpha - 1}, Z_\alpha L_{\E_{\pi|_{U}}\left((Z_1, \dots, Z_n)|_{U}\right)}, Z_{\alpha + 1}, \dots, Z_n\right)|_{\overline{W_0}}\right).\]
Since $\E$ is operator-valued bi-multiplicative, we have
\[Z_\alpha L_{\E_{\pi|_{U}}\left((Z_1, \dots, Z_n)|_{U}\right)} = Z_pL_{\E_{\pi|_{V}}\left((Z_1, \dots, Z_n)|_{V}\right)}L_{\E_{\pi|_{U \setminus V}}\left((Z_1, \dots, Z_n)|_{U \setminus V}\right)}\]
if $\alpha = p$, and
\[\E_{\pi|_{U}}\left((Z_1, \dots, Z_n)|_{U}\right) = \E_{\pi|_{U \setminus V}}\left(\left(Z_1, \dots, Z_{p - 1}, Z_pL_{\E_{\pi|_{V}}\left((Z_1, \dots, Z_n)|_{V}\right)}, Z_{p + 1}, \dots, Z_n\right)|_{U \setminus V}\right)\]
otherwise. Since $W = \overline{W_0} \cup (U \setminus V)$, the assertion follows from applying the special case above in the opposite direction.
\end{proof}

\subsection{Operator-valued conditionally bi-free cumulant pairs}

In this subsection, we recursively define the operator-valued conditionally bi-free cumulant pair $(\kappa, \mathcal{K})$ using the pair $(\E, \F)$ from the previous subsection and show that it is also operator-valued conditionally bi-multiplicative.

\begin{defn}\label{OpVCBFCumulants}
Let $(\A, \mathbb{E}, \mathbb{F}, \varepsilon)$ be a $\mathcal{B}$-$\mathcal{B}$-non-commutative probability space with a pair of $(\B, \D)$-valued expectations and let $(\E, \F)$ be the operator-valued conditionally bi-free moment pair on $\A$. The \textit{operator-valued conditionally bi-free cumulant pair} on $\A$ is the pair of functions
\[\kappa: \bigcup_{n \geq 1}\bigcup_{\chi: \{1, \dots, n\} \to \{\ell, r\}}\B\N\C(\chi) \times \A_{\chi(1)} \times \cdots \times \A_{\chi(n)} \to \B\]
and
\[\K: \bigcup_{n \geq 1}\bigcup_{\chi: \{1, \dots, n\} \to \{\ell, r\}}\B\N\C(\chi) \times \A_{\chi(1)} \times \cdots \times \A_{\chi(n)} \to \D\]
where $\kappa$ is the operator-valued bi-free cumulant function on $\A$ and $\K$ is recursively defined as follows.
\begin{enumerate}[$\qquad(1)$]
\item If $n = 1$, then $\K_{1_{1,0}}(Z_\ell) = \F_{1_{1,0}}(Z_\ell)$ for $Z_\ell \in \A_\ell$ and $\K_{1_{0,1}}(Z_r) = \F_{1_{0,1}}(Z_r)$ for $Z_r \in \A_r$.

\item Fix $n \geq 2$, $\chi: \{1, \dots, n\} \to \{\ell, r\}$, $\pi \in \B\N\C(\chi)$, and $Z_k \in \A_{\chi(k)}$. If $\pi \neq 1_\chi$, then let $V_1, \dots, V_m$ be the partition of $\pi$ as described in Remark \ref{Decomposition}. We define
\[\K_\pi(Z_1, \dots, Z_n) = \K_{\pi|_{V_1}}((Z_1, \dots, Z_n)|_{V_1})\cdots \K_{\pi|_{V_m}}((Z_1, \dots, Z_n)|_{V_m}),\]
where each $\K_{\pi|_{V_k}}((Z_1, \dots, Z_n)|_{V_k})$ is defined as follows. Let $V'_k \subset V_k$ be the block containing $\min_{\prec_\chi}(V_k)$ and $\max_{\prec_\chi}(V_k)$, let $V \subset V_k \setminus V_k'$ be the block which terminates closest to the bottom (compared to other blocks of $V_k$).  If $p = \max_{\prec_\chi}\left(\left\{j \in V_k \, \mid \, j \prec_\chi \min_{\prec_\chi}(V)\right\}\right)$ define
\begin{align*}
&\K_{\pi|_{V_k}}((Z_1, \dots, Z_n)|_{V_k})\\
&= \begin{cases}
\K_{\pi|_{V_k \setminus V}}\left(\left(Z_1, \dots, Z_{p - 1}, Z_pL_{\kappa_{\pi|_{V}}\left((Z_1, \dots, Z_n)|_{V}\right)}, Z_{p + 1}, \dots, Z_n\right)|_{V_k \setminus V}\right) &\text{if } \chi(p) = \ell\\
\K_{\pi|_{V_k \setminus V}}\left(\left(Z_1, \dots, Z_{p - 1}, R_{\kappa_{\pi|_{V}}\left((Z_1, \dots, Z_n)|_{V}\right)}Z_p, Z_{p + 1}, \dots, Z_n\right)|_{V_k \setminus V}\right) &\text{if } \chi(p) = r
\end{cases}.
\end{align*}
Repeat this process until the only remaining block of $V_k$ is $V'_k$. 

\item Otherwise $\pi = 1_\chi$ and define
\[\K_{1_\chi}(Z_1, \dots, Z_n) = \F_{1_\chi}(Z_1, \dots, Z_n) - \sum_{\substack{\pi \in \B\N\C(\chi)\\\pi \neq 1_\chi}}\K_\pi(Z_1, \dots, Z_n).\]
\end{enumerate}
\end{defn}

\begin{thm}
Let $(\A, \mathbb{E}, \mathbb{F}, \varepsilon)$ be a $\mathcal{B}$-$\mathcal{B}$-non-commutative probability space with a pair of $(\B, \D)$-valued expectations. The operator-valued conditionally bi-free cumulant pair $(\kappa, \K)$ on $\A$ is operator-valued conditionally bi-multiplicative.
\end{thm}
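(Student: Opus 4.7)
The strategy parallels the verification that $\kappa$ is operator-valued bi-multiplicative carried out in \cite{CNS2015-2}*{Section 6}, now with the added ingredient that $\F$ has already been established to form part of an operator-valued conditionally bi-multiplicative pair. Since $\kappa$ satisfies all four conditions of Definition \ref{BiMulti}, only the function $\K$ requires verification: one must check that $\K$ satisfies conditions $(1)$, $(2)$, and $(3)$ of Definition \ref{BiMulti} together with the modified condition $(4)$ of Definition \ref{CondBiMulti}.

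Conditions $(3)$ and the modified $(4)$ are essentially forced by the recursive definition. The factorisation of $\K_\pi$ over the $\chi$-interval decomposition of Remark \ref{Decomposition} prescribed in step $(2)$ of Definition \ref{OpVCBFCumulants} is exactly condition $(3)$. For the modified condition $(4)$, one restricts attention to partitions $\pi$ in which $\min_{\prec_\chi}(\{1,\dots,n\})$ and $\max_{\prec_\chi}(\{1,\dots,n\})$ lie in a common block, so the decomposition of Remark \ref{Decomposition} produces the single piece $V_1 = \{1,\dots,n\}$. The successive absorption of interior blocks into adjacent $\B$-operators prescribed in Definition \ref{OpVCBFCumulants} then matches the single-step absorption of a full $\chi$-interval $V$ of interior blocks demanded by the modified condition $(4)$, once one appeals to bi-multiplicativity of $\kappa$ so that iterated absorption agrees with single absorption.

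The substantive work lies in conditions $(1)$ and $(2)$ for $\K_{1_\chi}$, which I would prove by induction on $n$. The base case $n=1$ is immediate from $\K_{1_{1,0}}=\F_{1_{1,0}}$ and $\K_{1_{0,1}}=\F_{1_{0,1}}$. For the inductive step, I would rewrite step $(3)$ of Definition \ref{OpVCBFCumulants} as
\[
\K_{1_\chi}(Z_1,\dots,Z_n) \;=\; \F_{1_\chi}(Z_1,\dots,Z_n) \;-\; \sum_{\pi \in \B\N\C(\chi),\, \pi \neq 1_\chi} \K_\pi(Z_1,\dots,Z_n),
\]
apply the left/right $\B$-operator operation appearing in condition $(1)$ or $(2)$ to both sides, and check that both sides transform identically. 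The $\F_{1_\chi}$ term transforms as required because $\F$ is operator-valued conditionally bi-multiplicative by the previous theorem. Each $\K_\pi$ with $\pi \neq 1_\chi$ decomposes, by the discussion above, into a product of factors of the form $\K_{1_{\chi'}}$ (with interior blocks absorbed via $\kappa$) over strictly smaller index sets, so the inductive hypothesis applied to the affected factor, together with bi-multiplicativity of $\kappa$, forces the correct transformation rule. Conditions $(1)$ and $(2)$ for general $\pi$ follow from the factorisation of $\K_\pi$ together with the case $\pi = 1_{\chi'}$.

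The main obstacle will be the case analysis in the inductive step: for each $\pi \neq 1_\chi$ occurring in the sum, one must correctly identify which factor of $\K_\pi$ contains the position being acted upon, compute the relevant auxiliary index $q$ (or verify $q=-\infty$), and check that the net effect over the sum reproduces the rule prescribed for $\K_{1_\chi}$. The bookkeeping is the same in flavour as in \cite{CNS2015-2}*{Section 6} and \cite{GS2016}*{Section 3}; the only new ingredient is verifying that the interaction of the $\B$-operator with $\F_{1_\chi}$ on the $1_\chi$ term is compatible with its interaction with $\kappa$ on the smaller sub-pieces of $\K_\pi$, and no genuinely new combinatorial difficulty arises beyond those already handled in the bi-free setting.
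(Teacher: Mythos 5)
Your proposal is correct and follows essentially the same route as the paper: cite bi-multiplicativity of $\kappa$, observe that condition $(3)$ is immediate from the factorisation in Definition \ref{OpVCBFCumulants}, prove conditions $(1)$ and $(2)$ by induction on $n$ via the recursion $\K_{1_\chi} = \F_{1_\chi} - \sum_{\pi \neq 1_\chi}\K_\pi$ (with the case analysis on how the affected position sits in the blocks of $\pi$, which the paper also only sketches), and deduce the modified condition $(4)$ from the strengthened conditions $(1)$--$(2)$ together with bi-multiplicativity of $\kappa$. The only organisational difference is that the paper builds the general-$\pi$ versions of $(1)$ and $(2)$ directly into the induction hypothesis, whereas you derive them from the $1_{\chi'}$ case via factorisation, which is equivalent.
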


\begin{proof}
The fact that the operator-valued bi-free cumulant function $\kappa$ on $\A$ is operator-valued bi-multiplicative was proved in \cite{CNS2015-2}*{Section 6}. Moreover, it is easy to see that the function $\K$ satisfies condition $(3)$ of Definition \ref{BiMulti}.

For condition $(1)$ of Definition \ref{BiMulti} we will proceed by induction on $n$ to show that condition $(1)$ holds in greater generality.  To be specific, we will demonstrate that condition $(1)$ holds whenever $1_\chi$ is replaced with $\pi$.  To proceed, note the base case where $n = 1$ is trivial.  For the inductive step, suppose the assertion holds for all $1 \leq n_0 \leq n - 1$, $\chi_0: \{1, \dots, n_0\} \to \{\ell, r\}$, and $\pi_0 \in \B\N\C(\chi_0)$.  Suppose $\chi : \{1,\ldots, n\} \to \{\ell, r\}$ and that $\chi(n) = \ell$ (as the other case is similar).   If $q = -\infty$, then $\chi: \{1, \dots, n\} \to \{\ell\}$ is the constant map, and thus for each $\pi \in \B\N\C(\chi)$, $n$ necessarily belongs to an exterior block of $\pi$. Since $\K_\pi(Z_1, \dots, Z_{n - 1}, Z_nL_b)$ factors according to the exterior blocks of $\pi$, we have $\K_\pi(Z_1, \dots, Z_{n - 1}, Z_nL_b) = \K_\pi(Z_1, \dots, Z_n)b$ if $\pi \neq 1_\chi$ by the induction hypothesis.  Thus
\begin{align*}
\K_{1_\chi}(Z_1, \dots, Z_{n - 1}, Z_nL_b) &= \F_{1_\chi}(Z_1, \dots, Z_{n - 1}, Z_nL_b) - \sum_{\substack{\pi \in \B\N\C(\chi)\\\pi \neq 1_\chi}}\K_\pi(Z_1, \dots, Z_{n - 1}, Z_nL_b)\\
&= \F_{1_\chi}(Z_1, \dots, Z_n)b - \sum_{\substack{\pi \in \B\N\C(\chi)\\\pi \neq 1_\chi}}\K_\pi(Z_1, \dots, Z_n)b\\
&= \K_{1_\chi}(Z_1, \dots, Z_n)b.
\end{align*}

If $q \neq -\infty$ and $\pi \in \B\N\C(\chi)$ such that $\pi \neq 1_\chi$, then as $n$ and $q$ are adjacent with respect to $\prec_\chi$, we have the following possible cases:
\begin{enumerate}[$\qquad(i)$]
\item $n, q \in V$ such that $V$ is an interior block of $\pi$;

\item $n, q \in V$ such that $V$ is an exterior block of $\pi$;

\item $n \in V_1$ and $q \in V_2$ such that $n= \max_{\prec_\chi}(V_1) \prec_\chi \min_{\prec_\chi}(V_2) = q$, and both of $V_1$ and $V_2$ are interior blocks of $\pi$;

\item $n \in V_1$ and $q \in V_2$ such that $n= \max_{\prec_\chi}(V_1) \prec_\chi \min_{\prec_\chi}(V_2)= q$, and both of $V_1$ and $V_2$ are exterior blocks of $\pi$;

\item $n \in V_1$ and $q \in V_2$ such that $\min_{\prec_\chi}(V_2) \prec_\chi \min_{\prec_\chi}(V_1)$ (thus $V_1$ is interior with respect to $V_2$), and $V_2$ is an interior block of $\pi$;

\item $n \in V_1$ and $q \in V_2$ such that $\min_{\prec_\chi}(V_2) \prec_\chi \min_{\prec_\chi}(V_1)$ (thus $V_1$ is interior with respect to $V_2$), and $V_2$ is an exterior block of $\pi$;

\item $n \in V_1$ and $q \in V_2$ such that $\max_{\prec_\chi}(V_2) \prec_\chi \max_{\prec_\chi}(V_1)$ (thus $V_2$ is interior with respect to $V_1$), and $V_1$ is an interior block of $\pi$;

\item $n \in V_1$ and $q \in V_2$ such that $\max_{\prec_\chi}(V_2) \prec_\chi \max_{\prec_\chi}(V_1)$ (thus $V_2$ is interior with respect to $V_1$), and $V_1$ is an exterior block of $\pi$.
\end{enumerate}
Since $\pi \neq 1_\chi$, cases $(i)$, $(ii)$, $(iii)$, $(v)$, $(vi)$, $(vii)$, and $(viii)$ follow from the induction hypothesis and from the fact that $\kappa$ is operator-valued bi-multiplicative. For case $(iv)$, $V_1 \subset \chi^{-1}(\{\ell\})$ and $V_2 \subset \chi^{-1}(\{r\})$, so the result follows from the $q = -\infty$ situation (and the proof where $\chi(n) = r$ which must be run simultaneously with induction). Therefore, we have
\[\K_\pi(Z_1, \dots, Z_{n - 1}, Z_nL_b) = \K_\pi(Z_1, \dots, Z_{q - 1}, Z_qR_b, Z_{q + 1}, \dots, Z_n)\]
for all $\pi \neq 1_\chi$, and hence
\[
\K_{1_\chi}(Z_1, \dots, Z_{n - 1}, Z_nL_b) = \K_{1_\chi}(Z_1, \dots, Z_{q - 1}, Z_qR_b, Z_{q + 1}, \dots, Z_n)
\]
by the same calculation as the $q = -\infty$ situation.

The verification for condition $(2)$ of Definition \ref{BiMulti} follows from essentially the same induction arguments and casework as above with $p$ replacing $n$. The only difference is that if $q = -\infty$, then $p$ is the smallest element with respect to $\prec_\chi$, and hence necessarily belongs to an exterior block of $\pi$. Note this shows that the function $\K$ actually satisfies the additional properties that conditions $(1)$ and $(2)$ of Definition \ref{BiMulti} hold for all $\pi \in \B\N\C(\chi)$.

Finally, for the modified condition $(4)$ as given in Definition \ref{CondBiMulti}, the result follows from the extended conditions $(1)$ and $(2)$ of Definition \ref{BiMulti} as stated above along with the recursive definition in Definition \ref{CBFMomentPair} and the fact that $\kappa$ is operator-valued bi-multiplicative.
\end{proof}

\section{Universal moment expressions for c-bi-free independence with amalgamation}\label{sec:moment-express}

In this section, we will demonstrate that a family of pairs of $\B$-algebras is c-bi-free over $(\B, \D)$ if and only if certain operator-valued moment expressions hold. To do so, we note that the shaded diagrams from Definition \ref{ShadedDiagrams} and \cite{CNS2015-2}*{Lemma 7.1.3}   will be useful.

\begin{defn}
Let $\{(\X_k, \X_k^\circ, \mathfrak{p}_k, \mathfrak{q}_k)\}_{k \in K}$ be a family of $\B$-$\B$-bimodules with pairs of specified $(\B, \D)$-valued states, let $\lambda_k$ and $\rho_k$ be the left and right representations of $\L(\X_k)$ on $\L(\X)$, and let $\X = (*_\B)_{k \in K}\X_k$. Fix $\chi: \{1, \dots, n\} \to \{\ell, r\}$, $\omega: \{1, \dots, n\} \to K$, $Z_k \in \L_{\chi(k)}(\X_{\omega(k)})$, and let $\mu_k(Z_k) = \lambda_{\omega(k)}(Z_k)$ if $\chi(k) = \ell$ and $\mu_k(Z_k) = \rho_{\omega(k)}(Z_k)$ if $\chi(k) = r$.

For $D \in \L\R^\lat(\chi, \omega)$, recursively define $\bE_D(\mu_1(Z_1), \dots, \mu_n(Z_n))$ as follows: If $D \in \L\R_0^\lat(\chi, \omega)$, then
\[\bE_D(\mu_1(Z_1), \dots, \mu_n(Z_n)) = (\E_{\L(\X)})_\pi(\mu_1(Z_1), \dots, \mu_n(Z_n)) \in \B,\]
where $\pi \in \B\N\C(\chi)$ is the bi-non-crossing partition corresponding to $D$. If every block of $D$ has a spine reaching the top, then enumerate the blocks from left to right according to their spines as $V_1, \dots, V_m$ with $V_j = \{k_{j, 1} < \cdots < k_{j, q_j}\}$, and set
\[\bE_D(\mu_1(Z_1), \dots, \mu_n(Z_n)) = [(1 - \mathfrak{p}_{\omega(k_{1, 1})})Z_{k_{1, 1}}\cdots Z_{k_{1, q_1}}(1 \oplus 0)] \otimes \cdots \otimes [(1 - \mathfrak{p}_{\omega(k_{m, 1})})Z_{k_{m, 1}}\cdots Z_{k_{m, q_m}}(1 \oplus 0)], \]
which is an element of $\X^\circ$.
Otherwise, apply the recursive process using $\bE_{\L(\X)}$ as in Definition \ref{E-pi} until every block of $D$ has a spine reaching the top.
\end{defn}

Under the above assumptions and notation, it was demonstrated in \cite{CNS2015-2}*{Lemma 7.1.3} that
\begin{equation}\label{MomentExpression}
\mu_1(Z_1)\cdots\mu_n(Z_n)(1 \oplus 0) = \sum_{k = 0}^n\sum_{D \in \L\R_k^\lat(\chi, \omega)}\left[\sum_{\substack{D' \in \L\R_k(\chi, \omega)\\D' \geq_\lat D}}(-1)^{|D| - |D'|}\right]\bE_D(\mu_1(Z_1), \dots, \mu_n(Z_n))
\end{equation}
and, consequently,
\begin{equation}\label{E-Moment}
\bE_{\L(\X)}(\mu_1(Z_1)\cdots\mu_n(Z_n)) = \sum_{\pi \in \B\N\C(\chi)}\left[\sum_{\substack{\sigma \in \B\N\C(\chi)\\\pi \leq \sigma \leq \omega}}\mu_{\B\N\C}(\pi, \sigma)\right](\E_{\L(\X)})_\pi(\mu_1(Z_1), \dots, \mu_n(Z_n)).
\end{equation}

For $D \in \L\R^{\lat\capp}(\chi, \omega)$, we define $\bE_D(\mu_1(Z_1), \dots, \mu_n(Z_n))$ by exactly the same recursive process that used to define $\bE_{D'}(\mu_1(Z_1), \dots, \mu_n(Z_n))$ for $D' \in \L\R^\lat(\chi, \omega)$. Note that, unlike $\bE_{D'}(\mu_1(Z_1), \dots, \mu_n(Z_n))$, it is not necessarily true that $\bE_D(\mu_1(Z_1), \dots, \mu_n(Z_n)) \in \X$ for all $D \in \L\R^{\lat\capp}(\chi, \omega)$ as such diagrams may have spines reaching the top which do not alternate in colour. 

If $\bE_D(\mu_1(Z_1), \dots, \mu_n(Z_n)) = X_1 \otimes \cdots \otimes X_m$, let
\[\mathfrak{q}(\bE_D(\mu_1(Z_1), \dots, \mu_n(Z_n))) = \mathfrak{q}(X_1)\cdots\mathfrak{q}(X_m) \in \D.\]
Observe that although it is possible $X_1 \otimes \cdots \otimes X_m \notin \X^\circ$, it is still true that every $X_j$ belongs to some $\X_{k_j}^\circ$, and thus the above expression makes sense.

Finally for $D \in \L\R^{\lat\capp}(\chi, \omega)$, recursively define $\bF_D(\mu_1(Z_1), \dots, \mu_n(Z_n))$ as follows: If $D \in \L\R_0^{\lat\capp}(\chi, \omega)$, then
\[\bF_D(\mu_1(Z_1), \dots, \mu_n(Z_n)) = \bE_D(\mu_1(Z_1), \dots, \mu_n(Z_n)) \in \B \subset \D.\]
If every block of $D$ has a spine reaching the top, then enumerate the blocks from left to right according to their spines as $V_1, \dots, V_m$ with $V_j = \{k_{j, 1} < \cdots < k_{j, q_j}\}$, and set
\[\bF_D(\mu_1(Z_1), \dots, \mu_n(Z_n)) = \bF_{\L(\X)}(Z_{k_{1, 1}}\cdots Z_{k_{1, q_1}})\cdots\bF_{\L(\X)}(Z_{k_{m, 1}}\cdots Z_{k_{m, q_m}}) \in \D.\]
Otherwise, apply the recursive process using $\bE_{\L(\X)}$ as in Definition \ref{E-pi} until every block of $D$ has a spine reaching the top.

Note the values of $\bF_D(\mu_1(Z_1), \dots, \mu_n(Z_n))$ depend only on the values of $\bF_{\L(\X_{k_j})}(Z_{k_{j, 1}}\cdots Z_{k_{j, q_j}})$ and the values of $\bE_{\L(\X_{k_j})}(Z_{k_{j, 1}}\cdots Z_{k_{j, q_j}})$ for some $k_j \in K$.  Hence $\bF_D$ makes sense in any $\B$-$\B$-non-commutative probability space with a pair of $(\B, \D)$-vector expectations and will not depend on the representation of the pairs of $\B$-algebras.  

\begin{lem}\label{ChangeCoeff}
Under the above assumptions and notation, for all $D \in \L\R^\lat(\chi, \omega)$
\[\sum_{\substack{D' \in \L\R^{\lat\capp}(\chi, \omega)\\D' \leq_\capp D}}\mathfrak{q}\left(\bE_{D'}(\mu_1(Z_1), \dots, \mu_n(Z_n))\right) = \bF_D(\mu_1(Z_1), \dots, \mu_n(Z_n)).\]
\end{lem}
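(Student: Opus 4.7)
The proof proceeds by strong induction on the number $t(D)$ of blocks of $D$ whose spines do not reach the top. The key structural observation is that the recursive definition of $\bE_{D'}$ for $D' \in \L\R^{\lat\capp}(\chi, \omega)$ and the recursive definition of $\bF_D$ both process non-top-reaching blocks identically via the $\bE_{\L(\X)}$-process of Definition \ref{E-pi}. Since capping only removes top-reaching spines, every non-top-reaching block of $D$ remains non-top-reaching in every $D' \leq_\capp D$. Thus, in the inductive step with $t(D) \geq 1$, choose $V$ to be the non-top-reaching block of $D$ terminating closest to the bottom. Processing $V$ on both sides produces the same scalar in $\B$, absorbed as a left or right $\B$-operator into an adjacent block. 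This reduces the identity to the analogous one for a smaller diagram $\widetilde{D}$ with $t(\widetilde{D}) = t(D) - 1$, under the bijective correspondence $D' \leftrightarrow \widetilde{D}'$ between cappings obtained by removing $V$; the induction hypothesis applied to $\widetilde{D}$ then yields the result.

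For the base case $t(D) = 0$, every block $V_1, \dots, V_m$ of $D$ reaches the top; label them left-to-right by $\prec_\chi$, and abbreviate $\bE_j = \bE_{\L(\X_{\omega(V_j)})}(Z_{V_j})$ and $\bF_j = \bF_{\L(\X_{\omega(V_j)})}(Z_{V_j})$. A capping $D' \leq_\capp D$ is determined by the subset $S \subseteq \{1, \dots, m\}$ of blocks whose spines are kept at the top, the remaining blocks $T = \{1, \dots, m\} \setminus S$ being scalarized during the computation of $\bE_{D'}$. I will verify that
\[
\mathfrak{q}(\bE_{D'}) \;=\; \prod_{j=1}^m a_j, \qquad a_j = \begin{cases} \bF_j - \bE_j & \text{if } j \in S,\\ \bE_j & \text{if } j \in T, \end{cases}
\]
using three ingredients: the identity
\[
\mathfrak{q}_k\bigl((1 - \mathfrak{p}_k)Z(1 \oplus 0)\bigr) \;=\; \bF_{\L(\X_k)}(Z) - \bE_{\L(\X_k)}(Z),
\]
immediate from $\mathfrak{q}_k(1 \oplus 0) = 1$ together with the definitions of $\bE_{\L(\X_k)}$ and $\bF_{\L(\X_k)}$; the multiplicativity $\mathfrak{q}(x_1 \otimes \cdots \otimes x_p) = \mathfrak{q}_{k_1}(x_1) \cdots \mathfrak{q}_{k_p}(x_p)$ on simple tensors; and the $\B$-balance of the tensor product, which allows the scalars $\bE_j$ (for $j \in T$) absorbed via $L$- or $R$-operators to be extracted through $\mathfrak{q}$ into the correct left-to-right position of the product in $\D$. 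Summing over $S$ and expanding then gives
\[
\sum_{D' \leq_\capp D} \mathfrak{q}(\bE_{D'}) \;=\; \sum_{S \subseteq \{1, \dots, m\}} \prod_{j \in S}(\bF_j - \bE_j) \prod_{j \notin S} \bE_j \;=\; \prod_{j=1}^m \bigl(\bE_j + (\bF_j - \bE_j)\bigr) \;=\; \prod_{j=1}^m \bF_j \;=\; \bF_D,
\]
by the noncommutative binomial expansion and the explicit formula for $\bF_D$ on a fully top-reaching diagram.

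The main obstacle is the combinatorial bookkeeping in the base case: when several $T$-blocks are scalarized in succession and their scalars $\bE_j$ are absorbed through chains of adjacent blocks (sometimes accumulating with further scalars before being placed as $L$- or $R$-operators on an $S$-block, or producing the final scalar when $S = \emptyset$), one must verify that the net effect after applying $\mathfrak{q}$ is precisely to insert each $\bE_j$ in the $j$-th position of the left-to-right product in $\D$. This relies on associativity and $\B$-bilinearity of the bimodule tensor structure together with the multiplicativity of $\mathfrak{q}$, but tracking how chained absorptions interact with the left-versus-right placement dictated by $\chi$ is the most delicate part of the argument.
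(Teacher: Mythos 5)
Your proposal is correct and rests on the same core ingredients as the paper's proof: the identity $\mathfrak{q}_k((1-\mathfrak{p}_k)Z(1\oplus 0)) = \bF_{\L(\X_k)}(Z) - \bE_{\L(\X_k)}(Z)$, the factorization of $\mathfrak{q}$ across tensor factors, and the telescoping sum over cappings (your subset expansion $\sum_S \prod_{j\in S}(\bF_j-\bE_j)\prod_{j\notin S}\bE_j = \prod_j \bF_j$ is exactly the paper's pairing of cappings by whether the leftmost top-reaching spine survives, carried out one factor at a time). The only differences are organizational --- you induct on the non-top-reaching blocks first and then expand over subsets, while the paper inducts on the top-reaching blocks from left to right --- and both arguments defer the same delicate point (that absorbed $\B$-scalars land in the correct left-to-right position after applying $\mathfrak{q}$) to casework with the bi-multiplicative-like properties of the recursion.
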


\begin{proof}
If $D \in \L\R^\lat_0(\chi, \omega)$, then the only diagram $D' \in \L\R^{\lat\capp}(\chi, \omega)$ such that $D' \leq_\capp D$ is $D$ itself. Thus the equation is trivially true by definition in this case. 

For $D \in \L\R_m^\lat(\chi, \omega)$ with $0 < m \leq n$, it suffices to prove the following claim: Let $V_1, \dots, V_m$ be the blocks of $D$ with spines reaching the top, ordered from left to right according to their spines, let $V_1 = \{k_{1, 1} < \cdots < k_{1, q_1}\}$, and let $V_{1, 1}, \dots, V_{1, m_1}$ be the blocks of $D$ which reduce to appropriate $L_b$ or $R_b$ multiplied on the left and/or right of some $Z_{k_{1, j}}$ in the recursive process.  Suppose $D', D'' \in \L\R^{\lat\capp}(\chi, \omega)$ are such that $D' \leq_{\capp} D$,  $D'' \leq_{\capp} D$, the spine of the block $V_1$ reaches the top in $D'$ but not in $D''$, and the spines of all other blocks in $D'$ and $D''$ agree.  We claim that
\begin{align*}
&\mathfrak{q}\left(\bE_{D'}(\mu_1(Z_1), \dots, \mu_n(Z_n))\right) + \mathfrak{q}\left(\bE_{D''}(\mu_1(Z_1), \dots, \mu_n(Z_n))\right)\\
&= \bF_{\L(\X)}(Z'_{k_{1, 1}}\cdots Z'_{k_{1, q_1}})\mathfrak{q}\left(\bE_{D' \setminus (V_1 \cup V_{1, 1} \cup \cdots \cup V_{1, m_1})}\left((\mu_1(Z_1), \dots, \mu_n(Z_n))|_{D' \setminus (V_1 \cup V_{1, 1} \cup \cdots \cup V_{1, m_1})}\right)\right),
\end{align*}
where $Z'_{k_{1, j}}$ is $Z_{k_{1, j}}$, potentially multiplied on the left and/or right by appropriate $L_b$ and $R_b$ such that the multiplications correspond to the blocks $V_{1, 1}, \dots, V_{1, m_1}$.

Indeed, if the claim is true, then for a given $D$ as above, the spine of $V_1$ reaches the top in exactly half of the cappings of $D$ and each such capping $D'$ can be paired with another capping $D''$ such that the only difference between $D'$ and $D''$ is that the spine of $V_1$ does not reach the top in $D''$. Adding up $\mathfrak{q}\left(\bE_{D'}(\mu_1(Z_1), \dots, \mu_n(Z_n))\right)$ and $\mathfrak{q}\left(\bE_{D''}(\mu_1(Z_1), \dots, \mu_n(Z_n))\right)$ for all pairs  yield the result by induction.

To prove the claim, note if $m = 1$ (that is, the only spine that reaches the top is the spine of $V_1$), then $V_1 \cup V_{1, 1} \cup \cdots \cup V_{1, m_1} = D'$ and we have
\begin{align*}
\mathfrak{q}\left(\bE_{D'}(\mu_1(Z_1), \dots, \mu_n(Z_n))\right) &= \bF_{\L(\X)}(Z'_{k_{1, 1}}\cdots Z'_{k_{1, q_1}}) - \bE_{\L(\X)}(Z'_{k_{1, 1}}\cdots Z'_{k_{1, q_1}})\\
&= \bF_{\L(\X)}(Z'_{k_{1, 1}}\cdots Z'_{k_{1, q_1}}) - \mathfrak{q}\left(\bE_{D''}(\mu_1(Z_1), \dots, \mu_n(Z_n))\right)
\end{align*}
as $D''$ has no spine reaching the top and $\mathfrak{q}(b) = b$.   Thus the result follows when $m = 1$.  

Otherwise, $m > 1$. Let $V = V_1 \cup V_{1, 1} \cup \cdots \cup V_{1, m_1}$.  Since left $\B$-operators commute with elements of $\L_r(\X)$, right $\B$-operators commute with elements of $\L_\ell(\X)$, and by the properties of $\bE_{\L(\X)}$ and $\bF_{\L(\X)}$ (i.e., there are bi-multiplicative-like properties implied by the recursive definition), it can be checked via casework that
\begin{align*}
\mathfrak{q}&\left(\bE_{D'}(\mu_1(Z_1), \dots, \mu_n(Z_n))\right) \\
&= \left(\bF_{\L(\X)}(Z'_{k_{1, 1}}\cdots Z'_{k_{1, q_1}}) - \bE_{\L(\X)}(Z'_{k_{1, 1}}\cdots Z'_{k_{1, q_1}})\right)\mathfrak{q}\left(\bE_{D' \setminus V}\left((\mu_1(Z_1), \dots, \mu_n(Z_n))|_{D' \setminus V}\right)\right)
\end{align*}

and
\[\bE_{\L(\X)}(Z'_{k_{1, 1}}\cdots Z'_{k_{1, q_1}})\mathfrak{q}\left(\bE_{D' \setminus V}\left((\mu_1(Z_1), \dots, \mu_n(Z_n))|_{D' \setminus V}\right)\right) = \mathfrak{q}\left(\bE_{D''}(\mu_1(Z_1), \dots, \mu_n(Z_n))\right)\]
for all $D'$ and $D''$.  Thus the claim and proof follows.
\end{proof}

To keep track of some coefficients that occur, we make the following definition.

\begin{defn}\label{CDandC'D}
For $D \in \L\R^{\lat\capp}_k(\chi, \omega)$, define $C'_D$ as follows: First define
\[
C_D = \begin{cases}
\displaystyle\sum_{\substack{D' \in \mathcal{L}\mathcal{R}_k(\chi, \omega)\\D' \geq_\lat D}}(-1)^{|D| - |D'|} &\text{if } D \in \mathcal{L}\mathcal{R}^{\lat}_k(\chi, \omega)\\
0 &\text{otherwise}
\end{cases}.
\]
Recursively, starting with $k = n$, define
\[
C'_D = C_D - \sum^n_{m = k + 1}\sum_{\substack{D' \in \mathcal{L}\mathcal{R}^{\lat\capp}_m(\chi, \omega)\\D' \geq_\capp D}}C'_{D'}.
\]
\end{defn}

With Lemma \ref{ChangeCoeff} complete, we obtain the following operator-valued analogue of \cite{GS2016}*{Lemma 4.6}.

\begin{lem}\label{F-Moment}
Under the above assumptions and notation,
\[\bF_{\L(\X)}(\mu_1(Z_1)\cdots\mu_n(Z_n)) = \sum_{k = 0}^n\sum_{D \in \mathcal{L}\mathcal{R}^{\lat\capp}_k(\chi, \omega)}C'_D\bF_D(\mu_1(Z_1), \dots, \mu_n(Z_n)),\]
and
\[C'_D = \sum_{\substack{ D' \in \mathcal{L}\mathcal{R}(\chi, \omega)\\D' \geq_{\lat\capp} D}}(-1)^{|D| - |D'|} = \sum^n_{m = k}\sum_{\substack{D' \in \mathcal{L}\mathcal{R}_m(\chi, \omega)\\D' \geq_{\lat\capp} D}}(-1)^{|D| - |D'|}\]
for $D \in \mathcal{L}\mathcal{R}^{\lat\capp}_k(\chi, \omega)$.
\end{lem}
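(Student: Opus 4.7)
The plan proceeds in two main steps: first derive the moment formula by applying $\mathfrak{q}$ to \eqref{MomentExpression}, then verify the alternative expression for $C'_D$ by downward induction using a combinatorial cancellation.

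For the first step, apply $\mathfrak{q}$ to equation \eqref{MomentExpression} and use $\bF_{\L(\X)}(Z) = \mathfrak{q}(Z(1 \oplus 0))$ to write
\[
\bF_{\L(\X)}(\mu_1(Z_1) \cdots \mu_n(Z_n)) = \sum_{k=0}^n \sum_{D \in \L\R^\lat_k(\chi,\omega)} C_D\, \mathfrak{q}(\bE_D(\mu_1(Z_1), \dots, \mu_n(Z_n))).
\]
The pairing argument of Lemma \ref{ChangeCoeff} is local to each top-reaching spine, so it extends verbatim to any $D \in \L\R^{\lat\capp}$, giving $\bF_D = \sum_{D' \leq_\capp D,\, D' \in \L\R^{\lat\capp}} \mathfrak{q}(\bE_{D'})$. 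Definition \ref{CDandC'D} is equivalent to $\sum_{D \geq_\capp D',\, D \in \L\R^{\lat\capp}} C'_D = C_{D'}$ (with $C_{D'} = 0$ unless $D' \in \L\R^\lat$), so swapping summations yields
\[
\sum_{D \in \L\R^{\lat\capp}} C'_D \bF_D = \sum_{D' \in \L\R^{\lat\capp}} \mathfrak{q}(\bE_{D'}) \sum_{D \geq_\capp D'} C'_D = \sum_{D' \in \L\R^\lat} C_{D'}\, \mathfrak{q}(\bE_{D'}) = \bF_{\L(\X)}(\mu_1(Z_1)\cdots\mu_n(Z_n)),
\]
which is the first equality.

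For the second step, set $f(D) = \sum_{D' \in \L\R(\chi,\omega),\, D' \geq_{\lat\capp} D}(-1)^{|D|-|D'|}$ and show $C'_D = f(D)$ by downward induction on $k$ for $D \in \L\R^{\lat\capp}_k$. The base case $k = n$ is immediate since $\L\R^{\lat\capp}_n = \L\R_n$ forces $D' = D$. For the inductive step, substituting the hypothesis into the recursion reduces everything to the identity
\[
C_D = \sum_{D_0 \geq_\capp D,\, D_0 \in \L\R^{\lat\capp}} f(D_0) = \sum_{D' \in \L\R(\chi,\omega)} (-1)^{-|D'|} \sum_{\substack{D_0 \in \L\R^{\lat\capp}\\ D \leq_\capp D_0 \leq_{\lat\capp} D'}}(-1)^{|D_0|}.
\]
Since capping preserves the block partition while lateral refinement preserves the top-reaching spine count, for $D' \in \L\R_m(\chi,\omega)$ the lateral refinement $\tilde D$ of $D'$ in $\L\R^\lat_m$ whose partition matches $D$'s is unique when it exists, and the intermediate $D_0$'s are parametrised by subsets of $T(\tilde D) \setminus T(D)$ (the $m-k$ top-reaching spines of $\tilde D$ outside $D$). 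As $|D_0|-|D|$ equals the size of the chosen subset, the inner sum evaluates to $(-1)^{|D|}(1-1)^{m-k}$, which vanishes unless $m = k$. When $m = k$, the condition $D' \geq_{\lat\capp} D$ forces $\tilde D = D$, so $D \in \L\R^\lat_k$ and $D' \geq_\lat D$, and the surviving sum becomes $(-1)^{|D|}\sum_{D' \in \L\R_k,\, D' \geq_\lat D}(-1)^{-|D'|} = C_D$. The second equality in the statement is then the decomposition $\L\R(\chi,\omega) = \bigsqcup_{m=0}^n \L\R_m(\chi,\omega)$, where only $m \geq k$ can contribute.

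The main obstacle is the combinatorial cancellation in the inductive step, specifically proving that for fixed $D$ and $D'$ the set $\{D_0 \in \L\R^{\lat\capp}: D \leq_\capp D_0 \leq_{\lat\capp} D'\}$ is a Boolean lattice of rank $m - k$. This rests on the compatibility of $\leq_\lat$ (acts only on the partition) and $\leq_\capp$ (acts only on the top-reaching spines) as operations on disjoint structural features, together with the uniqueness of the lateral refinement of $D'$ with a prescribed target partition; handling this uniqueness carefully (and the case when no such refinement exists, in which the inner sum is empty) is where the combinatorial details live.
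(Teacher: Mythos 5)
Your proof of the first equality is essentially the paper's: apply $\mathfrak{q}$ to equation \eqref{MomentExpression}, convert $\mathfrak{q}(\bE_D)$ into $\bF_D$ via Lemma \ref{ChangeCoeff}, and observe that Definition \ref{CDandC'D} is exactly the relation $\sum_{D \geq_\capp D'} C'_D = C_{D'}$ needed to make the summation swap close up; your version just makes that swap explicit where the paper says the coefficient ``was specifically defined this way.'' Where you genuinely diverge is the second claim, the closed formula for $C'_D$: the paper does not prove it at all but cites \cite{GS2016}*{Lemma 4.7}, which is legitimate since the identity is purely about the diagrams $\L\R(\chi,\omega)$ and is insensitive to $\B$ and $\D$. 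Your downward induction with the Boolean-lattice cancellation $\sum_{S \subseteq T}(-1)^{|S|} = 0^{m-k}$ is a correct self-contained replacement, and the point you isolate as the crux is the right one: since capping preserves the entire block-and-lateral structure and only toggles which spines reach the top, while lateral refinement preserves the top-spine data of $D'$, the intermediate diagram $\tilde D$ is forced to carry the block structure of $D$ and the top spines inherited from $D'$, hence is unique when it exists, and the interval $\{D_0 : D \leq_\capp D_0 \leq_\capp \tilde D\}$ is indeed a Boolean lattice of rank $m-k$. (One caution: ``whose partition matches $D$'s'' should be read as matching the full sub-diagram data, not merely the underlying set partition, since distinct lateral placements can share a partition; with that reading your uniqueness claim holds.) The trade-off is that the paper's route is shorter and leans on the scalar-valued combinatorics already established, whereas yours keeps the lemma self-contained at the cost of re-proving that combinatorial identity.
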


\begin{proof}
For $Z_1, \dots, Z_n$ as above, the expression $\bF_{\L(\X)}(\mu_1(Z_1)\cdots\mu_n(Z_n))$ is obtained by applying $\mathfrak{q}$ to the left-hand side of equation \eqref{MomentExpression}. Using Definition \ref{CDandC'D}, we have
\begin{align*}
&\bF_{\L(\X)}(\mu_1(Z_1)\cdots\mu_n(Z_n))\\
&= \mathfrak{q}\left(\mu_1(Z_1)\cdots\mu_n(Z_n)(1 \oplus 0)\right)\\
&= \sum_{k = 0}^n\sum_{D \in \L\R_k^\lat(\chi, \omega)}C_D\mathfrak{q}\left(\bE_D(\mu_1(Z_1), \dots, \mu_n(Z_n))\right)\\
&= \sum_{k = 0}^n\sum_{D \in \L\R_k^\lat(\chi, \omega)}C_D\left(\bF_D(\mu_1(Z_1), \dots, \mu_n(Z_n)) - \sum_{\substack{D' \in \L\R^{\lat\capp}(\chi, \omega)\\D' \leq_\capp D\\D' \neq D}}\mathfrak{q}\left(\bE_{D'}(\mu_1(Z_1), \dots, \mu_n(Z_n))\right)\right)\\
&= \sum_{k = 0}^n\sum_{D \in \mathcal{L}\mathcal{R}^{\lat\capp}_k(\chi, \omega)}C'_D\bF_D(\mu_1(Z_1), \dots, \mu_n(Z_n)),
\end{align*}
where the third equality follows from Lemma \ref{ChangeCoeff} and the fourth equality follows from Definition \ref{CDandC'D} as the coefficient $C'_D$ for $D \in \mathcal{L}\mathcal{R}^{\lat\capp}_k(\chi, \omega)$ was specifically defined this way. The second result regarding $C'_D$ is exactly the content of \cite{GS2016}*{Lemma 4.7}.
\end{proof}

Combining these results, we have the following moment type characterization of c-bi-free independence with amalgamation.

\begin{thm}\label{MomentFormulae}
A family $\{(\A_{k, \ell}, \A_{k, r})\}_{k \in K}$ of pairs of $\B$-algebras in a $\B$-$\B$-non-commutative probability space with a pair of $(\B, \D)$-valued expectations $(\A, \mathbb{E}, \mathbb{F}, \varepsilon)$ is c-bi-free over $(\B, \D)$ if and only if
\begin{equation}\label{EA-Moment}
\bE(Z_1\cdots Z_n) = \sum_{\pi \in \B\N\C(\chi)}\left[\sum_{\substack{\sigma \in \B\N\C(\chi)\\\pi \leq \sigma \leq \omega}}\mu_{\B\N\C}(\pi, \sigma)\right]\E_\pi(Z_1, \dots, Z_n)
\end{equation}
and
\begin{equation}\label{FA-Moment}
\bF(Z_1\cdots Z_n) = \sum_{D \in \L\R^{\lat\capp}(\chi, \omega)}\left[\sum_{\substack{D' \in \L\R(\chi, \omega)\\D' \geq_{\lat\capp} D}}(-1)^{|D| - |D'|}\right]\bF_D(Z_1, \dots, Z_n)
\end{equation}
for all $n \geq 1$, $\chi: \{1, \dots, n\} \to \{\ell, r\}$, $\omega: \{1, \dots, n\} \to K$, and $Z_1, \dots, Z_n \in \A$ with $Z_k \in \A_{\omega(k), \chi(k)}$.
\end{thm}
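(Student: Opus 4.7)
The plan is to derive both implications from the representation-theoretic framework of Theorem \ref{Embedding} and Construction \ref{Construction}, leveraging the lemmas already established in this section.

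For the ``only if'' direction, assume the family is c-bi-free over $(\B, \D)$. By Definition \ref{defn:op-c-bi-free-definition}, the joint $(\bE, \bF)$-distribution of the family agrees with that of the family $\{(\lambda_k \circ \ell_k(\A_{k, \ell}), \rho_k \circ r_k(\A_{k, r}))\}_{k \in K}$ inside $\L(\X)$ with respect to $(\bE_{\L(\X)}, \bF_{\L(\X)})$, where $(\X, \X^\circ, \mathfrak{p}, \mathfrak{q}) = (*_\B)_{k \in K}(\X_k, \X_k^\circ, \mathfrak{p}_k, \mathfrak{q}_k)$. Equation \eqref{EA-Moment} is then immediate from equation \eqref{E-Moment} applied in $\L(\X)$, since the right-hand side involves only $\E_\pi$-values which are built from moments within each $\L(\X_k)$ and so agree with those computed in $\A_{k, \chi}$. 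Equation \eqref{FA-Moment} follows in the same spirit from Lemma \ref{F-Moment}: apply it in $\L(\X)$, invoke its second conclusion to identify $C'_D$ with $\sum_{D' \geq_{\lat\capp} D}(-1)^{|D| - |D'|}$, and note that $\bF_D$ depends only on single-algebra moments of $\bE$ and $\bF$ that are preserved under the representations.

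For the ``if'' direction, observe that the right-hand sides of \eqref{EA-Moment} and \eqref{FA-Moment} are determined purely by the individual $(\bE, \bF)$-distributions of the pairs $(\A_{k, \ell}, \A_{k, r})$, since every $\E_\pi(Z_1, \dots, Z_n)$ and every $\bF_D(Z_1, \dots, Z_n)$ factors through moments indexed by a single $k \in K$ (given that blocks of $\pi \leq \omega$ and of each diagram $D$ respect the $\omega$-coloring). For each $k$, apply Theorem \ref{Embedding} to represent $(\A_{k, \ell}, \A_{k, r})$ on some $(\X_k, \X_k^\circ, \mathfrak{p}_k, \mathfrak{q}_k)$, form the free product of Construction \ref{Construction}, and consider the resulting family $\{(\lambda_k \circ \ell_k(\A_{k, \ell}), \rho_k \circ r_k(\A_{k, r}))\}_{k \in K}$ in $\L(\X)$, which is c-bi-free over $(\B, \D)$ by Definition \ref{defn:op-c-bi-free-definition}. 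By the ``only if'' direction, this c-bi-free family satisfies \eqref{EA-Moment} and \eqref{FA-Moment} with the same marginal distributions as the original family, so its joint $(\bE_{\L(\X)}, \bF_{\L(\X)})$-distribution coincides with the original joint $(\bE, \bF)$-distribution, establishing c-bi-freeness.

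The main technical obstacle, already absorbed into the preceding lemmas, is the forward direction for $\bF$: one must match the expansion of $\mathfrak{q}(\mu_1(Z_1) \cdots \mu_n(Z_n)(1 \oplus 0))$ against the correct combinatorial data of shaded diagrams with cappings, which is exactly the content of Lemma \ref{F-Moment} via the coefficient reorganization afforded by Lemma \ref{ChangeCoeff} and Definition \ref{CDandC'D}. Once those combinatorial identities are in place, the theorem itself reduces to an essentially formal assembly of previously established results, paralleling the argument used in \cite{GS2016}*{Theorem 4.1} for the scalar case.
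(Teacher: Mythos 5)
Your proposal is correct and follows essentially the same route as the paper: the forward direction is Definition \ref{defn:op-c-bi-free-definition} combined with equation \eqref{E-Moment} (i.e.\ \cite{CNS2015-2}*{Theorem 7.1.4}) and Lemma \ref{F-Moment}, and the converse uses Theorem \ref{Embedding} plus Construction \ref{Construction} together with the observation that the right-hand sides of \eqref{EA-Moment} and \eqref{FA-Moment} depend only on the individual distributions of the pairs. The only cosmetic difference is that the paper applies Theorem \ref{Embedding} once to all of $\A$ and takes $K$-indexed copies of the resulting bimodule, whereas you apply it pair by pair; these are interchangeable.
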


\begin{proof}
Under the above notation, if the family $\{(\A_{k, \ell}, \A_{k, r})\}_{k \in K}$ is c-bi-free over $(\B, \D)$, then there exists a family $\{(\X_k, \X^\circ_k, \mathfrak{p}_k, \mathfrak{q}_k)\}_{k \in K}$ such that
\[\bE(Z_1\cdots Z_n) = \bE_{\L(\X)}(\mu_1(Z_1)\cdots\mu_n(Z_n))\qand\bF(Z_1\cdots Z_n) = \bF_{\L(\X)}(\mu_1(Z_1)\cdots\mu_n(Z_n)),\]
where each $Z_k$ on the right-hand side of the above equations is identified as $\ell_k(Z_k)$ if $\chi(k) = \ell$ and $r_k(Z_k)$ if $\chi(k) = r$ acting on $\X_{\omega(k)}$. The fact that equation \eqref{EA-Moment} holds is part of \cite{CNS2015-2}*{Theorem 7.1.4}, and the fact that equation \eqref{FA-Moment} holds follows from Lemma \ref{F-Moment}.

Conversely, suppose equations \eqref{EA-Moment} and \eqref{FA-Moment} hold. By Theorem \ref{Embedding}, there exist $(\mathcal{X}, \mathcal{X}^\circ, \mathfrak{p}, \mathfrak{q})$ and a unital homomorphism $\theta: \A \to \L(\X)$ such that
\begin{gather*}
\theta(L_{b_1}R_{b_2}) = L_{b_1}R_{b_2},\quad\theta(\A_\ell) \subset \L_\ell(\X),\quad\theta(\A_r) \subset \L_r(\X),\\
\mathbb{E}_{\L(\X)}(\theta(Z)) = \mathbb{E}(Z),\qand\mathbb{F}_{\L(\X)}(\theta(Z)) = \mathbb{F}(Z)
\end{gather*}
for all $b_1, b_2 \in \B$ and $Z \in \A$. For each $k \in K$, let $(\X_k, \X^\circ_k, \mathfrak{p}_k, \mathfrak{q}_k)$ be a copy of $(\X, \X^\circ, \mathfrak{p}, \mathfrak{q})$, and let $\ell_k$ and $r_k$ be copies of $\theta: \A \to \L(\X_k)$. By \cite{CNS2015-2}*{Lemma 7.1.3} and Lemma \ref{F-Moment}, we have
\[\bE(Z_1\cdots Z_n) = \bE_{\L((*_\B)_{k \in K}\X_k)}(\mu_1(Z_1)\cdots\mu_n(Z_n))\qand\bF(Z_1\cdots Z_n) = \bF_{\L((*_\B)_{k \in K}\X_k)}(\mu_1(Z_1)\cdots\mu_n(Z_n)),\]
where each $Z_k$ on the right-hand side of the above equations is identified as $\theta(Z_k)$ acting on $\X_{\omega(k)}$. Hence, the family $\{(\A_{k, \ell}, \A_{k, r})\}_{k \in K}$ is c-bi-free over $(\B, \D)$ by definition.
\end{proof}

As $\bF_D(Z_1, \ldots, Z_n)$ and $\E_\pi(Z_1, \ldots, Z_n)$ depend only on the distributions of individual pairs $(\A_{k,\ell}, \A_{k, r})$ inside our $\B$-$\B$-non-commutative probability space with a pair of $(\B, \D)$-valued expectations, we obtain that Definition \ref{defn:op-c-bi-free-definition} is well-defined in that the joint distributions do not depend on the representations.

\section{Additivity of operator-valued conditionally bi-free cumulant pairs}\label{sec:additivity}

The goal of this section is to prove the operator-valued analogue of \cite{GS2016}*{Theorem 4.1}; namely that  conditionally bi-free independence with amalgamation is equivalent to the vanishing of mixed operator-valued bi-free and conditionally bi-free cumulants. To establish the result, we will need a method, analogous to \cite{S2015}*{Lemma 3.8} for constructing a pair of $\B$-algebras with any given operator-valued bi-free and conditionally bi-free cumulants. To this end, we discuss moment and cumulant series first.

Let $(\A, \bE, \bF, \varepsilon)$ be a $\B$-$\B$-non-commutative probability space with a pair of $(\B, \D)$-valued expectations, and let $(\C_\ell, \C_r)$ be a pair of $\B$-algebras such that
\[\C_\ell = \alg(\{Z_i\}_{i \in I}, \varepsilon(\B \otimes 1)) \qand \C_r = \alg(\{Z_j\}_{j \in J}, \varepsilon(1 \otimes \B^{\mathrm{op}}))\]
for some $\{Z_i\}_{i \in I} \subset \A_\ell$ and $\{Z_j\}_{j \in J} \subset \A_r$. By discussions in \cite{S2015}*{Section 2} and by using the operator-valued conditionally bi-multiplicative properties, only certain operator-valued bi-free and conditionally bi-free moments/cumulants are required to study the joint distributions of elements in $\alg(\C_\ell, \C_r)$ with respect to $(\bE, \bF)$. We make the following notation (in addition to \cite{S2015}*{Notation 2.18} with some slight notational changes) and definition to describe the necessary moments and cumulants.

\begin{nota}\label{MCSeries}
Let $\Z = \{Z_i\}_{i \in I} \sqcup \{Z_j\}_{j \in J}$ be as above, $n \geq 1$, $\omega: \{1, \dots, n\} \to I \sqcup J$, and $b_1, \dots, b_{n - 1} \in \B$.
\begin{itemize}
\item If $\omega(k) \in I$ for all $k$, define
\begin{align*}
\nu_{\omega}^{\mathcal{Z}}(b_1, \dots, b_{n - 1}) &= \bE\left(Z_{\omega(1)}L_{b_1}Z_{\omega(2)}\cdots L_{b_{n - 1}}Z_{\omega(n)}\right) \in \B,\\
\mu_{\omega}^{\mathcal{Z}}(b_1, \dots, b_{n - 1}) &= \bF\left(Z_{\omega(1)}L_{b_1}Z_{\omega(2)}\cdots L_{b_{n - 1}}Z_{\omega(n)}\right) \in \D,\\
\rho_{\omega}^{\mathcal{Z}}(b_1, \dots, b_{n - 1}) &= \kappa_{1_{\chi_\omega}}\left(Z_{\omega(1)}, L_{b_1}Z_{\omega(2)}, \dots, L_{b_{n - 1}}Z_{\omega(n)}\right) \in \B, \text{ and}\\
\eta_{\omega}^{\mathcal{Z}}(b_1, \dots, b_{n - 1}) &= \K_{1_{\chi_\omega}}\left(Z_{\omega(1)}, L_{b_1}Z_{\omega(2)}, \dots, L_{b_{n - 1}}Z_{\omega(n)}\right) \in \D.
\end{align*}

\item If $\omega(k) \in J$ for all $k$, define
\begin{align*}
\nu_{\omega}^{\mathcal{Z}}(b_1, \dots, b_{n - 1}) &= \bE\left(Z_{\omega(1)}R_{b_1}Z_{\omega(2)}\cdots R_{b_{n - 1}}Z_{\omega(n)}\right) \in \B,\\
\mu_{\omega}^{\mathcal{Z}}(b_1, \dots, b_{n - 1}) &= \bF\left(Z_{\omega(1)}R_{b_1}Z_{\omega(2)}\cdots R_{b_{n - 1}}Z_{\omega(n)}\right) \in \D,\\
\rho_{\omega}^{\mathcal{Z}}(b_1, \dots, b_{n - 1}) &= \kappa_{1_{\chi_\omega}}\left(Z_{\omega(1)}, R_{b_1}Z_{\omega(2)}, \dots, R_{b_{n - 1}}Z_{\omega(n)}\right) \in \B, \text{ and}\\
\eta_{\omega}^{\mathcal{Z}}(b_1, \dots, b_{n - 1}) &= \K_{1_{\chi_\omega}}\left(Z_{\omega(1)}, R_{b_1}Z_{\omega(2)}, \dots, R_{b_{n - 1}}Z_{\omega(n)}\right) \in \D.
\end{align*}

\item Otherwise, let $k_\ell = \min\{k \, \mid \, \omega(k) \in I\}$ and $k_r = \min\{k \, \mid \, \omega(k) \in J\}$. Then $\{k_\ell, k_r\} = \{1, k_0\}$ for some $k_0$. Define $\nu_{\omega}^{\mathcal{Z}}(b_1, \dots, b_{n - 1})$ and $\mu_{\omega}^{\mathcal{Z}}(b_1, \dots, b_{n - 1})$ to be
\[\bE\left(Z_{\omega(1)}C_{b_1}^{\omega(2)}Z_{\omega(2)}\cdots C_{b_{k_0 - 2}}^{\omega(k_0 - 1)}Z_{\omega(k_0 - 1)}Z_{\omega(k_0)}C_{b_{k_0 - 1}}^{\omega(k_0 + 1)}Z_{\omega(k_0 + 1)}\cdots C_{b_{n - 3}}^{\omega(n - 1)}Z_{\omega(n - 1)}C_{b_{n - 2}}^{\omega(n)}Z_{\omega(n)}C_{b_{n - 1}}^{\omega(n)}\right) \in \B\]
and
\[\bF\left(Z_{\omega(1)}C_{b_1}^{\omega(2)}Z_{\omega(2)}\cdots C_{b_{k_0 - 2}}^{\omega(k_0 - 1)}Z_{\omega(k_0 - 1)}Z_{\omega(k_0)}C_{b_{k_0 - 1}}^{\omega(k_0 + 1)}Z_{\omega(k_0 + 1)}\cdots C_{b_{n - 3}}^{\omega(n - 1)}Z_{\omega(n - 1)}C_{b_{n - 2}}^{\omega(n)}Z_{\omega(n)}C_{b_{n - 1}}^{\omega(n)}\right) \in \D\]
respectively, and define $\rho_{\omega}^{\mathcal{Z}}(b_1, \dots, b_{n - 1})$ and $\eta_{\omega}^{\mathcal{Z}}(b_1, \dots, b_{n - 1})$ to be
\[
\kappa_{1_{\chi_\omega}}\left(Z_{\omega(1)}, C_{b_1}^{\omega(2)}Z_{\omega(2)}, \dots, C_{b_{k_0 - 2}}^{\omega(k_0 - 1)}Z_{\omega(k_0 - 1)}, Z_{\omega(k_0)}, C_{b_{k_0 - 1}}^{\omega(k_0 + 1)}Z_{\omega(k_0 + 1)}, \dots, C_{b_{n - 3}}^{\omega(n - 1)}Z_{\omega(n - 1)}, C_{b_{n - 2}}^{\omega(n)}Z_{\omega(n)}C_{b_{n - 1}}^{\omega(n)}\right) \in \B
\]
and
\[\K_{1_{\chi_\omega}}\left(Z_{\omega(1)}, C_{b_1}^{\omega(2)}Z_{\omega(2)}, \dots, C_{b_{k_0 - 2}}^{\omega(k_0 - 1)}Z_{\omega(k_0 - 1)}, Z_{\omega(k_0)}, C_{b_{k_0 - 1}}^{\omega(k_0 + 1)}Z_{\omega(k_0 + 1)}, \dots, C_{b_{n - 3}}^{\omega(n - 1)}Z_{\omega(n - 1)}, C_{b_{n - 2}}^{\omega(n)}Z_{\omega(n)}C_{b_{n - 1}}^{\omega(n)}\right) \in \D\]
respectively, where
\[C_b^{\omega(k)} = \begin{cases}
L_b & \text{if } \omega(k) \in I\\
R_b & \text{if } \omega(k) \in J
\end{cases}.\]
\end{itemize}
\end{nota}

\begin{defn}\label{MCSeriesDefn}
Let $\Z = \{Z_i\}_{i \in I} \sqcup \{Z_j\}_{j \in J}$ be as above. The \textit{moment and cumulant series} of $\Z$ with respect to $(\bE, \bF)$ are the collections of maps
\begin{align*}
\nu^\Z &= \{\nu_\omega^{\Z}: \B^{n - 1} \to \B \, \mid \, n \geq 1, \omega: \{1, \dots, n\} \to I \sqcup J\},\\
\mu^\Z &= \{\mu_\omega^{\Z}: \B^{n - 1} \to \D \, \mid \, n \geq 1, \omega: \{1, \dots, n\} \to I \sqcup J\},
\end{align*}
and
\begin{align*}
\rho^\Z &= \{\rho_\omega^{\Z}: \B^{n - 1} \to \B \, \mid \, n \geq 1, \omega: \{1, \dots, n\} \to I \sqcup J\},\\
\eta^\Z &= \{\eta_\omega^{\Z}: \B^{n - 1} \to \D \, \mid \, n \geq 1, \omega: \{1, \dots, n\} \to I \sqcup J\},
\end{align*}
respectively. Note that if $n = 1$, then $\nu_\omega^\Z = \rho_\omega^\Z = \bE(Z_{\omega(1)})$ and $\mu_\omega^\Z = \eta_\omega^\Z = \bF(Z_{\omega(1)})$.
\end{defn}

\begin{lem}\label{Existence}
Let $I$ and $J$ be non-empty disjoint index sets, and let $\B$ and $\D$ be unital algebras such that $1 := 1_\D \in \B \subset \D$. For every $n \geq 1$ and $\omega: \{1, \dots, n\} \to I \sqcup J$, let $\Theta_\omega: \B^{n - 1} \to \B$ and $\Upsilon_\omega: \B^{n - 1} \to \D$ be linear in each coordinate. There exist a $\B$-$\B$-non-commutative probability space with a pair of $(\B, \D)$-valued expectations $(\A, \bE, \bF, \varepsilon)$ and elements $\{Z_i\}_{i \in I} \subset \A_\ell$ and $\{Z_j\}_{j \in J} \subset \A_r$ such that if $\mathcal{Z} = \{Z_i\}_{i \in I} \sqcup \{Z_j\}_{j \in J}$, then
\[\rho_\omega^{\mathcal{Z}}(b_1, \dots, b_{n - 1}) = \Theta_\omega(b_1, \dots, b_{n - 1}) \qand \eta_\omega^{\mathcal{Z}}(b_1, \dots, b_{n - 1}) = \Upsilon_\omega(b_1, \dots, b_{n - 1})\]
for all $n \geq 1$, $\omega: \{1, \dots, n\} \to I \sqcup J$, and $b_1, \dots, b_{n - 1} \in \B$.
\end{lem}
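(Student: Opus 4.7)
The plan is to construct the required quadruple $(\A, \bE, \bF, \varepsilon)$ in two stages. For the first stage, apply the operator-valued bi-free existence result \cite{S2015}*{Lemma 3.8} to the prescribed family $\Theta$ to obtain a $\B$-$\B$-non-commutative probability space $(\A, \bE, \varepsilon)$ together with elements $\{Z_i\}_{i \in I} \subset \A_\ell$ and $\{Z_j\}_{j \in J} \subset \A_r$ whose operator-valued bi-free cumulant series satisfies $\rho^\Z_\omega = \Theta_\omega$. It is convenient to take $\A$ to be the universal unital algebra generated by the $Z_i$'s, the $Z_j$'s, and $\varepsilon(\B \otimes \B^{\mathrm{op}})$ modulo the commutation relations defining $\A_\ell$ and $\A_r$, with $\bE$ defined on reduced monomials through the operator-valued bi-free moment-cumulant formula applied to $\Theta$.

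For the second stage, extend $(\A, \bE, \varepsilon)$ to the required quadruple by constructing $\bF: \A \to \D$ as follows. Declare the top-level operator-valued conditionally bi-free cumulants on the generators to be
\[\K_{1_{\chi_\omega}}\bigl(Z_{\omega(1)}, C_{b_1}^{\omega(2)} Z_{\omega(2)}, \dots, C_{b_{n-1}}^{\omega(n)} Z_{\omega(n)}\bigr) \, := \, \Upsilon_\omega(b_1, \dots, b_{n-1})\]
in the notation of Notation \ref{MCSeries}, and extend $\K_\pi$ to all $\pi \in \B\N\C(\chi)$ by combining these prescribed top-level values at exterior-block structures with the already-fixed values of $\kappa$ coming from $\Theta$ at interior-block structures, using the recursive recipe of Definition \ref{OpVCBFCumulants}. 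Then set
\[\bF(W_1 \cdots W_n) \, := \, \sum_{\pi \in \B\N\C(\chi)} \K_\pi(W_1, \dots, W_n)\]
on reduced monomials with $W_k \in \A_{\chi(k)}$, and extend linearly to obtain $\bF: \A \to \D$. Möbius inversion on $\B\N\C(\chi)$ then recovers $\eta^\Z_\omega = \Upsilon_\omega$ at the top-level partitions, as required.

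The principal obstacle is verifying that $\bF$ is well-defined on the universal algebra $\A$ and satisfies the $(\B, \D)$-expectation axioms $\bF(L_{b_1} R_{b_2} Z) = b_1 \bF(Z) b_2$ and $\bF(ZL_b) = \bF(ZR_b)$ of Definition \ref{BBncpsBD}. These identities reduce to showing that the formal sum $\sum_{\pi} \K_\pi$ absorbs left and right $\B$-operators in the required fashion, which follows from the fact that $(\kappa, \K)$ is operator-valued conditionally bi-multiplicative: conditions (1) and (2) of Definition \ref{BiMulti} hold in the strengthened form for every $\pi \in \B\N\C(\chi)$ (not merely for $1_\chi$) as established in Section \ref{sec:pairs-of-fns}, and these combine with the factorization rule (3) and the modified rule (4) of Definition \ref{CondBiMulti} to produce the desired compatibility. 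The careful bookkeeping of these reductions, partition by partition, is essentially the entire technical content of the proof, and is strictly analogous to the verification already carried out for the bi-free case in \cite{S2015}*{Lemma 3.8}.
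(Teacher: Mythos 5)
Your proposal follows the paper's proof essentially verbatim: both invoke \cite{S2015}*{Lemma 3.8} to realize $\Theta$ via a $\B$-$\B$-non-commutative probability space $(\A,\bE,\varepsilon)$, then define $\bF$ on reduced monomials as the sum over $\pi \in \B\N\C(\chi_\omega)$ of formally-built conditionally bi-multiplicative expressions using $\Upsilon$ on exterior blocks and $\Theta$ on interior blocks, and finally recover $\eta^\Z_\omega = \Upsilon_\omega$ by inverting the moment--cumulant recursion of Definition \ref{OpVCBFCumulants} with an induction over partitions. The one caution is that the $\B$-operator absorption identities needed for well-definedness of $\bF$ must be verified directly for the formally-defined quantities $\widehat{\Upsilon}_\pi$ (one cannot yet cite the conditional bi-multiplicativity of the pair $(\kappa,\K)$ of the target space, since that presupposes $\bF$ is already a legitimate expectation), but this is precisely the level of verification the paper itself leaves to the reader.
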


\begin{proof}
By the same construction presented in the proof of \cite{S2015}*{Lemma 3.8}, there exist a $\B$-$\B$-non-commutative probability space $(\A, \bE, \varepsilon)$ and $\mathcal{Z} = \{Z_i\}_{i \in I} \sqcup \{Z_j\}_{j \in J}$ with $\{Z_i\}_{i \in I} \subset \A_\ell$ and $\{Z_j\}_{j \in J} \subset \A_r$ such that
\[\rho_\omega^{\mathcal{Z}}(b_1, \dots, b_{n - 1}) = \Theta_\omega(b_1, \dots, b_{n - 1})\]
for all $n \geq 1$, $\omega: \{1, \dots, n\} \to I \sqcup J$, and $b_1, \dots, b_{n - 1} \in \B$.  Thus we need only define an expectation $\bF$ to produce the correct operator-valued conditionally bi-free cumulants.

For $n \geq 1$, $\omega: \{1, \dots, n\} \to I \sqcup J$, and $b_1, \dots, b_{n + 1} \in \B$, let
\[C_b^{\omega(k)} = \begin{cases}
L_b & \text{if } \omega(k) \in I\\
R_b & \text{if } \omega(k) \in J
\end{cases},\]
and define
\[\widehat{\Upsilon}_{1_{\chi_\omega}}\left(C_{b_1}^{\omega(1)}Z_{\omega(1)}, \dots, C_{b_{n - 1}}^{\omega(n - 1)}Z_{\omega(n - 1)}, C_{b_n}^{\omega(n)}Z_{\omega(n)}C_{b_{n + 1}}^{\omega(n)}\right) \in \D\]
like how $\widehat{\Theta}_{1_{\chi_\omega}}$ is defined in the proof of \cite{S2015}*{Lemma 3.8} using $\Upsilon_\omega$ instead of $\Theta_\omega$. Subsequently, for $\omega: \{1, \dots, n\} \to I \sqcup J$ and $\pi \in \B\N\C(\chi_\omega)$, define
\[\widehat{\Upsilon}_\pi\left(C_{b_1}^{\omega(1)}Z_{\omega(1)}, \dots, C_{b_{n - 1}}^{\omega(n - 1)}Z_{\omega(n - 1)}, C_{b_n}^{\omega(n)}Z_{\omega(n)}C_{b_{n + 1}}^{\omega(n)}\right) \in \D\]
by selecting one of the many possible ways to reduce an operator-valued conditionally bi-multiplicative function where $\widehat{\Theta}_{1_\chi}$ is used for interior blocks and $\widehat{\Upsilon}_{1_\chi}$ is used for exterior blocks.

As seen in the proof of \cite{S2015}*{Lemma 3.8}, every element in $\A$ is a linear combination of the form
\[C_{b_1}^{\omega(1)}Z_{\omega(1)}\cdots C_{b_n}^{\omega(n)}Z_{\omega(n)}L_bR_{b'} + \mathcal{I},\]
where $n \geq 0$, $\omega: \{1, \dots, n\} \to I \sqcup J$ when $n \geq 1$, $b_1, \dots, b_n, b, b' \in \B$, and $\mathcal{I}$ is some two-sided ideal. Define $\bF: \A \to \D$ by
\[\bF(L_bR_{b'} + \mathcal{I}) = bb'\]
for all $b, b' \in \B$, and
\[\bF\left(C_{b_1}^{\omega(1)}Z_{\omega(1)}\cdots C_{b_n}^{\omega(n)}Z_{\omega(n)}L_bR_{b'} + \mathcal{I}\right) = \sum_{\pi \in \B\N\C(\chi_\omega)}\widehat{\Upsilon}_\pi\left(C_{b_1}^{\omega(1)}Z_{\omega(1)}, \dots, C_{b_{n - 1}}^{\omega(n - 1)}Z_{\omega(n - 1)}, C_{b_n}^{\omega(n)}Z_{\omega(n)}C_{bb'}\right)\]
for all $n \geq 1$ and $\omega: \{1, \dots, n\} \to I \sqcup J$, where $C_{bb'} = L_{bb'}$ if $\omega(n) \in I$ and $C_{bb'} = R_{bb'}$ if $\omega(n) \in J$, and extend $\bF$ by linearity.
By construction and commutation in $\A$, one can verify that $\bF$ is well-defined and
\[\bF(L_bR_{b'}Z + \I) = b\bF(Z + \I)b' \qand \bF(ZL_b + \I) = \bF(ZR_b + \I)\]
for all $b, b' \in \B$ and $Z + \I \in \A$. Finally, since Definition \ref{OpVCBFCumulants} completely determines the operator-valued conditionally bi-free cumulants and by our definition of $\hat{\Upsilon}$ via a choice of operator-valued conditionally bi-multiplicative reduction, \cite{S2015}*{Lemma 3.8} with an induction argument together imply that if $\mathcal{Z} = \{Z_i\}_{i \in I} \sqcup \{Z_j\}_{j \in J}$, then
\[\eta_\omega^{\mathcal{Z}}(b_1, \dots, b_{n - 1}) = \Upsilon_\omega(b_1, \dots, b_{n - 1})\]
for all $n \geq 1$, $\omega: \{1, \dots, n\} \to I \sqcup J$, and $b_1, \dots, b_{n - 1} \in \B$.
\end{proof}

We are now ready to prove the main result of this section.

\begin{thm}\label{VanishingEquiv}
A family $\{(\A_{k, \ell}, \A_{k, r})\}_{k \in K}$ of pairs of $\B$-algebras in a $\B$-$\B$-non-commutative probability space with a pair of $(\B, \D)$-valued expectations $(\A, \mathbb{E}, \mathbb{F}, \varepsilon)$ is c-bi-free over $(\B, \D)$ if and only if for all $n \geq 2$, $\chi: \{1, \dots, n\} \to \{\ell, r\}$, $\omega: \{1, \dots, n\} \to K$, and $Z_k \in \A_{\omega(k), \chi(k)}$, we have
\[\kappa_{1_\chi}(Z_1, \dots, Z_n) = \K_{1_\chi}(Z_1, \dots, Z_n) = 0\]
whenever $\omega$ is not constant.
\end{thm}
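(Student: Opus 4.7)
The plan is to reduce the theorem to the moment characterization already established in Theorem \ref{MomentFormulae}, and then show that each moment equation is equivalent, via M\"obius inversion and the operator-valued bi-multiplicative (respectively conditionally bi-multiplicative) structure, to the corresponding vanishing of mixed cumulants. Thus I will prove the chain of equivalences
\[
\{(\A_{k,\ell},\A_{k,r})\}_{k\in K}\ \text{is c-bi-free}\ \Longleftrightarrow\ \eqref{EA-Moment}\ \text{and}\ \eqref{FA-Moment}\ \text{hold}\ \Longleftrightarrow\ \kappa_{1_\chi}=\K_{1_\chi}=0\ \text{for non-constant}\ \omega.
\]
The first equivalence is Theorem \ref{MomentFormulae}, so the real content lies in the second.

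For the $\kappa$-part, I will first handle the direction \eqref{EA-Moment} $\Rightarrow$ $\kappa_{1_\chi}=0$. Using the definition $\kappa_\pi=\sum_{\sigma\leq\pi}\E_\sigma\mu_{\B\N\C}(\sigma,\pi)$ to invert \eqref{EA-Moment}, a routine interchange of summations collapses via $\sum_{\pi:\,\tau\leq\pi\leq\sigma}\mu_{\B\N\C}(\pi,\sigma)=[\tau=\sigma]$ to yield $\bE(Z_1\cdots Z_n)=\sum_{\tau\leq\omega}\kappa_\tau(Z_1,\dots,Z_n)$. Comparing with $\bE(Z_1\cdots Z_n)=\E_{1_\chi}(Z_1,\dots,Z_n)=\sum_\tau \kappa_\tau(Z_1,\dots,Z_n)$ shows $\sum_{\tau\not\leq\omega}\kappa_\tau(Z_1,\dots,Z_n)=0$. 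I will then induct on $n$: the base case $n=2$ is immediate since the only $\tau\not\leq\omega$ is $1_\chi$, and for $n\geq 3$, operator-valued bi-multiplicativity of $\kappa$ together with the inductive hypothesis forces $\kappa_\tau=0$ for every $\tau\neq 1_\chi$ with $\tau\not\leq\omega$ (some block $V$ of $\tau$ has $|V|<n$ and $\omega|_V$ non-constant, so $\kappa_{1_{\chi|_V}}(\cdot)=0$, which produces $L_0$ or $R_0$ in the reduction), leaving $\kappa_{1_\chi}=0$. The reverse direction proceeds symmetrically: vanishing of mixed $\kappa$-cumulants plus operator-valued bi-multiplicativity forces $\kappa_\pi=0$ unless $\pi\leq\omega$, so $\E_{1_\chi}=\sum_{\pi\leq\omega}\kappa_\pi$, which re-expands into \eqref{EA-Moment} by a symmetric M\"obius calculation.

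For the $\K$-part, I will argue analogously but use the operator-valued conditionally bi-multiplicative pair $(\E,\F)$ and $(\kappa,\K)$. The key observation is that in the recursive definition of $\K_\pi$ (Definition \ref{OpVCBFCumulants}), interior blocks of $\pi$ contribute through $\kappa$ while exterior blocks contribute through $\K$; if any block $V$ of $\pi$ has $\omega|_V$ non-constant, the corresponding factor vanishes (by the already-established vanishing of $\kappa$ on interior blocks and the hypothesized vanishing of $\K$ on exterior blocks), forcing $\K_\pi=0$. Thus once the $\kappa$-equivalence is secured, we have $\K_\pi=0$ for all $\pi\not\leq\omega$, hence $\bF(Z_1\cdots Z_n)=\F_{1_\chi}=\sum_{\pi\leq\omega}\K_\pi$. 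To complete the backward direction I must show that this identity is equivalent to \eqref{FA-Moment}; conversely, from \eqref{FA-Moment} together with the representation $\sum_\pi \K_\pi=\F_{1_\chi}$ and induction on $n$, I extract $\K_{1_\chi}=0$ for non-constant $\omega$ by the same block-factorization argument.

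The main obstacle is the $\K$-to-diagram comparison: unlike the $\kappa$ case, the coefficients in \eqref{FA-Moment} are indexed by the richer lattice $\L\R^{\lat\capp}(\chi,\omega)$ of shaded lateral-capped diagrams rather than by $\B\N\C(\chi)$, and $\bF_D$ mixes $\bE$-values on non-top spines with $\bF$-values on top spines. Reconciling $\sum_{\pi\leq\omega}\K_\pi$ with $\sum_D C'_D \bF_D$ requires the diagrammatic manipulations from Section \ref{sec:moment-express}, in particular Lemma \ref{F-Moment} and the pairing/cancellation argument of Lemma \ref{ChangeCoeff}, applied in reverse: expanding each $\K_\pi$ into $\bF_{1_{\chi'}}$- and $\E_{1_{\chi'}}$-factors via operator-valued conditional bi-multiplicativity, and regrouping these factors by the shaded diagrams they correspond to. This is the operator-valued analogue of the scalar computation in \cite{GS2016}*{Theorem 4.8}, and the induction has to run simultaneously for $\kappa$ and $\K$ since at each stage the $\K$-reduction invokes $\kappa$-values on deeper blocks.
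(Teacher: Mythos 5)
Your forward implication (vanishing of mixed cumulants $\Rightarrow$ the moment formulae \eqref{EA-Moment} and \eqref{FA-Moment}, hence c-bi-freeness via Theorem \ref{MomentFormulae}) is essentially the paper's argument: one writes $\bF(Z_1\cdots Z_n)=\sum_{\pi\le\omega}\K_\pi$, expands each $\K_\pi$ recursively into products of single-pair moments, and matches coefficients against the $\L\R^{\lat\capp}(\chi,\omega)$-sum by observing that the coefficients are independent of $\B$ and $\D$ and invoking the scalar case \cite{GS2016}*{Lemma 4.13}; the $\kappa$-half is simply cited from \cite{CNS2015-2}*{Theorem 8.1.1} rather than re-derived by M\"obius inversion as you do. Where you genuinely diverge is the converse. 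The paper does not extract $\K_{1_\chi}=0$ directly from \eqref{FA-Moment} by induction; instead it uses Lemma \ref{Existence} to build a model family $Z_1',\dots,Z_n'$ whose mixed cumulants vanish by construction and whose single-pair distributions match those of $Z_1,\dots,Z_n$, applies the already-proved forward direction to conclude the two families have identical joint distributions, and then notes that moments determine cumulants. Your direct route is also viable, because the identity $\sum_{\pi\le\omega}\K_\pi=\sum_D C'_D\,\bF_D$ is an unconditional polynomial identity in the single-pair moments (it is exactly what the forward direction establishes), so subtracting it from $\bF(Z_1\cdots Z_n)=\sum_\pi\K_\pi$ and killing every $\pi\ne 1_\chi$ with $\pi\not\le\omega$ via the inductive hypothesis isolates $\K_{1_\chi}$; but you must then say explicitly that the coefficient identity needs no vanishing hypothesis, and your induction has to carry $\B$-operator-decorated entries such as $Z_pL_b$, since the block reductions insert them. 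The paper's model-construction buys a converse that never re-enters the diagram combinatorics, at the cost of the auxiliary realization result; yours stays inside the lattice calculus but is the more delicate bookkeeping exercise.
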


\begin{proof}
If all mixed cumulants vanish, then $\{(\A_{k, \ell}, \A_{k, r})\}_{k \in K}$ is bi-free over $\B$ so equation \eqref{EA-Moment} holds. To see that equation \eqref{FA-Moment} also holds, recall from \cite{GS2016}*{Subsection 4.2} that $\B\N\C(\chi, ie)$ denotes the set of all pairs $(\pi, \iota)$ where $\pi \in \B\N\C(\chi)$ is a bi-non-crossing partition and $\iota: \pi \to \{i, e\}$ is a function on the blocks of $\pi$. By Definitions \ref{CBFMomentPair} and \ref{OpVCBFCumulants}, and the assumption that all mixed cumulants vanish, we have
\[\bF(Z_1\cdots Z_n) = \F_{1_\chi}(Z_1, \dots, Z_n) = \sum_{\substack{\pi \in \B\N\C(\chi)\\\pi \leq \omega}}\K_\pi(Z_1, \dots, Z_n).\]
By applying Definition \ref{OpVCBFCumulants} recursively, we obtain that
\begin{equation}\label{VanishingExpansion}
\bF(Z_1\cdots Z_n) = \sum_{\substack{(\pi, \iota) \in \B\N\C(\chi, ie)\\\pi \leq \omega}}c(\chi, \omega; \pi, \iota)\Theta_{(\pi, \iota)}(Z_1, \dots, Z_n),
\end{equation}
where $c(\chi, \omega; \pi, \iota)\Theta_{(\pi, \iota)}(Z_1, \dots, Z_n)$ for $(\pi, \iota) \in \B\N\C(\chi, ie)$ is defined as follows: If there is an interior block $V$ of $\pi$ such that $\iota(V) = e$, then $c(\chi, \omega; \pi, \iota) = 0$. Otherwise, apply the recursive process using $\bE$ as in Definition \ref{E-pi} to the interior blocks of $\pi$, order the remaining $\chi$-intervals by $\prec_\chi$ as $V_1, \dots, V_m$, and define
\[\Theta_{(\pi, \iota)}(Z_1, \dots, Z_n) = \Theta_{\pi|_{V_1}}((Z_1, \dots, Z_n)|_{V_1})\cdots\Theta_{\pi|_{V_m}}((Z_1, \dots, Z_n)|_{V_m}),\]
where $\Theta_{\pi|_{V_j}} = \bE_{\pi|_{V_j}}$ if $\iota(V_j) = i$ and $\Theta_{\pi|_{V_j}} = \bF_{\pi|_{V_j}}$ if $\iota(V_j) = e$.

Notice that, as with the scalar-valued case (see \cite{GS2016}*{Remark 4.9}), $\Theta_{(\pi, \iota)}(Z_1, \dots, Z_n)$ and $\bF_D(Z_1, \dots, Z_n)$ agree for certain $(\pi, \iota) \in \B\N\C(\chi, ie)$ and $D \in \L\R^{\lat\capp}(\chi, \omega)$. Indeed, given $D \in \L\R^{\lat\capp}(\chi, \omega)$, defining $\pi$ via the blocks of $D$ and $\iota$ via $\iota(V) = e$ if the spine of $V$ reaches the top and $\iota(V) = i$ otherwise will produce such an equality.

If each $(\pi, \iota) \in \B\N\C(\chi, ie)$ with $\pi \leq \omega$ and $c(\chi, \omega; \pi, \iota) \neq 0$ corresponds to some $D \in \L\R^{\lat\capp}(\chi, \omega)$ in the sense as described above and 
\[
c(\chi, \omega; \pi, \iota) = \sum_{\substack{D' \in \L\R(\chi, \omega)\\D' \geq_{\lat\capp} D}}(-1)^{|D| - |D'|}
\]
for such $(\pi, \iota)$, then equations \eqref{FA-Moment}  and \eqref{VanishingExpansion} coincide implying that $\{(\A_{k, \ell}, \A_{k, r})\}_{k \in K}$ is c-bi-free over $(\B, \D)$ by Theorem \ref{MomentFormulae}. Since the property that $(\pi, \iota)$ corresponds to a $D \in \L\R^{\lat\capp}(\chi, \omega)$ and the value of $c(\chi, \omega; \pi, \iota)$ do not depend on the algebras $\B$ and $\D$, the result follows from the $\B = \D = \bC$ case by \cite{GS2016}*{Lemma 4.13}.

Conversely, if $\{(\A_{k, \ell}, \A_{k, r})\}_{k \in K}$ is c-bi-free over $(\B, \D)$, then equations \eqref{EA-Moment} and \eqref{FA-Moment} hold by Theorem \ref{MomentFormulae}. As shown in \cite{CNS2015-2}*{Theorem 8.1.1}, equation \eqref{EA-Moment} is equivalent to the vanishing of mixed operator-valued bi-free cumulants. Thus we need only show that mixed operator-valued conditionally bi-free cumulants vanish. For fixed $n \geq 2$, $\chi: \{1, \dots, n\} \to \{\ell, r\}$, $\omega: \{1, \dots, n\} \to K$, and $Z_k \in \A_{\omega(k), \chi(k)}$, construct a $\B$-$\B$-non-commutative probability space with a pair of $(\B, \D)$-valued expectations $(\A', \bE_{\A'}, \bF_{\A'}, \varepsilon')$, pairs of $\B$-algebras $\{(\A'_{k, \ell}, \A'_{k, r})\}_{k \in K}$, and elements $Z'_k \in \A'_{\omega(k), \chi(k)}$ such that
\begin{itemize}
\item for each $k \in \{1, \dots, n\}$, $\{Z'_j \, \mid \, \omega(j) = \omega(k), \chi(j) = \chi(k)\}$ generated $\A'_{\omega(k), \chi(k)}$,

\item any joint operator-valued conditionally bi-free cumulant involving $Z'_1, \dots, Z'_n$ containing a pair $Z'_{k_1}, Z'_{k_2}$ with $\omega(k_1) \neq \omega(k_2)$ is zero, and

\item for each $k \in \{1, \dots, n\}$, the joint distribution of $\{Z'_j \, \mid \, \omega(j) = \omega(k)\}$ with respect to $(\bE_{\A'}, \bF_{\A'})$ equals the joint distribution of $\{Z_j \, \mid \, \omega(j) = \omega(k)\}$ with respect to $(\bE, \bF)$.
\end{itemize}
The above is possible via Lemma \ref{Existence} by defining the operator-valued bi-free and conditionally bi-free cumulants appropriately.

By construction, $Z'_1, \dots, Z'_n$ have vanishing mixed cumulants and hence satisfy equations \eqref{EA-Moment} and \eqref{FA-Moment} by the first part of the proof. However, since for each $k \in \{1, \dots, n\}$, the joint distribution of $\{Z'_j \, \mid \, \omega(j) = \omega(k)\}$ with respect to $(\bE_{\A'}, \bF_{\A'})$ equals the joint distribution of $\{Z_j \, \mid \, \omega(j) = \omega(k)\}$ with respect to $(\bE, \bF)$, equations \eqref{EA-Moment} and \eqref{FA-Moment} imply that the joint distribution of $Z_1, \dots, Z_n$ with respect to $(\bE, \bF)$ equals the joint distribution of $Z'_1, \dots, Z'_n$ with respect to $(\bE_{\A'}, \bF_{\A'})$. Since the operator-valued bi-free and conditionally bi-free moments completely determine the operator-valued bi-free and conditionally bi-free cumulants, and since $Z'_1, \dots, Z'_n$ have vanishing mixed cumulants, the result follows.
\end{proof}

\section{Additional properties of c-bi-free independence with amalgamation}\label{sec:additional}

In this section, we collect a list of additional properties of c-bi-free independence with amalgamation and operator-valued conditionally bi-free cumulants. All of the results below are analogues of known results in the current framework with essentially the same proofs. We begin by recalling the following notation from \cite{CNS2015-2}*{Notation 6.3.1}.

\begin{nota}
Let $\chi: \{1, \dots, n\} \to \{\ell, r\}$, $\pi \in \B\N\C(\chi)$, and $q \in \{1, \dots, n\}$. We denote by $\chi|_{\setminus q}$ the restriction of $\chi$ to the set $\{1, \dots, n\} \setminus q$. If $q \neq n$ and $\chi(q) = \chi(q + 1)$, define $\pi|_{q = q + 1} \in \B\N\C(\chi|_{\setminus q})$ to be the bi-non-crossing partition which results from identifying $q$ and $q + 1$ in $\pi$ (i.e., if $q$ and $q + 1$ are in the same block, then $\pi|_{q = q + 1}$ is obtained from $\pi$ by just removing $q$ from the block in which $q$ occurs, while if $q$ and $q + 1$ are in different blocks, then $\pi|_{q = q + 1}$ is obtained from $\pi$ by merging the two blocks and then removing $q$).
\end{nota}

\subsection{Vanishing of operator-valued cumulants}

The following demonstrates that, like with many other kinds of cumulants, the operator-valued conditionally bi-free cumulants of order at least two vanish if at least one input is a $\B$-operator.

\begin{prop}
Let $(\A, \mathbb{E}, \mathbb{F}, \varepsilon)$ be a $\mathcal{B}$-$\mathcal{B}$-non-commutative probability space with a pair of $(\B, \D)$-valued expectations, $\chi: \{1, \dots, n\} \to \{\ell, r\}$ with $n \geq 2$, and $Z_k \in \A_{\chi(k)}$. If there exist $q \in \{1, \dots, n\}$ and $b \in \B$ such that $Z_q = L_b$ if $\chi(q) = \ell$ or $Z_q = R_b$ if $\chi(q) = r$, then
\[\kappa_{1_\chi}(Z_1, \dots, Z_n) = \mathcal{K}_{1_\chi}(Z_1, \dots, Z_n) = 0.\]
\end{prop}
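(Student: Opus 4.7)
The $\kappa$-vanishing assertion is the operator-valued bi-free statement from \cite{CNS2015-2}, so the work lies in the $\K$-vanishing. The plan is to proceed by strong induction on $n \geq 2$, handling $\chi(q) = \ell$ with $Z_q = L_b$ (the $\chi(q) = r$ case is symmetric). The base case $n = 2$ is a direct computation: $\B\N\C(\chi)$ contains only $1_\chi$ and the two-singleton partition, and one checks case-by-case on $\chi \in \{(\ell, \ell), (\ell, r), (r, \ell), (r, r)\}$ that the two contributions to $\K_{1_\chi}(Z_1, Z_2)$ cancel, using $\bF(L_b) = b$ and $\bF(L_b Z) = b \bF(Z)$ from Definition \ref{BBncpsBD}.

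For the inductive step, expand
\[\K_{1_\chi}(Z_1, \dots, Z_n) = \F_{1_\chi}(Z_1, \dots, Z_n) - \sum_{\substack{\pi \in \B\N\C(\chi) \\ \pi \neq 1_\chi}} \K_\pi(Z_1, \dots, Z_n),\]
and for each $\pi \neq 1_\chi$ inspect the block $V$ of $\pi$ that contains $q$. Unwinding Definition \ref{OpVCBFCumulants}(2): if $V$ is an interior block of its $\chi$-interval component $V_k$ then $\kappa_{\chi|_V}$ appears as a factor; if $V$ is the outermost block $V_k'$ of some $V_k$ then $\K_{\chi|_{V_k'}}$ appears as a factor. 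When $|V| \geq 2$ both cases vanish, the first by the $\kappa$-vanishing assertion and the second by the inductive hypothesis (since $|V_k'| < n$ whenever $\pi \neq 1_\chi$). Hence only partitions in which $\{q\}$ itself is a block of $\pi$ survive. For each such $\pi$ the block $\{q\}$ contributes either $\kappa_{1_{\chi|_{\{q\}}}}(L_b) = \bE(L_b) = b$ (interior case) or $\K_{1_{\chi|_{\{q\}}}}(L_b) = \bF(L_b) = b$ (exterior case), and by the recursion in Definition \ref{OpVCBFCumulants}(2) this factor $b$ is absorbed as an $L_b$ or $R_b$ attached to a neighbour of position $q$ in the reduced partition $\pi' = \pi \setminus \{q\} \in \B\N\C(\chi|_{\setminus q})$.

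Re-indexing the surviving sum by $\pi' \in \B\N\C(\chi|_{\setminus q})$ gives $\sum_{\pi'} \K_{\pi'}(\tilde Z_1, \dots, \tilde Z_{n-1})$ for a modified sequence $(\tilde Z_k)$ in which $Z_q$ has been dropped and one neighbouring entry carries the absorbed $L_b$. Applying Definition \ref{OpVCBFCumulants}(3) in reverse at size $n - 1$, this sum equals $\F_{1_{\chi|_{\setminus q}}}(\tilde Z_1, \dots, \tilde Z_{n-1}) = \bF(\tilde Z_1 \cdots \tilde Z_{n-1})$. On the other hand, $\F_{1_\chi}(Z_1, \dots, L_b, \dots, Z_n) = \bF(Z_1 \cdots L_b \cdots Z_n)$ equals the same expression because in $\A$ the $\B$-operator $L_b$ can be commuted and absorbed into its neighbour. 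Subtracting these two equal quantities yields $\K_{1_\chi}(Z_1, \dots, L_b, \dots, Z_n) = 0$.

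The main obstacle is this final matching: verifying that the $b$'s produced by singleton $\{q\}$-blocks on the cumulant side aggregate, under the recursive absorption prescribed by Definition \ref{OpVCBFCumulants}(2), into the \emph{same} modified sequence $(\tilde Z_k)$ that appears when $L_b$ is pushed into its neighbour on the moment side. This requires a case analysis according to the position of $q$ and its immediate neighbours under $\prec_\chi$, together with a systematic use of conditions (1) and (2) of Definition \ref{BiMulti} for $\F_{1_\chi}$ and their extended versions for $\K_\pi$ established in the previous section.
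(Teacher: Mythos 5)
Your proposal is correct and follows essentially the same route as the paper: induction on $n$, using the $\kappa$-vanishing and the inductive hypothesis to eliminate every partition in which the block containing $q$ has size at least two, then re-indexing the surviving sum over $\B\N\C(\chi|_{\setminus q})$ and matching it against $\F_{1_\chi}$. The ``final matching'' you flag is handled in the paper by always absorbing $L_b$ into the nearest preceding position $p$ with $\chi(p)=\chi(q)$ (splitting into the cases $p\neq-\infty$ and $p=-\infty$), which is precisely the canonical normalization supplied by the extended conditions $(1)$ and $(2)$ for $\K_\pi$ that you invoke.
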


\begin{proof}
The assertion that $\kappa_{1_\chi}(Z_1, \dots, Z_n) = 0$ was proved in \cite{CNS2015-2}*{Proposition 6.4.1}, and the other assertion will be proved by induction with the base case easily verified by direct computations.

For the inductive step, suppose the assertion is true for all $\chi: \{1, \dots, m\} \to \{\ell, r\}$ with $2 \leq m \leq n - 1$. Fix $\chi: \{1, \dots, n\} \to \{\ell, r\}$ and $Z_k \in \A_{\chi(k)}$. Suppose there exist $q \in \{1, \dots, n\}$ and $b \in \B$ such that $\chi(q) = \ell$ and $Z_q = L_b$ (the case $\chi(q) = r$ and $Z_q = R_b$ is similar). Let
\[p = \max\{k \in \{1, \dots, n\} \, \mid \, \chi(k) = \ell, k < q\}.\]
There are two cases. If $p \neq -\infty$, then by the first assertion and the induction hypothesis,
\begin{align*}
\K_{1_\chi}(Z_1, \dots, Z_n) &= \F_{1_\chi}(Z_1, \dots, Z_n) - \sum_{\substack{\pi \in \B\N\C(\chi)\\\pi \neq 1_\chi}}\K_\pi(Z_1, \dots, Z_n)\\
&= \F_{1_\chi}(Z_1, \dots, Z_n) - \sum_{\substack{\pi \in \B\N\C(\chi)\\\{q\} \in \pi}}\K_\pi(Z_1, \dots, Z_{q - 1}, L_b, Z_{q + 1}, \dots, Z_n)\\
&= \F_{1_\chi}(Z_1, \dots, Z_n) - \sum_{\sigma \in \B\N\C(\chi|_{\setminus q})}\K_\sigma(Z_1, \dots, Z_pL_b, Z_{p + 1}, \dots, Z_{q - 1}, Z_{q + 1}, \dots, Z_n)
\end{align*}
by properties of $(\kappa, \K)$. On the other hand, we have
\begin{align*}
\F_{1_\chi}(Z_1, \dots, Z_n) &= \bF(Z_1\cdots Z_{q - 1}L_bZ_{q + 1}\cdots Z_n)\\
&= \bF(Z_1\cdots Z_pL_bZ_{p + 1}\cdots Z_{q - 1}Z_{q + 1}\cdots Z_n)\\
&= \F_{1_{\chi|_{\setminus q}}}(Z_1, \dots, Z_pL_b, Z_{p + 1}, \dots, Z_{q - 1}, Z_{q + 1}, \dots, Z_n)\\
&= \sum_{\sigma \in \B\N\C(\chi|_{\setminus q})}\K_\sigma(Z_1, \dots, Z_pL_b, Z_{p + 1}, \dots, Z_{q - 1}, Z_{q + 1}, \dots, Z_n),
\end{align*}
thus the assertion is true in this case. If $p = -\infty$, then by the first assertion and the induction hypothesis,
\begin{align*}
\K_{1_\chi}(Z_1, \dots, Z_n) &= \F_{1_\chi}(Z_1, \dots, Z_n) - \sum_{\substack{\pi \in \B\N\C(\chi)\\\pi \neq 1_\chi}}\K_\pi(Z_1, \dots, Z_n)\\
&= \F_{1_\chi}(Z_1, \dots, Z_n) - \sum_{\substack{\pi \in \B\N\C(\chi)\\\{q\} \in \pi}}\K_\pi(Z_1, \dots, Z_{q - 1}, L_b, Z_{q + 1}, \dots, Z_n)\\
&= \F_{1_\chi}(Z_1, \dots, Z_n) - \sum_{\sigma \in \B\N\C(\chi|_{\setminus q})}b\K_\sigma(Z_1, \dots, Z_{q - 1}, Z_{q + 1}, \dots, Z_n)
\end{align*}
by properties of $(\kappa, \K)$ as $q = \min_{\prec_\chi}(\{1, \dots, n\})$ in this case. On the other hand, we have
\begin{align*}
\F_{1_\chi}(Z_1, \dots, Z_n) &= \bF(Z_1\cdots Z_{q - 1}L_bZ_{q + 1}\cdots Z_n)\\
&= \bF(L_bZ_1\cdots Z_{q - 1}Z_{q + 1}\cdots Z_n)\\
&= b\F_{1_{\chi|_{\setminus q}}}(Z_1, \dots, Z_{q - 1}, Z_{q + 1}, \dots, Z_n)\\
&= \sum_{\sigma \in \B\N\C(\chi|_{\setminus q})}b\K_\sigma(Z_1, \dots, Z_{q - 1}, Z_{q + 1}, \dots, Z_n),
\end{align*}
thus the assertion is true in this case as well.
\end{proof}

\subsection{Operator-valued cumulants of products}

Next, we analyze operator-valued conditionally bi-free cumulants involving products of operators.

\begin{lem}
Let $(\A, \bE, \bF, \varepsilon)$ be a $\B$-$\B$-non-commutative probability space with a pair of $(\B, \D)$-valued expectations. If $\chi: \{1, \dots, n\} \to \{\ell, r\}$, $Z_k \in \A_{\chi(k)}$, and $q \in \{1, \dots, n - 1\}$ with $\chi(q) = \chi(q + 1),$ then
\[\K_\pi(Z_1, \dots, Z_{q - 1}, Z_qZ_{q + 1}, Z_{q + 2}, \dots, Z_n) = \sum_{\substack{\sigma \in \B\N\C(\chi)\\\sigma|_{q = q + 1} = \pi}}\K_\sigma(Z_1, \dots, Z_n)\]
for all $\pi \in \B\N\C(\chi|_{\setminus q})$.
\end{lem}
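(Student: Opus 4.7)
The plan is to prove the identity by induction on $n$, treating separately the cases $\pi = 1_{\chi|_{\setminus q}}$ and $\pi \neq 1_{\chi|_{\setminus q}}$, and freely invoking the analogous identity for the operator-valued bi-free cumulant function $\kappa$ established in \cite{CNS2015-2}*{Section 6.4}. For the base case $n = 2$, $q = 1$, both sides reduce directly from the defining equation $\K_{1_{\chi|_{\setminus 1}}}(Z_1 Z_2) = \bF(Z_1 Z_2)$ together with the moment-cumulant recursion on $\B\N\C(\chi)$.

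For the inductive step, I would first handle $\pi \neq 1_{\chi|_{\setminus q}}$. By Definition \ref{OpVCBFCumulants}(2), $\pi$ decomposes into $\chi|_{\setminus q}$-intervals $V_1, \dots, V_m$ as in Remark \ref{Decomposition}, yielding
\[\K_\pi(Z_1, \dots, Z_q Z_{q+1}, \dots, Z_n) = \prod_{k=1}^m \K_{\pi|_{V_k}}\bigl((Z_1, \dots, Z_q Z_{q+1}, \dots, Z_n)|_{V_k}\bigr),\]
and each factor $\K_{\pi|_{V_k}}$ is further reduced, via successive extraction of interior blocks of $\pi|_{V_k}$, to an expression built from values $\kappa_{1_{\chi'}}$ on interior pieces and a single $\K_{1_{\chi''}}$ on the exterior block $V'_k \subset V_k$. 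The merged position lies in exactly one block of $\pi$; applying the induction hypothesis for $\K$ when that block is exterior, or the known analogue for $\kappa$ when it is interior, rewrites the corresponding factor as a sum over $\sigma$'s while leaving all other factors unchanged. The partitions $\sigma \in \B\N\C(\chi)$ with $\sigma|_{q = q + 1} = \pi$ are in natural bijection with the local choices of how $q$ and $q + 1$ sit relative to that one affected block (either both in the same block of $\sigma$, or split between two blocks that merge upon identifying $q$ with $q + 1$), and this bijection matches the two sums term by term.

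For $\pi = 1_{\chi|_{\setminus q}}$, I would use the moment-cumulant recursion
\[\K_{1_{\chi|_{\setminus q}}}(Z_1, \dots, Z_q Z_{q+1}, \dots, Z_n) = \F_{1_{\chi|_{\setminus q}}}(Z_1, \dots, Z_q Z_{q+1}, \dots, Z_n) - \sum_{\substack{\pi' \in \B\N\C(\chi|_{\setminus q}) \\ \pi' \neq 1_{\chi|_{\setminus q}}}} \K_{\pi'}(Z_1, \dots, Z_q Z_{q+1}, \dots, Z_n).\]
Since $\F_{1_{\chi|_{\setminus q}}}(Z_1, \dots, Z_q Z_{q+1}, \dots, Z_n) = \bF(Z_1 \cdots Z_n) = \F_{1_\chi}(Z_1, \dots, Z_n) = \sum_{\sigma \in \B\N\C(\chi)} \K_\sigma(Z_1, \dots, Z_n)$, and since the previous case rewrites the subtracted sum as $\sum_{\sigma : \sigma|_{q = q + 1} \neq 1_{\chi|_{\setminus q}}} \K_\sigma(Z_1, \dots, Z_n)$, subtracting leaves exactly the claimed identity. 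The main obstacle is the bookkeeping inside the $\pi \neq 1_{\chi|_{\setminus q}}$ case: one must verify in each sub-case arising from Definition \ref{OpVCBFCumulants}(2) (according to whether the affected block is interior or exterior, and according to its position within the recursive reduction) that the natural bijection between $\{\sigma \in \B\N\C(\chi) : \sigma|_{q = q + 1} = \pi\}$ and the local data around the affected block is compatible with the multiplicative decomposition of $\K_\pi$.
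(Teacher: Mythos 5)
Your proposal is correct and follows essentially the same route as the paper: induction on $n$, with the case $\pi \neq 1_{\chi|_{\setminus q}}$ handled by the induction hypothesis together with operator-valued conditional bi-multiplicativity and the known analogue for $\kappa$ from \cite{CNS2015-2}, and the case $\pi = 1_{\chi|_{\setminus q}}$ obtained by the identical telescoping computation with the moment--cumulant recursion. The only difference is that you spell out the block-splitting bijection behind the $\pi \neq 1_{\chi|_{\setminus q}}$ step, which the paper compresses into a single sentence.
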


\begin{proof}
We proceed by induction on $n$. If $n = 1$, there is nothing to check. If $n = 2$, then
\[\K_{0_\chi|_{1 = 2}}(Z_1Z_2) = \K_{1_\chi|_{1 = 2}}(Z_1Z_2) = \F_{1_\chi|_{1 = 2}}(Z_1Z_2) = \F_{1_\chi}(Z_1, Z_2) = \K_{0_\chi}(Z_1, Z_2) + \K_{1_\chi}(Z_1, Z_2)\]
as required. Suppose the assertion holds for $n - 1$, and note from \cite{CNS2015-2}*{Theorem 6.3.5} that the analogous result also holds for the operator-valued bi-free cumulant function $\kappa$. Using the induction hypothesis and the operator-valued conditionally bi-multiplicativity of $(\kappa, \K)$, we see for all $\pi \in \B\N\C(\chi|_{\setminus q}) \setminus \{1_{\chi|_{\setminus q}}\}$ that
\[\K_\pi(Z_1, \dots, Z_{q - 1}, Z_qZ_{q + 1}, Z_{q + 2}, \dots, Z_n) = \sum_{\substack{\sigma \in \B\N\C(\chi)\\\sigma|_{q = q + 1} = \pi}}\K_\sigma(Z_1, \dots, Z_n).\]
Hence,
\begin{align*}
&\K_{1_{\chi|_{\setminus q}}}(Z_1, \dots, Z_{q - 1}, Z_qZ_{q + 1}, Z_{q + 2}, \dots, Z_n)\\
&= \F_{1_{\chi|_{\setminus q}}}(Z_1, \dots, Z_{q - 1}, Z_qZ_{q + 1}, Z_{q + 2}, \dots, Z_n) - \sum_{\substack{\pi \in \B\N\C(\chi|_{\setminus q})\\\pi \neq 1_{\chi|_{\setminus q}}}}\K_\pi(Z_1, \dots, Z_{q - 1}, Z_qZ_{q + 1}, Z_{q + 2}, \dots, Z_n)\\
&= \F_{1_\chi}(Z_1, \dots, Z_n) - \sum_{\substack{\pi \in \B\N\C(\chi|_{\setminus q})\\\pi \neq 1_{\chi|_{\setminus q}}}}\sum_{\substack{\sigma \in \B\N\C(\chi)\\\sigma|_{q = q + 1} = \pi}}\K_\sigma(Z_1, \dots, Z_n)\\
&= \sum_{\sigma \in \B\N\C(\chi)}\K_\sigma(Z_1, \dots, Z_n) - \sum_{\substack{\sigma \in \B\N\C(\chi)\\\sigma|_{q = q + 1} \neq 1_{\chi|_{\setminus q}}}}\K_\sigma(Z_1, \dots, Z_n)\\
&= \sum_{\substack{\sigma \in \B\N\C(\chi)\\\sigma|_{q = q + 1} = 1_{\chi|_{\setminus q}}}}\K_\sigma(Z_1, \dots, Z_n),
\end{align*}
completing the inductive step.
\end{proof}

Given two partitions $\pi, \sigma \in \B\N\C(\chi)$, let $\pi \vee \sigma$ denote the smallest partition in $\B\N\C(\chi)$ greater than $\pi$ and $\sigma$. Furthermore, suppose $m, n \geq 1$ with $m < n$ are fixed, and consider a sequence of integers
\[0 = k(0) < k(1) < \cdots < k(m) = n.\]
For $\chi: \{1, \dots, m\} \to \{\ell, r\}$, define $\widehat{\chi}: \{1, \dots, n\} \to \{\ell, r\}$ by
\[\widehat{\chi}(q) = \chi(p_q),\]
where $p_q$ is the unique number in $\{1, \dots, m\}$ such that $k(p_q - 1) < q \leq k(p_q)$. Let $\widehat{0_\chi}$ be the partition of $\{1, \dots, n\}$ with blocks $\{\{k(p - 1) + 1, \dots, k(p)\}\}_{p = 1}^m$. Recursively applying the previous lemma along with \cite{CNS2015-2}*{Theorem 9.1.5}  yields the following operator-valued analogue of \cite{GS2016}*{Theorem 4.22}.

\begin{thm}
Let $(\A, \bE, \bF, \varepsilon)$ be a $\B$-$\B$-non-commutative probability space with a pair of $(\B, \D)$-valued expectations. Under the above notation, we have
\[\K_{1_\chi}\left(Z_1\cdots Z_{k(1)}, Z_{k(1) + 1}\cdots Z_{k(2)}, \dots, Z_{k(m - 1) + 1}\cdots Z_{k(m)}\right) = \sum_{\substack{\sigma \in \B\N\C(\widehat{\chi})\\\sigma \vee \widehat{0_\chi} = 1_{\widehat{\chi}}}}\K_\sigma(Z_1, \dots, Z_n)\]
for all $\chi: \{1, \dots, m\} \to \{\ell, r\}$ and $Z_k \in \A_{\widehat{\chi}(k)}$.
\end{thm}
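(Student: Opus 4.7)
The plan is to iterate the previous lemma $n-m$ times to split each product $Y_p := Z_{k(p-1)+1}\cdots Z_{k(p)}$ back into its individual factors, and then to identify the resulting index set with $\{\sigma \in \B\N\C(\widehat{\chi}) \, \mid \, \sigma \vee \widehat{0_\chi} = 1_{\widehat{\chi}}\}$ by appealing to a combinatorial lattice identity that is already used in \cite{CNS2015-2}*{Theorem 9.1.5} for the operator-valued bi-free cumulants.

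First I would make the preliminary observation that for each $p \in \{1,\ldots,m\}$ and each $q \in \{k(p-1)+1,\ldots, k(p)-1\}$, the definition of $\widehat{\chi}$ forces $\widehat{\chi}(q) = \widehat{\chi}(q+1) = \chi(p)$. Thus the hypothesis $\widehat{\chi}(q) = \widehat{\chi}(q+1)$ required by the previous lemma is satisfied whenever $q$ and $q+1$ lie in a common block of $\widehat{0_\chi}$. Starting from $\K_{1_\chi}(Y_1,\ldots,Y_m)$ and applying the previous lemma $k(1)-1$ times to unfold $Y_1$ into its individual factors, then $k(2)-k(1)-1$ times to unfold $Y_2$, and so on, we obtain an identity of the form
\[
\K_{1_\chi}(Y_1,\ldots,Y_m) \; = \; \sum_{\sigma} \K_\sigma(Z_1,\ldots,Z_n),
\]
where $\sigma$ ranges over those $\sigma \in \B\N\C(\widehat{\chi})$ whose successive identification of the adjacent positions within each block $\{k(p-1)+1,\ldots,k(p)\}$ of $\widehat{0_\chi}$ collapses $\sigma$ to $1_\chi$.

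Finally I would invoke the following standard fact about the lattice of bi-non-crossing partitions (the bi-non-crossing analogue of a classical free-probability identity, which is the lattice-theoretic content behind \cite{CNS2015-2}*{Theorem 9.1.5}): for $\sigma \in \B\N\C(\widehat{\chi})$, the partition obtained from $\sigma$ by successively contracting the blocks of $\widehat{0_\chi}$ to single points equals $1_\chi$ if and only if $\sigma \vee \widehat{0_\chi} = 1_{\widehat{\chi}}$ in $\B\N\C(\widehat{\chi})$. Substituting this reindexing into the identity above yields precisely
\[
\K_{1_\chi}(Y_1,\ldots,Y_m) \; = \; \sum_{\substack{\sigma \in \B\N\C(\widehat{\chi})\\\sigma \vee \widehat{0_\chi} = 1_{\widehat{\chi}}}} \K_\sigma(Z_1,\ldots,Z_n).
\]

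The main obstacle is keeping track of the evolving index set as the previous lemma is applied repeatedly: each application replaces a partition on one set of indices by the preimage under a single contraction, and one must check that the composition of these preimages matches exactly the join condition $\sigma \vee \widehat{0_\chi} = 1_{\widehat{\chi}}$. However, this bookkeeping is essentially identical to the one carried out for $\kappa$ in \cite{CNS2015-2}*{Theorem 9.1.5}, so no novel combinatorial argument is needed; the result follows by transcribing the bi-free argument with $\K$ in place of $\kappa$, using the previous lemma at each inductive step.
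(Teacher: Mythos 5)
Your proposal is correct and matches the paper's intended argument exactly: the paper offers no written proof beyond the remark that the theorem follows by ``recursively applying the previous lemma along with \cite{CNS2015-2}*{Theorem 9.1.5},'' and your plan -- iterating the product lemma $n-m$ times to unfold each $Y_p$ and then reindexing the resulting contraction condition as the join condition $\sigma \vee \widehat{0_\chi} = 1_{\widehat{\chi}}$ via the lattice identity underlying \cite{CNS2015-2}*{Theorem 9.1.5} -- is precisely that recursion made explicit. Your preliminary check that $\widehat{\chi}(q) = \widehat{\chi}(q+1)$ within each block of $\widehat{0_\chi}$ is the right hypothesis verification, and the bookkeeping you describe is the standard one.
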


\subsection{Operator-valued conditionally bi-moment and bi-cumulant pairs}

In \cite{S1998}*{Subsection 3.2}, the classes of operator-valued moment and cumulant functions were introduced as a tool to calculate moment expressions of elements in amalgamated free products. The c-free extension (in the special case $\B = \bC$) was achieved in \cite{M2002}*{Section 3} and the bi-free analogue was obtained in \cite{CNS2015-2}*{Subsection 6.3}. In this subsection, we extend the notions of operator-valued bi-moment and bi-cumulant functions from \cite{CNS2015-2}*{Definition 6.3.2} to pairs of functions.

\begin{defn}\label{CondBiMC}
Let $(\A, \bE, \bF, \varepsilon)$ be a $\B$-$\B$-non-commutative probability space with a pair of $(\B, \D)$-valued expectations, and let
\[\phi: \bigcup_{n \geq 1}\bigcup_{\chi: \{1, \dots, n\} \to \{\ell, r\}}\B\N\C(\chi) \times \A_{\chi(1)} \times \cdots \times \A_{\chi(n)} \to \B\]
and
\[\Phi: \bigcup_{n \geq 1}\bigcup_{\chi: \{1, \dots, n\} \to \{\ell, r\}}\B\N\C(\chi) \times \A_{\chi(1)} \times \cdots \times \A_{\chi(n)} \to \D\]
be an operator-valued conditionally bi-multiplicative pair.
\begin{enumerate}[$\qquad(1)$]
\item We say that $(\phi, \Phi)$ is an \textit{operator-valued conditionally bi-moment pair} if whenever $\chi: \{1, \dots, n\} \to \{\ell, r\}$ is such that there exists a $q \in \{1, \dots, n - 1\}$ with $\chi(q) = \chi(q + 1)$, then
\[\phi_{1_{\chi|_{\setminus q}}}(Z_1, \dots, Z_{q - 1}, Z_qZ_{q + 1}, Z_{q + 2}, \dots, Z_n) = \phi_{1_\chi}(Z_1, \dots, Z_n)\]
and
\[\Phi_{1_{\chi|_{\setminus q}}}(Z_1, \dots, Z_{q - 1}, Z_qZ_{q + 1}, Z_{q + 2}, \dots, Z_n) = \Phi_{1_\chi}(Z_1, \dots, Z_n)\]
for all $Z_k \in \mathcal{A}_{\chi(k)}$.

\item We say that $(\phi, \Phi)$ is an \textit{operator-valued conditionally bi-cumulant pair} if whenever $\chi: \{1, \dots, n\} \to \{\ell, r\}$ is such that there exists a $q \in \{1, \dots, n - 1\}$ with $\chi(q) = \chi(q + 1)$, then
\[\phi_{1_{\chi|_{\setminus q}}}(Z_1, \dots, Z_{q - 1}, Z_qZ_{q + 1}, Z_{q + 2}, \dots, Z_n) = \phi_{1_\chi}(Z_1, \dots, Z_n) + \sum_{\substack{\pi \in \B\N\C(\chi)\\|\pi| = 2, q \not\sim_\pi q + 1}}\phi_\pi(Z_1, \dots, Z_n)\]
and
\[\Phi_{1_{\chi|_{\setminus q}}}(Z_1, \dots, Z_{q - 1}, Z_qZ_{q + 1}, Z_{q + 2}, \dots, Z_n) = \Phi_{1_\chi}(Z_1, \dots, Z_n) + \sum_{\substack{\pi \in \B\N\C(\chi)\\|\pi| = 2, q \not\sim_\pi q + 1}}\Phi_\pi(Z_1, \dots, Z_n)\]
for all $Z_k \in \mathcal{A}_{\chi(k)}$, where $\Phi_\pi(Z_1, \dots, Z_n)$ is defined by the operator-valued conditionally bi-multiplicativity of $(\phi, \Phi)$ using $\phi$ for an interior block and $\Phi$ for an exterior block.
\end{enumerate}
\end{defn}

The following demonstrates that the two notions of pairs of functions are naturally related by summing over bi-non-crossing partitions.

\begin{thm}
Let $(\A, \mathbb{E}, \mathbb{F}, \varepsilon)$ be a $\mathcal{B}$-$\mathcal{B}$-non-commutative probability space with a pair of $(\B, \D)$-valued expectations. If
\[\phi, \psi: \bigcup_{n \geq 1}\bigcup_{\chi: \{1, \dots, n\} \to \{\ell, r\}}\B\N\C(\chi) \times \A_{\chi(1)} \times \cdots \times \A_{\chi(n)} \to \B\]
and
\[\Phi, \Psi: \bigcup_{n \geq 1}\bigcup_{\chi: \{1, \dots, n\} \to \{\ell, r\}}\B\N\C(\chi) \times \A_{\chi(1)} \times \cdots \times \A_{\chi(n)} \to \D\]
are such that $(\phi, \Phi)$ and $(\psi, \Psi)$ are operator-valued conditionally bi-multiplicative related by the formulae
\[\phi_\pi(Z_1, \dots, Z_n) = \sum_{\substack{\sigma \in \B\N\C(\chi)\\\sigma \leq \pi}}\psi_\sigma(Z_1, \dots, Z_n)\]
and
\[\Phi_\pi(Z_1, \dots, Z_n) = \sum_{\substack{\sigma \in \B\N\C(\chi)\\\sigma \leq \pi}}\Psi_\sigma(Z_1, \dots, Z_n)\]
for all $\chi: \{1, \dots, n\} \to \{\ell, r\}$, $\pi \in \B\N\C(\chi)$, and $Z_k \in \A_{\chi(k)}$, then $(\phi, \Phi)$ is an operator-valued conditionally bi-moment pair if and only if $(\psi, \Psi)$ is an operator-valued conditionally bi-cumulant pair.
\end{thm}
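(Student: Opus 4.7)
The plan is to handle the two components separately. The equivalence between $\phi$ being an operator-valued bi-moment function and $\psi$ being an operator-valued bi-cumulant function (in the sense that conditions $(1)$ of Definition \ref{CondBiMC} are satisfied) is exactly the content of \cite{CNS2015-2}*{Theorem 6.3.5}, so only the corresponding statement for the pair $(\Phi, \Psi)$ requires new argument. I will focus on the direction ``$(\psi, \Psi)$ bi-cumulant $\Rightarrow$ $(\phi, \Phi)$ bi-moment'' since the converse follows by M\"obius inversion (the cumulant identity for $\Psi$ being obtained from the moment identity for $\Phi$ by applying $\sum_\tau \mu_{\B\N\C}(\tau, \cdot)$).

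Fix $\chi: \{1, \dots, n\} \to \{\ell, r\}$ with $\chi(q) = \chi(q+1)$, and $Z_k \in \A_{\chi(k)}$. Using the defining formula $\Phi_\pi = \sum_{\sigma \leq \pi} \Psi_\sigma$ at $\pi = 1_{\chi|_{\setminus q}}$ and $\pi = 1_\chi$, the desired moment identity
\[
\Phi_{1_{\chi|_{\setminus q}}}(Z_1, \dots, Z_qZ_{q+1}, \dots, Z_n) = \Phi_{1_\chi}(Z_1, \dots, Z_n)
\]
is equivalent, after grouping the right-hand side along the fibers of the surjection $\B\N\C(\chi) \to \B\N\C(\chi|_{\setminus q})$ given by $\tau \mapsto \tau|_{q = q+1}$, to the per-fiber identity
\[
\Psi_\sigma(Z_1, \dots, Z_qZ_{q+1}, \dots, Z_n) = \sum_{\substack{\tau \in \B\N\C(\chi)\\\tau|_{q=q+1} = \sigma}} \Psi_\tau(Z_1, \dots, Z_n) \qforal \sigma \in \B\N\C(\chi|_{\setminus q}).
\]
For $\sigma = 1_{\chi|_{\setminus q}}$, the preimage under the merging map consists of $1_\chi$ together with all two-block bi-non-crossing $\tau$ for which $q \not\sim_\tau q+1$, so this is precisely the bi-cumulant identity for $\Psi$ from Definition \ref{CondBiMC}$(2)$.

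For general $\sigma$, I will use the operator-valued conditionally bi-multiplicativity of $(\psi, \Psi)$ to factor $\Psi_\sigma$ according to the $\chi$-interval decomposition of Remark \ref{Decomposition} and reduce each exterior-block factor (via condition $(3)$ and the modified condition $(4)$) to a single $\Psi_{1_{\chi'}}$ with interior contributions absorbed through $\psi$. Exactly one of these factors contains the merged position $q$; call it $F_0$. All the other factors $F_1, \dots, F_s$ are unaffected by the merge, and an analogous factorization on the right-hand side produces the same factors $F_1, \dots, F_s$ together with, in place of $F_0$, a sum indexed by all ways of splitting or keeping together the block of $\sigma$ containing the merged point. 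Because $(\psi, \Psi)$ is assumed to be bi-cumulant and because the $\psi$-part of this reduction already satisfies the analogous identity by \cite{CNS2015-2}*{Theorem 6.3.5}, the base case applied to $F_0$ yields the required per-fiber equality. The main obstacle, and the step that requires the most careful bookkeeping, is verifying that the two factorizations produce precisely matching families: one must confirm that the bi-non-crossing $\tau \in \B\N\C(\chi)$ with $\tau|_{q=q+1} = \sigma$ are in bijection with the pairs (a splitting of the relevant block of $\sigma$) $\times$ (refinements of interior blocks produced by $\psi$), and that the interior/exterior designations match up on both sides so that the correct member of the pair $(\psi, \Psi)$ is used at each factor. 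Once this combinatorial matching is set up, induction on $n$ (with the base case supplied by the $\sigma = 1_{\chi|_{\setminus q}}$ computation above, applied to each factor $F_j$) closes the argument, and the converse direction is then obtained by inverting the $\Psi = \Phi$ relation via $\mu_{\B\N\C}$ on $\B\N\C(\chi)$ and running the same bookkeeping in reverse.
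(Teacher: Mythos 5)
Your proposal is correct and follows essentially the same route as the paper: both handle the $\B$-valued components by citing \cite{CNS2015-2}*{Theorem 6.3.5}, both reduce the $\D$-valued moment identity to the per-fiber identity $\Psi_\sigma(\dots, Z_qZ_{q+1}, \dots) = \sum_{\tau|_{q=q+1}=\sigma}\Psi_\tau(\dots)$ established via conditional bi-multiplicativity together with Definition \ref{CondBiMC}$(2)$, and both dispatch the converse by running the analogous (M\"obius/induction) computation in reverse. The combinatorial bookkeeping you flag as the main obstacle is left at the same level of detail in the paper itself, which defers to the corresponding calculations in the proof of \cite{CNS2015-2}*{Theorem 6.3.5}.
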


\begin{proof}
Let $\chi: \{1, \dots, n\} \to \{\ell, r\}$ be such that there exists a $q \in \{1, \dots, n - 1\}$ with $\chi(q) = \chi(q + 1)$. If $(\psi, \Psi)$ is an operator-valued conditionally bi-cumulant pair, then
\[\phi_{1_{\chi|_{\setminus q}}}(Z_1, \dots, Z_{q - 1}, Z_qZ_{q + 1}, Z_{q + 2}, \dots, Z_n) = \phi_{1_\chi}(Z_1, \dots, Z_n)\]
for all $Z_k \in \A_{\chi(k)}$ by \cite{CNS2015-2}*{Theorem 6.3.5}. On the other hand, using the operator-valued conditionally bi-multiplicativity of $(\psi, \Psi)$ and part $(2)$ of Definition \ref{CondBiMC}, we have
\[\Psi_\pi(Z_1, \dots, Z_{q - 1}, Z_qZ_{q + 1}, \dots, Z_n) = \sum_{\substack{\sigma \in \B\N\C(\chi)\\\sigma|_{q = q + 1} = \pi}}\Psi_\sigma(Z_1, \dots, Z_n)\]
for all $\pi \in \B\N\C(\chi|_{\setminus q})$, and it follows from the same calculations as in the first part of the proof of \cite{CNS2015-2}*{Theorem 6.3.5} that
\[\Phi_{1_{\chi|_{\setminus q}}}(Z_1, \dots, Z_{q - 1}, Z_qZ_{q + 1}, Z_{q + 2}, \dots, Z_n) = \Phi_{1_\chi}(Z_1, \dots, Z_n)\]
for all $Z_k \in \mathcal{A}_{\chi(k)}$.

Conversely, if $(\phi, \Phi)$ is an operator-valued conditionally bi-moment pair, then
\[\psi_{1_{\chi|_{\setminus q}}}(Z_1, \dots, Z_{q - 1}, Z_qZ_{q + 1}, Z_{q + 2}, \dots, Z_n) = \psi_{1_\chi}(Z_1, \dots, Z_n) + \sum_{\substack{\pi \in \B\N\C(\chi)\\|\pi| = 2, q \not\sim_\pi q + 1}}\psi_\pi(Z_1, \dots, Z_n)\]
for all $Z_k \in \A_{\chi(k)}$ by \cite{CNS2015-2}*{Theorem 6.3.5}, and it follows from the same induction arguments as in the second part of the proof of \cite{CNS2015-2}*{Theorem 6.3.5} that
\[\Psi_{1_{\chi|_{\setminus q}}}(Z_1, \dots, Z_{q - 1}, Z_qZ_{q + 1}, Z_{q + 2}, \dots, Z_n) = \Psi_{1_\chi}(Z_1, \dots, Z_n) + \sum_{\substack{\pi \in \B\N\C(\chi)\\|\pi| = 2, q \not\sim_\pi q + 1}}\Psi_\pi(Z_1, \dots, Z_n)\]
for all $Z_k \in \A_{\chi(k)}$.
\end{proof}

As an immediate corollary, we have the following expected result.

\begin{cor}
Let $(\A, \mathbb{E}, \mathbb{F}, \varepsilon)$ be a $\mathcal{B}$-$\mathcal{B}$-non-commutative probability space with a pair of $(\B, \D)$-valued expectations. The operator-valued conditionally bi-free moment pair $(\E, \F)$ is an operator-valued conditionally bi-moment pair and the operator-valued conditionally bi-free cumulant pair $(\kappa, \K)$ is an operator-valued conditionally bi-cumulant pair.
\end{cor}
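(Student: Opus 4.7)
The plan is to invoke the preceding theorem with $(\phi, \Phi) = (\E, \F)$ and $(\psi, \Psi) = (\kappa, \K)$. The hypotheses of that theorem require both pairs to be operator-valued conditionally bi-multiplicative (already proved in this section) and related by the Möbius-type formulae $\E_\pi(Z_1, \ldots, Z_n) = \sum_{\sigma \leq \pi}\kappa_\sigma(Z_1, \ldots, Z_n)$ and $\F_\pi(Z_1, \ldots, Z_n) = \sum_{\sigma \leq \pi}\K_\sigma(Z_1, \ldots, Z_n)$, together with the fact that one of the two pairs is an operator-valued conditionally bi-moment pair.

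The fact that $(\E, \F)$ is an operator-valued conditionally bi-moment pair is immediate from Definition \ref{CBFMomentPair}: since $\E_{1_\chi}(Z_1, \ldots, Z_n) = \bE(Z_1\cdots Z_n)$ and $\F_{1_\chi}(Z_1, \ldots, Z_n) = \bF(Z_1\cdots Z_n)$, whenever $\chi(q) = \chi(q+1)$ the product $Z_qZ_{q+1}$ lies in $\A_{\chi(q)}$ and associativity in $\A$ yields the two required identities of Definition \ref{CondBiMC}(1).

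For the Möbius-type relations, the formula $\E_\pi = \sum_{\sigma \leq \pi}\kappa_\sigma$ is simply Möbius inversion of the defining formula in Definition \ref{MomentCumulant}, while the companion formula $\F_\pi = \sum_{\sigma \leq \pi}\K_\sigma$ is exactly Definition \ref{OpVCBFCumulants}(3) at $\pi = 1_\chi$. The general case is handled by induction on the number of interior blocks of $\pi$. Factoring both sides over the $V_k$-decomposition from Remark \ref{Decomposition} via condition (3) of operator-valued conditionally bi-multiplicativity reduces to the situation where $\pi$ has a single exterior block. In that situation, one excises the interior block $V$ of $\pi$ terminating closest to the bottom as in Definition \ref{CBFMomentPair}(3): the modified condition (4) rewrites $\F_\pi$ as $\F_{\pi|_{V^\complement}}$ of the tuple in which $\E_{\pi|_V}$ has been absorbed into the appropriate adjacent slot, while the sum $\sum_{\sigma \leq \pi}\K_\sigma$ decomposes as $\sum_{\sigma' \leq \pi|_{V^\complement}}\sum_{\tau \in \B\N\C(\chi|_V)}\K_{\sigma' \cup \tau}$ and, by the same modified condition (4) applied to $\K$, equals the corresponding $\K$-sum on $V^\complement$ with each $\kappa_\tau$ absorbed. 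Substituting $\E_{\pi|_V} = \sum_\tau \kappa_\tau$ reconciles the two absorbed slots, at which point the induction hypothesis on $\pi|_{V^\complement}$ closes the argument.

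With these ingredients in hand, the preceding theorem applied with $(\phi, \Phi) = (\E, \F)$ and $(\psi, \Psi) = (\kappa, \K)$ yields that $(\kappa, \K)$ is an operator-valued conditionally bi-cumulant pair, which is the content of the corollary. The main technical subtlety is the parallel induction for the $\F$-$\K$ relation, where the factorisation over $V_k$-intervals and the absorption of interior blocks must be executed in lockstep on the two sides; however, because $\F$ and $\K$ satisfy identical reduction rules and the $\E$-$\kappa$ relation serves as the bridge between the absorbed quantities, the induction is routine.
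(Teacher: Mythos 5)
Your overall strategy --- feeding $(\E,\F)$ and $(\kappa,\K)$ into the preceding theorem --- is the intended one, and your observations that $(\E,\F)$ is an operator-valued conditionally bi-moment pair and that $\E_\pi=\sum_{\sigma\leq\pi}\kappa_\sigma$ (M\"obius inversion of Definition \ref{MomentCumulant}) are both fine. The gap is the companion relation $\F_\pi=\sum_{\sigma\leq\pi}\K_\sigma$, which you assert for all $\pi$ and propose to prove by induction on interior blocks: this identity is \emph{false} for any $\pi\neq 1_\chi$ possessing an interior block. Take $n=3$, $\chi\equiv\ell$, and $\pi=\{\{1,3\},\{2\}\}$. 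Then $\F_\pi(Z_1,Z_2,Z_3)=\bF\bigl(Z_1L_{\bE(Z_2)}Z_3\bigr)$, while the only refinements of $\pi$ are $\pi$ and $0_\chi$, and Definition \ref{OpVCBFCumulants} gives $\K_\pi=\K_{1_{2,0}}\bigl(Z_1L_{\bE(Z_2)},Z_3\bigr)=\bF\bigl(Z_1L_{\bE(Z_2)}Z_3\bigr)-\bF\bigl(Z_1L_{\bE(Z_2)}\bigr)\bF(Z_3)$ and $\K_{0_\chi}=\bF(Z_1)\bF(Z_2)\bF(Z_3)$; the two sides therefore differ by $\bF\bigl(Z_1L_{\bE(Z_2)}\bigr)\bF(Z_3)-\bF(Z_1)\bF(Z_2)\bF(Z_3)$, which at $Z_1=Z_3=1$ equals $\bE(Z_2)-\bF(Z_2)\neq 0$ in general. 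Your induction breaks at exactly the ``absorption'' step: in $\sum_{\sigma'\leq\pi|_{V^\complement}}\sum_{\tau}\K_{\sigma'\cup\tau}$, once $\sigma'$ splits the exterior block of $\pi$ that surrounded $V$, the blocks of $\tau$ need no longer be interior in $\sigma'\cup\tau$, so they are evaluated with $\K$ (built from $\bF$) rather than with $\kappa$ (built from $\bE$), and the substitution $\E_{\pi|_V}=\sum_\tau\kappa_\tau$ no longer reconciles the two sides. Unlike $(\E,\kappa)$, the pair $(\F,\K)$ satisfies the summation formula only at $\pi=1_\chi$; this is an intrinsic feature of conditional cumulants, already visible in the scalar-valued case.

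Consequently the hypothesis of the preceding theorem, read literally, cannot be verified for $(\E,\F)$ and $(\kappa,\K)$, and the corollary should not be routed through it in the way you propose. The statement is nonetheless true, and the available argument is shorter: the $\E$- and $\kappa$-identities of Definition \ref{CondBiMC} are \cite{CNS2015-2}*{Theorem 6.3.5}; the identity $\F_{1_{\chi|_{\setminus q}}}(Z_1,\dots,Z_qZ_{q+1},\dots,Z_n)=\F_{1_\chi}(Z_1,\dots,Z_n)$ is just associativity in $\A$ since $\F_{1_\chi}=\bF(Z_1\cdots Z_n)$; and the $\K$-identity required by Definition \ref{CondBiMC}$(2)$ is precisely the case $\pi=1_{\chi|_{\setminus q}}$ of the Lemma in the subsection on operator-valued cumulants of products, because the partitions $\sigma\in\B\N\C(\chi)$ with $\sigma|_{q=q+1}=1_{\chi|_{\setminus q}}$ are exactly $1_\chi$ together with the two-block partitions separating $q$ from $q+1$. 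That lemma uses only the relation at $1_\chi$ (Definition \ref{OpVCBFCumulants}$(3)$) and conditional bi-multiplicativity, which is all that is actually available.
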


\subsection{Operations on operator-valued cumulants}

The following two results demonstrate how certain operations affect operator-valued conditionally bi-free cumulants under certain conditions. The same effects in the scalar-valued setting were observed in \cite{GS2016}*{Lemmata 4.17 and 4.18}.

\begin{lem}
Let $(\A, \mathbb{E}, \mathbb{F}, \varepsilon)$ be a $\mathcal{B}$-$\mathcal{B}$-non-commutative probability space with a pair of $(\B, \D)$-valued expectations. Let $\chi: \{1, \dots, n\} \to \{\ell, r\}$ be such that $\chi(k_0) = \ell$ and $\chi(k_0 + 1) = r$ for some $k_0 \in \{1, \dots, n - 1\}$, and let $X \in \A_\ell$ and $Y \in \A_r$ be such that $\bE(ZXYZ') = \bE(ZYXZ')$ and $\bF(ZXYZ') = \bF(ZYXZ')$ for all $Z, Z' \in \A$.  Define $\chi': \{1, \dots, n\} \to \{\ell, r\}$ by
\[
\chi'(k) = \begin{cases}
r &\text{if } k = k_0\\
\ell &\text{if } k = k_0 + 1\\
\chi(k) &\text{otherwise }
\end{cases}.
\]
Then
\[\K_{1_\chi}(Z_1, \dots, Z_{k_0 - 1}, X, Y, Z_{k_0 + 2}, \dots, Z_n) = \K_{1_{\chi'}}(Z_1, \dots, Z_{k_0 - 1}, Y, X, Z_{k_0 + 2}, \dots, Z_n)\]
for all $Z_1, \dots, Z_{k_0 - 1}, Z_{k_0 + 2}, \dots, Z_n \in \A$ with $Z_k \in \A_{\chi(k)}$.
\end{lem}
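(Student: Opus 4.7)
The plan is to combine induction on $n$ with a natural bijection between $\B\N\C(\chi)$ and $\B\N\C(\chi')$ induced by the position transposition $\sigma$ that swaps $k_0$ and $k_0+1$ (and is the identity elsewhere). Since $\chi(k_0) = \ell$, $\chi(k_0+1) = r$, $\chi'(k_0) = r$, and $\chi'(k_0+1) = \ell$, we have $\chi'(\sigma(k)) = \chi(k)$ for every $k$, and the swap of two adjacent positions of different colour preserves the relative order within each colour class. Hence $\sigma$ is an order-isomorphism $(\{1, \dots, n\}, \prec_\chi) \to (\{1, \dots, n\}, \prec_{\chi'})$, and $\pi \mapsto \sigma(\pi)$ (applying $\sigma$ blockwise) is a lattice isomorphism $\B\N\C(\chi) \cong \B\N\C(\chi')$ fixing the maximal element. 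Under this bijection, the element at position $\sigma(k)$ in the swapped tuple $(Z_1, \dots, Z_{k_0-1}, Y, X, Z_{k_0+2}, \dots, Z_n)$ agrees with the element at position $k$ in the original tuple $(Z_1, \dots, Z_{k_0-1}, X, Y, Z_{k_0+2}, \dots, Z_n)$.

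The base case $n = 2$ is immediate: taking $Z = Z' = 1$ in the hypothesis yields $\bF(XY) = \bF(YX)$, and in both $\B\N\C(\chi)$ and $\B\N\C(\chi')$ the only non-maximal partition has two singleton blocks whose $\K$-product reduces to $\bF(X)\bF(Y)$ on each side. For the inductive step, apply the recursive formula
\[
\K_{1_\chi}(Z_1, \dots, X, Y, \dots, Z_n) = \F_{1_\chi}(Z_1, \dots, X, Y, \dots, Z_n) - \sum_{\substack{\pi \in \B\N\C(\chi) \\ \pi \neq 1_\chi}} \K_\pi(Z_1, \dots, X, Y, \dots, Z_n),
\]
and note that the first term equals its $\chi'$-analogue by the hypothesis with $Z = Z_1 \cdots Z_{k_0-1}$ and $Z' = Z_{k_0+2} \cdots Z_n$. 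It then suffices to prove
\[
\K_\pi(Z_1, \dots, X, Y, \dots, Z_n) = \K_{\sigma(\pi)}(Z_1, \dots, Y, X, \dots, Z_n)
\]
for each $\pi \neq 1_\chi$, after which summing over the bijection $\pi \mapsto \sigma(\pi)$ yields the lemma.

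To establish this term-by-term equality, reduce $\K_\pi$ via the operator-valued conditionally bi-multiplicativity of $(\kappa, \K)$ to a product of $\kappa_{1_{\chi|_V}}$ over interior blocks $V$ and $\K_{1_{\chi|_V}}$ over exterior blocks, with interior cumulant values spliced into outer slots as $L_b$ or $R_b$ operators. Since $\pi \neq 1_\chi$, every block $V$ satisfies $|V| < n$, so the following casework closes the induction: a block disjoint from $\{k_0, k_0+1\}$ contributes identically to both sides; a block containing exactly one of $k_0, k_0+1$ contributes the same value because the colour pattern on $V$ under $\chi$ and on $\sigma(V)$ under $\chi'$ match under the order-isomorphism and no swap of arguments occurs inside; and a block containing both $k_0$ and $k_0+1$ triggers the inductive hypothesis on the smaller cumulant $\K_{1_{\chi|_V}}$, together with the analogous statement for $\kappa$ (proved by a parallel induction using the vanishing-of-$\kappa$ techniques of \cite{CNS2015-2}).

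The principal obstacle will be verifying that the splicing step of the bi-multiplicative reduction is $\sigma$-equivariant: if the interior block $V$ contains both $k_0, k_0+1$ and its $\kappa$-value is spliced as an $L_b$ or $R_b$ operator into the $p$-th slot of the outer block, then on the $\chi'$-side it must be spliced into the $\sigma(p)$-th slot of $\sigma(V)^\complement$. This follows because conditions $(1)$--$(4)$ of Definitions \ref{BiMulti} and \ref{CondBiMulti} prescribe the splicing slot purely in terms of $\prec_\chi$, which is transported to $\prec_{\chi'}$ by $\sigma$; the ordering of the factors in the final product likewise matches via $\sigma$. Granting this equivariance, the recursion collapses to an equality of sums and the proof concludes.
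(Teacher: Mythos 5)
Your proposal is correct in substance and reaches the same two pillars as the paper's argument --- the order-isomorphism $\sigma$ swapping $k_0$ and $k_0+1$ (which identifies $\B\N\C(\chi)$ with $\B\N\C(\chi')$ compatibly with $\prec_\chi$ and $\prec_{\chi'}$, hence with interior/exterior blocks and with the ordering of factors) and a term-by-term matching under this bijection --- but it organizes the matching differently. The paper expands $\K_{1_\chi}$ all the way down to a sum $\sum d(\chi;\pi,\iota)\Theta_{(\pi,\iota)}$ of moment expressions built from $\bE$ and $\bF$ of products, so the hypothesis $\bE(ZXYZ')=\bE(ZYXZ')$, $\bF(ZXYZ')=\bF(ZYXZ')$ is applied once, directly at the moment level, and no induction is needed; you instead induct on the defining recursion $\K_{1_\chi}=\F_{1_\chi}-\sum_{\pi\neq 1_\chi}\K_\pi$ and stop the reduction at cumulants of blocks. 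Your route is viable, but it carries two obligations the paper's avoids: (i) you need the parallel statement for $\kappa$ as an input for interior blocks containing both $k_0$ and $k_0+1$ (this is available --- it is \cite{S2015}*{Lemma 2.16}, which the paper invokes in the subsequent theorem --- so your ``parallel induction'' is not actually necessary); and (ii) in the inductive step the arguments fed to the smaller cumulant are not $X$ and $Y$ themselves but dressed versions such as $L_{b_1}XL_{b_2}$ and $R_{d_1}YR_{d_2}$, so you must check that the commutation hypothesis persists for these. It does --- since $X'\in\A_\ell$ commutes with all $R_d$ and $Y'\in\A_r$ with all $L_b$, one has $ZX'Y'Z'=(ZL_{b_1}R_{d_1})XY(L_{b_2}R_{d_2}Z')$ and similarly for $Y'X'$, so the hypothesis quantified over all $Z,Z'$ applies --- but this verification should be recorded, as it is the one point where your induction could silently fail if the hypothesis were stated only for $Z=Z'=1$. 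With that check added, your proof is complete; the paper's version buys a shorter argument at the cost of invoking the full $\Theta_{(\pi,\iota)}$-expansion machinery from Theorem \ref{VanishingEquiv} and a separate justification that the integer coefficients $d(\chi;\pi,\iota)$ depend only on the lattice structure.
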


\begin{proof}
By repeatedly applying Definition \ref{OpVCBFCumulants} and using Definition \ref{MomentCumulant} for interior blocks, we have
\[\K_{1_\chi}(Z_1, \dots, Z_{k_0 - 1}, X, Y, Z_{k_0 + 2}, \dots, Z_n) = \sum_{(\pi, \iota) \in \B\N\C(\chi, ie)}d(\chi; \pi, \iota)\Theta_{(\pi, \iota)}(Z_1, \dots, Z_{k_0 - 1}, X, Y, Z_{k_0 + 2}, \dots, Z_n)\]
for some integer coefficients such that $d(\chi; \pi, \iota) = 0$ if there is an interior block $V$ of $\pi$ with $i(V) = e$, and $\Theta_{(\pi, \iota)}(Z_1, \dots, Z_{k_0 - 1}, X, Y, Z_{k_0 + 2}, \dots, Z_n)$ for non-zero $d(\chi; \pi, \iota)$ is defined as in the proof of Theorem \ref{VanishingEquiv}. Similarly, we have
\begin{align*}
\K_{1_{\chi'}}(Z_1, \dots, Z_{k_0 - 1}, Y, & X, Z_{k_0 + 2}, \dots, Z_n)  \\
& = \sum_{(\pi', \iota') \in \B\N\C(\chi', ie)}d(\chi'; \pi', \iota')\Theta_{(\pi', \iota')}(Z_1, \dots, Z_{k_0 - 1}, Y, X, Z_{k_0 + 2}, \dots, Z_n).
\end{align*}
Note that there is a bijection from $\B\N\C(\chi, ie)$ to $\B\N\C(\chi', ie)$ which sends a pair $(\pi, \iota)$ to the pair $(\pi', \iota')$ obtained by swapping $k_0$ and $k_0 + 1$. Furthermore, as only the lattice structure affects the expansions of the above formulae (alternatively, by appealing to the scalar-valued case in \cite{GS2016}*{Subsection 4.2}), $d(\chi; \pi, \iota) = d(\chi'; \pi', \iota')$ under this bijection.

To complete the proof, it suffices to show that
\[\Theta_{(\pi, \iota)}(Z_1, \dots, Z_{k_0 - 1}, X, Y, Z_{k_0 + 2}, \dots, Z_n) = \Theta_{(\pi', \iota')}(Z_1, \dots, Z_{k_0 - 1}, Y, X, Z_{k_0 + 2}, \dots, Z_n)\]
for all $(\pi, \iota) \in \B\N\C(\chi, ie)$.
If $k_0$ and $k_0 + 1$ are in the same block of $\pi$, then one may reduce
\[\Theta_{(\pi, \iota)}(Z_1, \dots, Z_{k_0 - 1}, X, Y, Z_{k_0 + 2}, \dots, Z_n)\]
to an expression involving $\bE(ZXYZ')$ or $\bF(ZXYZ')$ for some $Z, Z' \in \A$, commute $X$ and $Y$ to get $\bE(ZYXZ')$ or $\bF(ZYXZ')$, and undo the reduction to obtain
\[\Theta_{(\pi', \iota')}(Z_1, \dots, Z_{k_0 - 1}, Y, X, Z_{k_0 + 2}, \dots, Z_n).\]
On the other hand, if $k_0$ and $k_0 + 1$ are in different blocks of $\pi$, then the reductions of
\[\Theta_{(\pi, \iota)}(Z_1, \dots, Z_{k_0 - 1}, X, Y, Z_{k_0 + 2}, \dots, Z_n)\qand\Theta_{(\pi', \iota')}(Z_1, \dots, Z_{k_0 - 1}, Y, X, Z_{k_0 + 2}, \dots, Z_n)\]
agree. Consequently, the proof is complete.
\end{proof}

\begin{lem}
Let $(\A, \mathbb{E}, \mathbb{F}, \varepsilon)$ be a $\mathcal{B}$-$\mathcal{B}$-non-commutative probability space with a pair of $(\B, \D)$-valued expectations. Let $\chi: \{1, \dots, n\} \to \{\ell, r\}$ be such that $\chi(n) = \ell$, and let $X \in \A_\ell$ and $Y \in \A_r$ be such that $\bE(ZX) = \bE(ZY)$ and $\bF(ZX) = \bF(ZY)$ for all $Z \in \A$.  Define $\chi': \{1, \dots, n\} \to \{\ell, r\}$ by
\[
\chi'(k) = \begin{cases}
r &\text{if } k = n\\
\chi(k) &\text{otherwise }
\end{cases}.
\]
Then
\[\K_{1_\chi}(Z_1, \dots, Z_{n - 1}, X) = \K_{1_{\chi'}}(Z_1, \dots, Z_{n - 1}, Y)\]
for all $Z_1, \dots, Z_{n - 1} \in \A$ with $Z_k \in \A_{\chi(k)}$.
\end{lem}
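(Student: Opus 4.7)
The plan is to adapt the strategy from the preceding lemma. The crucial preliminary observation is that $\prec_\chi = \prec_{\chi'}$ as total orders on $\{1, \dots, n\}$. Indeed, writing $\ell_1 < \cdots < \ell_{m-1}$ for the left positions of $\chi$ other than $n$ and $r_1 < \cdots < r_k$ for the right positions, both orderings list the indices as $\ell_1, \dots, \ell_{m-1}, n, r_k, \dots, r_1$, because switching $n$ from being the last left position to being the ``earliest right position in reverse order'' keeps $n$ in exactly the same spot of the total order. Consequently $\B\N\C(\chi) = \B\N\C(\chi')$ as lattices, and the interior/exterior classification of each block coincides.

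As in the proof of Theorem \ref{VanishingEquiv}, one may recursively apply Definitions \ref{CBFMomentPair} and \ref{OpVCBFCumulants} to expand
\[
\K_{1_\chi}(Z_1, \dots, Z_{n-1}, X) = \sum_{(\pi, \iota) \in \B\N\C(\chi, ie)} d(\chi; \pi, \iota)\, \Theta_{(\pi, \iota)}(Z_1, \dots, Z_{n-1}, X),
\]
where $d(\chi; \pi, \iota) = 0$ whenever some interior block carries label $e$, and $\Theta_{(\pi, \iota)}$ is computed by the operator-valued conditionally bi-multiplicative reduction using $\bE$ on interior blocks and $\bF$ on exterior blocks. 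The coefficients $d(\chi; \pi, \iota)$ depend only on the lattice structure of $\B\N\C(\chi)$ and hence agree with $d(\chi'; \pi, \iota)$ under the identity bijection. Therefore it suffices to prove
\[
\Theta_{(\pi, \iota)}(Z_1, \dots, Z_{n-1}, X) = \Theta_{(\pi, \iota)}(Z_1, \dots, Z_{n-1}, Y)
\]
for every $(\pi, \iota)$, where the two sides are computed with respect to $\chi$ and $\chi'$, respectively.

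Fixing such a $(\pi, \iota)$, let $V$ be the block of $\pi$ containing $n$. Since $n = \max(V)$ in the natural order, once every interior block disjoint from $V$ is reduced and the resulting $L_b$ or $R_b$ inserted into the remaining factors, $X$ (respectively $Y$) appears as the rightmost factor in the product that gets evaluated by $\bE$ or $\bF$ on $V$, possibly flanked by $L_b$ or $R_b$ operators coming from interior sub-blocks attached to $n$. Using conditions $(1)$ and $(2)$ of Definition \ref{BiMulti}, the fact that $X \in \A_\ell$ commutes with every $R_b$ (and similarly $Y \in \A_r$ commutes with every $L_b$), and the identities $\bE(\cdot L_b) = \bE(\cdot R_b)$ and $\bF(\cdot L_b) = \bF(\cdot R_b)$, one can move all such operators to the left of the final $X$ or $Y$, reducing the computation to $\bE(WX)$ or $\bF(WX)$ for the $\chi$-case and $\bE(WY)$ or $\bF(WY)$ for the $\chi'$-case with the \emph{same} $W \in \A$. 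The hypotheses $\bE(ZX) = \bE(ZY)$ and $\bF(ZX) = \bF(ZY)$ then give the required equality. The main obstacle is verifying that the differing applications of condition $(1)$ in the $\chi$- and $\chi'$-cases (where the position $q$ of opposite colour to $n$ shifts from being the smallest right index to being $\ell_{m-1}$) produce the same intermediate product $W$; this is precisely ensured by the coincidence $\prec_\chi = \prec_{\chi'}$, which forces the recursion through $V$ to traverse its elements in the same order in both settings.
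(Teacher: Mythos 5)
Your proof is correct and follows essentially the same route as the paper: expand $\K_{1_\chi}$ over $\B\N\C(\chi,ie)$ with coefficients $d(\chi;\pi,\iota)$, note these depend only on the lattice structure, and match the terms $\Theta_{(\pi,\iota)}$ pairwise using the hypotheses $\bE(ZX)=\bE(ZY)$ and $\bF(ZX)=\bF(ZY)$. Your observation that $\prec_\chi=\prec_{\chi'}$ is a clean way of exhibiting the bijection the paper describes as ``changing the last node from a left node to a right node,'' and the rest of the reduction matches the paper's argument.
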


\begin{proof}
By the same arguments as the previous lemma, we have
\[d(\chi; \pi, \iota)\Theta_{(\pi, \iota)}(Z_1, \dots, Z_{n - 1}, X) = d(\chi'; \pi', \iota')\Theta_{(\pi', \iota')}(Z_1, \dots, Z_{n - 1}, Y)\]
for all $(\pi, \iota) \in \B\N\C(\chi, ie)$, where $(\pi', \iota')$ is obtained from $(\pi, \iota)$ by changing the last node from a left node to a right node. Consequently, the proof is complete.
\end{proof}

In \cite{CNS2015-2}*{Theorem 10.2.1}, it was demonstrated that for a family of $\B$-algebras with certain conditions, bi-free independence over $\B$ can be deduced from free independence over $\B$ of either the left $\B$-algebras or the right $\B$-algebras. The conditionally bi-free analogue in the scalar-valued setting was proved in \cite{GS2016}*{Theorem 4.20}.

\begin{thm}
Let $(\A, \mathbb{E}, \mathbb{F}, \varepsilon)$ be a $\mathcal{B}$-$\mathcal{B}$-non-commutative probability space with a pair of $(\B, \D)$-valued expectations. If $\{(\A_{k, \ell}, \A_{k, r})\}_{k \in K}$ is a family of pairs of $\B$-algebras in $(\A, \mathbb{E}, \mathbb{F}, \varepsilon)$ such that
\begin{enumerate}[$\qquad(1)$]
\item $\A_{m, \ell}$ and $\A_{n, r}$ commute for all $m, n \in K$,

\item for every $Y \in \A_{k, r}$, there exists an $X \in \A_{k, \ell}$ such that $\bE(ZY) = \bE(ZX)$ and $\bF(ZY) = \bF(ZX)$ for all $Z \in \A$,
\end{enumerate}
then $\{(\A_{k, \ell}, \A_{k, r})\}_{k \in K}$ is c-bi-free over $(\B, \D)$ if and only if $\{\A_{k, \ell}\}_{k \in K}$ is c-free over $(\B, \D)$. Consequently, if $\{\A_{k, \ell}\}_{k \in K}$ is c-free over $(\B, \D)$, then $\{\A_{k, r}\}_{k \in K}$ is c-free over $(\B, \D)$.
\end{thm}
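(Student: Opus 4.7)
The forward direction requires no new work: as noted immediately after Definition \ref{defn:op-c-bi-free-definition}, if $\{(\A_{k,\ell},\A_{k,r})\}_{k\in K}$ is c-bi-free over $(\B,\D)$ then $\{\A_{k,\ell}\}_{k\in K}$ is automatically c-free over $(\B,\D)$. For the converse, my plan is to apply Theorem \ref{VanishingEquiv} and show that both $\kappa_{1_\chi}(Z_1,\dots,Z_n) = 0$ and $\K_{1_\chi}(Z_1,\dots,Z_n) = 0$ whenever $Z_k\in\A_{\omega(k),\chi(k)}$ and $\omega$ is non-constant.

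The vanishing of the mixed bi-free cumulants $\kappa_{1_\chi}$ is handled by \cite{CNS2015-2}*{Theorem 10.2.1}: c-freeness over $(\B,\D)$ of $\{\A_{k,\ell}\}_{k\in K}$ implies freeness over $\B$ of the same family, and conditions (1) and (2) (only the $\bE$-part of (2) is needed here) match the hypotheses of that theorem, which promotes left-freeness over $\B$ to bi-free independence over $\B$.

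For the c-bi-free cumulants $\K_{1_\chi}$, the plan mirrors the scalar proof of \cite{GS2016}*{Theorem 4.20} and combines the two permutation/replacement lemmas proved just above. Condition (1) gives $XY = YX$ for every $X\in\A_\ell$ and $Y\in\A_r$, so the commutation lemma applies to every adjacent $(\ell,r)$-pair of arguments; repeated application sorts all left arguments to the front of the cumulant while preserving its value and the attached $\omega$-indices. If right arguments still remain, the last argument is of type $r$, and condition (2) supplies a left operator of the same $\omega$-index with identical $\bE$- and $\bF$-moments against every element of $\A$, so the second lemma replaces it by its left counterpart. Re-sorting slides the new left argument into the block of lefts, and iterating eliminates every right argument, producing an all-left cumulant $\K_{1_{\tilde\chi}}(\tilde Z_1,\dots,\tilde Z_n)$ with $\tilde\chi\equiv\ell$ and a still non-constant $\tilde\omega$. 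Unwinding the recursions in Definitions \ref{CBFMomentPair} and \ref{OpVCBFCumulants} in the degenerate case $\chi\equiv\ell$ identifies $\K_{1_{\tilde\chi}}$ with the operator-valued c-free cumulant of $\{\A_{k,\ell}\}_{k\in K}$, which vanishes by c-freeness of that family. The final consequence about $\{\A_{k,r}\}_{k\in K}$ then follows by combining the newly established converse with the forward direction.

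I expect the main obstacle to be the identification step at the end of the $\K$-argument, i.e.\ matching $\K_{1_\chi}$ for $\chi\equiv\ell$ with the operator-valued c-free cumulant from \cite{P2008} that governs c-freeness over $(\B,\D)$. Bi-non-crossing partitions collapse to ordinary non-crossing partitions when every node is left-coloured, so the recursion in Definition \ref{OpVCBFCumulants} should specialize cleanly, but the correspondence between the two recursions, and in particular the handling of interior versus exterior blocks, must be carefully verified.
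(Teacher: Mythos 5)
Your proposal is correct and follows essentially the same route as the paper: the forward direction is immediate from the definition, and the converse reduces mixed operator-valued bi-free and conditionally bi-free cumulants to mixed (conditionally) free cumulants of the left algebras by iterating the two preceding lemmata (commuting adjacent left/right arguments via hypothesis (1) and replacing the terminal right argument by its left counterpart via hypothesis (2)), then invokes Theorem \ref{VanishingEquiv}. The identification step you flag for $\chi \equiv \ell$ is likewise left implicit in the paper, so your treatment matches its level of detail.
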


\begin{proof}
If $\{(\A_{k, \ell}, \A_{k, r})\}_{k \in K}$ is c-bi-free over $(\B, \D)$, then it is clear that $\{\A_{k, \ell}\}_{k \in K}$ is c-free over $(\B, \D)$ and $\{\A_{k, r}\}_{k \in K}$ is c-free over $(\B, \D)$.

Suppose $\{\A_{k, \ell}\}_{k \in K}$ is c-free over $(\B, \D)$.  Given a mixed operator-valued bi-free or conditionally bi-free cumulant from $\{(\A_{k, \ell}, \A_{k, r})\}_{k \in K}$, assumptions $(1)$ and $(2)$ imply that we may apply the previous two lemmata (or \cite{S2015}*{Lemmata 2.16 and 2.17}) and reduce it to a mixed operator-valued free or conditionally free cumulant from $\{\A_{k, \ell}\}_{k \in K}$, which vanishes by c-free independence over $(\B, \D)$. Thus the result follows from Theorem \ref{VanishingEquiv}.
\end{proof}

\section{The operator-valued conditionally bi-free partial $\mathcal{R}$-transform}\label{sec:R-transform}

In this section, we construct an operator-valued conditionally bi-free partial $\R$-transform generalizing \cite{GS2016}*{Definition 5.3} and relate it to certain operator-valued moment transforms. As we will see in the proof, such transform is a function of three $\B$-variables instead of two by a similar reason as the operator-valued bi-free partial $\R$-transform developed in \cite{S2015}*{Section 5}. As in \cite{S2015}*{Section 5}, our proof will follow the combinatorial techniques used in \cite{S2016-2}*{Section 7}. In that which follows, all algebras are assumed to be Banach algebras.

\begin{defn}
A \textit{Banach $\B$-$\B$-non-commutative probability space with a pair of $(\B, \D)$-valued expectations} is a $\B$-$\B$-non-commutative probability space with a pair of $(\B, \D)$-valued expectations $(\A, \bE, \bF, \varepsilon)$ such that $\A$, $\B$, and $\D$ are Banach algebras, and $\varepsilon|_{\B \otimes 1}$, $\varepsilon_{1 \otimes \B^{\mathrm{op}}}$, $\bE$, and $\bF$ are bounded.
\end{defn}

Let $(\A, \bE, \bF, \varepsilon)$ be a Banach $\B$-$\B$-non-commutative probability space with a pair of $(\B, \D)$-valued expectations, let $Z_\ell \in \A_\ell$, $Z_r \in \A_r$, and let $b, d \in \B$. Consider the following series:
\begin{align*}
M_{Z_\ell}^\ell(b) &= 1 + \sum_{m \geq 1}\bE((L_bZ_\ell)^m),\\
\mathbb{M}_{Z_\ell}^\ell(b) &= 1 + \sum_{m \geq 1}\bF((L_bZ_\ell)^m),\\
\C_{Z_\ell}^\ell(b) &= 1 + \sum_{m \geq 1}\K_{1_{\chi_{m, 0}}}(\underbrace{L_bZ_\ell, \dots, L_bZ_\ell}_{m\,\text{entries}}),
\end{align*}
and
\begin{align*}
M_{Z_r}^r(d) &= 1 + \sum_{n \geq 1}\bE((R_dZ_r)^n),\\
\mathbb{M}_{Z_r}^r(d) &= 1 + \sum_{n \geq 1}\bF((R_dZ_r)^n),\\
\C_{Z_r}^r(d) &= 1 + \sum_{n \geq 1}\K_{1_{\chi_{0, n}}}(\underbrace{R_dZ_r, \dots, R_dZ_r}_{n\,\text{entries}}).
\end{align*}

By similar arguments as in \cite{S2015}*{Remark 5.2}, all of the series above converge absolutely for $b, d$ sufficiently small.

In the proof of Theorem \ref{CR-Transform} below, the following relations will be used. Since the statements are slightly different than the ones in the literature (see, e.g., \cite{BPV2012}*{equation (15)}), we will provide a proof.

\begin{lem}\label{CumulantTransform}
Under the above assumptions and notation, we have
\[\C_{Z_\ell}^\ell\left(M_{Z_\ell}^\ell(b)b\right) = 1 + M_{Z_\ell}^\ell(b) - M_{Z_\ell}^\ell(b)\mathbb{M}_{Z_\ell}^\ell(b)^{-1} \qand \C_{Z_r}^r\left(dM_{Z_r}^r(d)\right) = 1 + M_{Z_r}^r(d) - \mathbb{M}_{Z_r}^r(d)^{-1}M_{Z_r}^r(d)\]
for $b, d$ sufficiently small.
\end{lem}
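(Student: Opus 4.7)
The plan is to prove the first identity by a combinatorial expansion of $\mathbb{M}_{Z_\ell}^\ell(b)$; the second identity follows by a symmetric argument swapping $L$-operators with $R$-operators and reversing multiplication orders. Let $M := M_{Z_\ell}^\ell(b)$, $\mathbb{M} := \mathbb{M}_{Z_\ell}^\ell(b)$, and $\alpha_r := \bE((L_b Z_\ell)^r)$ (so $\alpha_0 = 1$ and $M = \sum_{r \geq 0} \alpha_r$). By Definitions \ref{CBFMomentPair} and \ref{OpVCBFCumulants},
\begin{equation*}
\bF((L_b Z_\ell)^m) \;=\; \F_{1_{\chi_{m,0}}}(L_b Z_\ell, \dots, L_b Z_\ell) \;=\; \sum_{\pi \in \B\N\C(\chi_{m,0})} \K_\pi(L_b Z_\ell, \dots, L_b Z_\ell),
\end{equation*}
and, since $\chi_{m,0}$ is all-left, Remark \ref{Decomposition} shows that the exterior blocks $V_1, \dots, V_s$ of $\pi$ (with $V_j = \{v_{j,1} < \cdots < v_{j, k_j}\}$) have convex hulls that tile $\{1, \dots, m\}$ into consecutive adjacent intervals, while every interior block of $\pi$ lies in some gap $(v_{j,i}, v_{j,i+1})$.

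Operator-valued conditional bi-multiplicativity of $(\kappa, \K)$ now factors $\K_\pi$ across the exterior blocks (condition (3) of Definition \ref{BiMulti}) and, within each $V_j$, reduces interior blocks via $\kappa$ through the modified condition (4) of Definition \ref{CondBiMulti}, inserting each reduction as an $L$-operator at a succeeding exterior vertex. The operator-valued bi-free moment-cumulant formula gives $\sum_{\sigma \in \B\N\C(\chi_{r,0})} \kappa_\sigma(L_b Z_\ell, \dots, L_b Z_\ell) = \alpha_r$ for the inner-filling sum over a gap of size $r$; absorbing the resulting $L_{\alpha_r}$ into the resident $L_b$ via $L_{\alpha_r} L_b = L_{\alpha_r b}$ and summing over $r \geq 0$ replaces the corresponding argument by $L_{M b} Z_\ell$. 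Because consecutive exterior blocks are adjacent, no interior block precedes the initial vertex $v_{j, 1}$ of any $V_j$, so the first argument of each exterior block retains the plain form $L_b Z_\ell$. Setting
\begin{equation*}
B_k := \K_{1_{\chi_{k, 0}}}(L_b Z_\ell, L_{M b} Z_\ell, \dots, L_{M b} Z_\ell) \in \D \qand T := \sum_{k \geq 1} B_k \in \D,
\end{equation*}
the multiplicative factorization yields $\mathbb{M} - 1 = \sum_{s \geq 1} T^s = T(1 - T)^{-1}$ for $b$ small, so $\mathbb{M}^{-1} = 1 - T$, i.e., $T = 1 - \mathbb{M}^{-1}$.

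It remains to identify $T$ in terms of $\C_{Z_\ell}^\ell(M b)$. Decomposing $L_{M b} Z_\ell = L_b Z_\ell + \sum_{r \geq 1} L_{\alpha_r b} Z_\ell$ in the first argument of each summand of $\C_{Z_\ell}^\ell(Mb) - 1$ and using multilinearity of $\K$, together with condition (2) of Definition \ref{BiMulti} at $p = 1$ (where $q = -\infty$), which gives $\K(L_{\alpha_r} L_b Z_\ell, \dots) = \alpha_r \K(L_b Z_\ell, \dots)$, one obtains
\begin{equation*}
\C_{Z_\ell}^\ell(M b) - 1 \;=\; T + \sum_{r \geq 1} \alpha_r \, T \;=\; M T \;=\; M(1 - \mathbb{M}^{-1}) \;=\; M - M \mathbb{M}^{-1},
\end{equation*}
which is the claimed identity. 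All series rearrangements are justified by the absolute convergence noted before the lemma.

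The main obstacle is meticulously tracking, under the operator-valued conditionally bi-multiplicative reduction, that each interior-block output lands adjacent to a \emph{succeeding} (not initial) exterior vertex, where it merges with an existing $L_b$ into $L_{\alpha_r b}$, while the initial vertex of each exterior block receives no insertion. This asymmetry at the first argument of each exterior block is precisely what produces the extra prefactor $M$ in the relation $\C_{Z_\ell}^\ell(M b) = 1 + M T$, and it must be handled with care to identify the combinatorial structure as a genuine geometric series and obtain the closed-form identity.
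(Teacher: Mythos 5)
Your proof is correct and follows essentially the same route as the paper's: expand $\bF((L_bZ_\ell)^m)$ as $\sum_\pi \K_\pi$, use conditional bi-multiplicativity together with the bi-free moment--cumulant formula to sum the interior fillings of each gap of an exterior block to a moment $\alpha_r=\bE((L_bZ_\ell)^r)$, and absorb the resulting $L$-operators into the neighbouring $L_b$ to produce the arguments $L_{M_{Z_\ell}^\ell(b)b}Z_\ell$. The only cosmetic differences are that you organize the sum over \emph{all} exterior blocks at once to obtain the geometric series $\mathbb{M}_{Z_\ell}^\ell(b)-1=\sum_{s\geq 1}T^s$ (hence $\mathbb{M}_{Z_\ell}^\ell(b)^{-1}=1-T$), whereas the paper peels off only the exterior block containing $1$ to get the equivalent recursion $\mathbb{M}_{Z_\ell}^\ell(b)-1=T\,\mathbb{M}_{Z_\ell}^\ell(b)$, and that you identify $\C_{Z_\ell}^\ell(M_{Z_\ell}^\ell(b)b)-1=M_{Z_\ell}^\ell(b)T$ by splitting $L_{M_{Z_\ell}^\ell(b)b}$ in the first slot rather than by left-multiplying the recursion by $M_{Z_\ell}^\ell(b)$.
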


\begin{proof}
For $m \geq 1$, we have
\[\bF((L_bZ_\ell)^m) = \sum_{\pi \in \B\N\C(\chi_{m, 0})}\K_\pi(\underbrace{L_bZ_\ell, \dots, L_bZ_\ell}_{m\,\text{entries}}).\]
For every partition $\pi \in \B\N\C(\chi_{m, 0})$, let $W_\pi$ denote the block of $\pi$ containing $1$, which is necessarily an exterior block. Rearrange the above sum (which may be done as it converges absolutely) by first choosing $s \in \{1, \dots, m\}$, $W =  \{1 = w_1 < \cdots < w_s\} \subset \{1, \dots, m\}$, and then summing over all $\pi \in \B\N\C(\chi_{m, 0})$ such that $W_\pi = W$, i.e.,
\[\bF((L_bZ_\ell)^m) = \sum_{s = 1}^m\sum_{\substack{W = \{1 = w_1 < \cdots < w_s\}\\W \subset \{1, \dots, m\}}}\sum_{\substack{\pi \in \B\N\C(\chi_{m, 0})\\W_\pi = W}}\K_\pi(\underbrace{L_bZ_\ell, \dots, L_bZ_\ell}_{m\,\text{entries}}).\]
Furthermore, using operator-valued conditionally bi-multiplicative properties, the right-most sum in the above expression is
\[b\K_{1_{\chi_{s, 0}}}(Z_\ell, L_{b_2}Z_\ell, \dots, L_{b_s}Z_\ell)\bF((L_bZ_\ell)^{m - w_s}),\]
where $b_k = \bE((L_bZ_\ell)^{w_k - w_{k - 1} - 1})b$.  Thus
\[\bF((L_bZ_\ell)^m) = \sum_{\substack{1 \leq s \leq m\\0 \leq i_1, \dots, i_s \leq m\\i_1 + \cdots + i_s = m - s}}b\K_{1_{\chi_{s, 0}}}(Z_\ell, L_{f(i_1)}Z_\ell, \dots, L_{f(i_{s - 1})}Z_\ell)\bF((L_bZ_\ell)^{i_s}),\]
where $f(k) = \bE((L_bZ_\ell)^k)b$. Note that
\[\sum_{k \geq 0}f(k) = M_{Z_\ell}^\ell(b)b \qand \sum_{k \geq 0}\bF((L_bZ_\ell)^k) = \mathbb{M}_{Z_\ell}^\ell(b).\]
Consequently, we obtain
\[\sum_{m \geq 1}\bF((L_bZ_\ell)^m) = \sum_{s \geq 1}b\K_{1_{\chi_{s, 0}}}(Z_\ell, L_{M_{Z_\ell}^\ell(b)b}Z_\ell, \dots, L_{M_{Z_\ell}^\ell(b)b}Z_\ell)\mathbb{M}_{Z_\ell}^\ell(b),\]
therefore
\[M_{Z_\ell}^\ell(b)\mathbb{M}_{Z_\ell}^\ell(b) = M_{Z_\ell}^\ell(b) + \sum_{s \geq 1}\K_{1_{\chi_{s, 0}}}(L_{M_{Z_\ell}^\ell(b)b}Z_\ell, L_{M_{Z_\ell}^\ell(b)b}Z_\ell, \dots, L_{M_{Z_\ell}^\ell(b)b}Z_\ell)\mathbb{M}_{Z_\ell}^\ell(b),\]
and hence
\[M_{Z_\ell}^\ell(b) - M_{Z_\ell}^\ell(b)\mathbb{M}_{Z_\ell}^\ell(b)^{-1} = \C_{Z_\ell}^\ell\left(M_{Z_\ell}^\ell(b)b\right) - 1,\]
which proves the first equation. The proof for the second equation is nearly identical once one uses the fact that $d \mapsto R_d$ is an anti-homomorphism.
\end{proof}

For $b, c, d \in \B$, $Z_\ell \in \A_\ell$, and $Z_r \in \A_r$, consider the following series of the pair $(Z_\ell, Z_r)$:
\begin{align*}
M_{(Z_\ell, Z_r)}(b, c, d) &= \sum_{m, n \geq 0}\bE((L_bZ_\ell)^m(R_dZ_r)^nR_c),\\
\mathbb{M}_{(Z_\ell, Z_r)}(b, c, d) &= \sum_{m, n \geq 0}\bF((L_bZ_\ell)^m(R_dZ_r)^nR_c),\\
\C_{(Z_\ell, Z_r)}(b, c, d) &= c + \sum_{m \geq 1}\K_{1_{\chi_{m, 0}}}(\underbrace{L_bZ_\ell, \dots, L_bZ_\ell}_{m - 1\,\text{entries}}, L_bZ_\ell L_c)\\
&\quad + \sum_{\substack{m \geq 0\\n \geq 1}}\K_{1_{\chi_{m, n}}}(\underbrace{L_bZ_\ell, \dots, L_bZ_\ell}_{m\,\text{entries}}, \underbrace{R_dZ_r, \dots, R_dZ_r}_{n - 1\,\text{entries}}, R_dZ_rR_c),
\end{align*}
which converge absolutely for $b, c, d$ sufficiently small by similar arguments as in \cite{S2015}*{Remarks 5.2 and 5.5}.

Notice if $(Z_{1, \ell}, Z_{1, r})$ and $(Z_{2, \ell}, Z_{2, r})$ are c-bi-free over $(\B, \D)$, then
\[\C_{(Z_{1, \ell} + Z_{2, \ell}, Z_{1, r} + Z_{2, r})}(b, c, d) - c = (\C_{(Z_{1, \ell}, Z_{1, r})}(b, c, d) - c) + (\C_{(Z_{2, \ell}, Z_{2, r})}(b, c, d) - c)\]
by Theorem \ref{VanishingEquiv}; that is, $\C_{(Z_\ell, Z_r)}(b, c, d) - c$ is an operator-valued conditionally bi-free partial $\R$-transform.

\begin{thm}\label{CR-Transform}
Let $(\A, \bE, \bF, \varepsilon)$ be a Banach $\B$-$\B$-non-commutative probability space with a pair of $(\B, \D)$-valued expectations, let $Z_\ell \in \A_\ell$, and let $Z_r \in \A_r$. Then
\begin{align*} 
& \C_{(Z_\ell, Z_r)}(M_{Z_\ell}^\ell(b)b,  M_{(Z_\ell, Z_r)}(b, c, d), dM_{Z_r}^r(d))    \\  
& =   M_{Z_\ell}^\ell(b)\mathbb{M}_{Z_\ell}^\ell(b)^{-1}\mathbb{M}_{(Z_\ell, Z_r)}(b, c, d)\mathbb{M}_{Z_r}^r(d)^{-1}M_{Z_r}^r(d) + M_{(Z_\ell, Z_r)}(b, c, d)  \nonumber  \\
& \quad + M_{Z_\ell}^\ell(b)(1 - \mathbb{M}_{Z_\ell}^\ell(b)^{-1})M_{(Z_\ell, Z_r)}(b, c, d) + M_{(Z_\ell, Z_r)}(b, c, d)(1 - \mathbb{M}_{Z_r}^r(d)^{-1})M_{Z_r}^r(d)  - M_{Z_\ell}^\ell(b)cM_{Z_r}^r(d) \nonumber
\end{align*}
for $b, c, d \in \B$ sufficiently small.
\end{thm}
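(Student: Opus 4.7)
\medskip

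The plan is to follow the combinatorial template of \cite{S2015}*{Section 5} for the operator-valued bi-free partial $\mathcal{R}$-transform, but adapted so that interior blocks of a bi-non-crossing partition contribute operator-valued bi-free cumulants $\kappa$ while exterior blocks contribute the conditionally bi-free cumulants $\K$, in accordance with Definitions \ref{CBFMomentPair} and \ref{OpVCBFCumulants}. The strategy is to expand the right-hand side rather than attack the left-hand side directly, identify each contributing term with the operator-valued c-bi-free cumulant it would produce, and resum.

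First I would expand $\mathbb{M}_{(Z_\ell, Z_r)}(b, c, d)$ by applying the moment-cumulant identity $\F_{1_\chi}(Z_1,\dots,Z_n) = \sum_{\pi \in \B\N\C(\chi)}\K_\pi(Z_1,\dots,Z_n)$ (which follows from Definition \ref{OpVCBFCumulants}(3)) to each $\bF((L_bZ_\ell)^m(R_dZ_r)^nR_c)$. Then, using Remark \ref{Decomposition} to decompose each $\pi \in \B\N\C(\chi_{m,n})$ into its $\chi$-interval sections $V_1,\dots,V_s$, I would invoke operator-valued conditional bi-multiplicativity of $(\kappa,\K)$ to factor $\K_\pi$ as a product across the $V_k$, each factor itself being expressible as a $\K$-cumulant over an exterior block whose entries are $L_bZ_\ell$ or $R_dZ_r$ decorated, via the reductions in Definition \ref{CondBiMulti}(4), by left/right multiplication by $\E_{\pi|_V}$-values coming from the interior blocks.

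Second I would reorganize the total sum according to the exterior block structure. For a fixed pattern of positions carrying the exterior $\K$-block on the left side (say $1 = p_1 < \cdots < p_k$) and on the right side (say $p_{k+1} < \cdots < p_{k+\ell}$ in $\prec_\chi$-order), I would sum over the arbitrary chains of $L_bZ_\ell$ or $R_dZ_r$ living strictly between consecutive exterior positions. By the bi-multiplicative rules for $\kappa$, each inner chain of length $j$ collapses to an $L_{\E\cdot b}$ or $R_{d \cdot \E}$ factor, and the geometric-style sum over $j\ge 0$ produces $L_{M_{Z_\ell}^\ell(b)b}$ or $R_{dM_{Z_r}^r(d)}$. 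The block of chains to the left of $1$, to the right of $m+n$, and the analogous runs between the last left exterior position and the first right exterior position (reading in $\prec_\chi$) sum to give the outer $M^\ell_{Z_\ell}(b)$, $M^r_{Z_r}(d)$, and the ``bridging'' $M_{(Z_\ell, Z_r)}(b, c, d)$ factors. Partitions that are ``connected'' through an exterior block bridging left and right then contribute exactly the non-constant part of $\C_{(Z_\ell,Z_r)}(M_{Z_\ell}^\ell(b)b, M_{(Z_\ell,Z_r)}(b,c,d), dM_{Z_r}^r(d))$.

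Third I would treat the partitions whose outermost structure is split into a purely left exterior section and a purely right exterior section (possibly empty on one side). These produce products involving $\C^\ell_{Z_\ell}(M^\ell_{Z_\ell}(b)b)$ and $\C^r_{Z_r}(dM^r_{Z_r}(d))$, which by Lemma \ref{CumulantTransform} equal $1 + M_{Z_\ell}^\ell(b) - M_{Z_\ell}^\ell(b)\mathbb{M}_{Z_\ell}^\ell(b)^{-1}$ and $1 + M_{Z_r}^r(d) - \mathbb{M}_{Z_r}^r(d)^{-1}M_{Z_r}^r(d)$. These generate precisely the correction terms $M_{Z_\ell}^\ell(b)(1 - \mathbb{M}_{Z_\ell}^\ell(b)^{-1})M_{(Z_\ell,Z_r)}(b,c,d)$ and $M_{(Z_\ell,Z_r)}(b,c,d)(1 - \mathbb{M}_{Z_r}^r(d)^{-1})M_{Z_r}^r(d)$ on the right-hand side, once the inner chains on each side have been summed to $\mathbb{M}_{Z_\ell}^\ell$ and $\mathbb{M}_{Z_r}^r$ respectively (the $\mathbb{M}$ appearing rather than $M$ because the exterior blocks are governed by $\F$ not $\bE$). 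Partitions with two disjoint exterior sections on both sides analogously yield the ``main'' product $M_{Z_\ell}^\ell(b)\mathbb{M}_{Z_\ell}^\ell(b)^{-1}\mathbb{M}_{(Z_\ell,Z_r)}(b,c,d)\mathbb{M}_{Z_r}^r(d)^{-1}M_{Z_r}^r(d)$, together with the isolated $-M_{Z_\ell}^\ell(b)cM_{Z_r}^r(d)$ coming from subtracting the overcounted constant-$c$ term (this $c$ is the constant of $\C_{(Z_\ell,Z_r)}$ that must be compensated in the bridging count). Collecting all contributions and applying Lemma \ref{CumulantTransform} once more yields the stated identity.

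The principal obstacle will be the bookkeeping in the second and third steps: one must check that each $\pi \in \B\N\C(\chi_{m,n})$ is counted exactly once when classified by its exterior block decomposition, and that the decorations produced by the interior-block reductions precisely match the arguments $M_{Z_\ell}^\ell(b)b$, $M_{(Z_\ell,Z_r)}(b,c,d)$, $dM_{Z_r}^r(d)$ appearing inside $\C_{(Z_\ell,Z_r)}$ on the left-hand side. The operator-valued (non-commutative) setting forces us to track whether each absorption is by an $L_{\cdot}$ or an $R_{\cdot}$, and the distinction between $\kappa$ and $\K$ requires the additional input that the sums of interior chains give $M$-series while the outer chains give $\mathbb{M}$-series; once this is correctly set up, the algebraic identity follows from linearity and Lemma \ref{CumulantTransform}.
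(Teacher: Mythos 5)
Your strategy coincides with the paper's: expand $\mathbb{M}_{(Z_\ell,Z_r)}(b,c,d)$ through the c-bi-free moment--cumulant relation, split $\B\N\C(\chi_{m,n})$ into the partitions with no block containing both left and right nodes (these factor and produce $\mathbb{M}_{Z_\ell}^\ell(b)\,c\,\mathbb{M}_{Z_r}^r(d)$) and those with a mixed ``bridging'' exterior block, resum the runs between the nodes of that block into moment series, and finish with Lemma \ref{CumulantTransform}. So the route is the right one, and the same as the paper's.

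There is, however, a concrete error in your bookkeeping that, as written, would not reproduce the stated formula. The runs flanking the bridging block $V$ --- the positions $\prec_\chi$-before $\min_{\prec_\chi}(V)$ and $\prec_\chi$-after $\max_{\prec_\chi}(V)$ --- are covered by blocks that are \emph{exterior} in $\pi$ (nothing can nest around $V$ without contradicting the minimality of $\min(V)$ among mixed blocks), so they resum to $\mathbb{M}_{Z_\ell}^\ell(b)b$ and $d\,\mathbb{M}_{Z_r}^r(d)$, not to $M_{Z_\ell}^\ell(b)$ and $M_{Z_r}^r(d)$ as your second step asserts; only the gaps strictly \emph{inside} $V$ are interior, reduce via $\E$, and give the $M$-series $M_{Z_\ell}^\ell(b)b$, $dM_{Z_r}^r(d)$, $M_{(Z_\ell,Z_r)}(b,c,d)$ appearing as the arguments of $\C_{(Z_\ell,Z_r)}$. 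Reconciling the outer $\mathbb{M}$-prefactors with these $M$-arguments is precisely what produces the conjugation $\mathbb{M}_{Z_\ell}^\ell(b)M_{Z_\ell}^\ell(b)^{-1}(\cdots)M_{Z_r}^r(d)^{-1}\mathbb{M}_{Z_r}^r(d)$ and hence, after solving for $\C_{(Z_\ell,Z_r)}$, the factors $M_{Z_\ell}^\ell(b)\mathbb{M}_{Z_\ell}^\ell(b)^{-1}$ and $\mathbb{M}_{Z_r}^r(d)^{-1}M_{Z_r}^r(d)$ in the statement; with your assignment these factors would never arise. Two smaller slips: the bridging block need not contain position $1$ (one must allow an arbitrary purely-left prefix, which is exactly where the outer $\mathbb{M}_{Z_\ell}^\ell$ comes from), and the main product $M_{Z_\ell}^\ell(b)\mathbb{M}_{Z_\ell}^\ell(b)^{-1}\mathbb{M}_{(Z_\ell,Z_r)}(b,c,d)\mathbb{M}_{Z_r}^r(d)^{-1}M_{Z_r}^r(d)$ is not contributed by the split partitions (those give only $\mathbb{M}_{Z_\ell}^\ell(b)c\mathbb{M}_{Z_r}^r(d)$) but by inverting the bridging-block identity. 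All of this is repairable within your framework, but the interior-versus-exterior assignment of $M$ versus $\mathbb{M}$ must be fixed at the point indicated.
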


\begin{rem}
Note that if $\B = \D = \bC$, $b = z$, $d = w$, and $c = 1$, then Theorem \ref{CR-Transform} produces exactly equation $(9)$ in \cite{GS2016}*{Theorem 5.6} for the scalar-valued setting. On the other hand, if $\B = \D$ and $\bE = \bF$, then  Theorem \ref{CR-Transform} produces exactly equation $(10)$ in \cite{S2015}*{Theorem 5.6} for the operator-valued bi-free setting.
\end{rem}

\begin{proof}[Proof of Theorem \ref{CR-Transform}]
For $m, n \geq 1$, let $\B\N\C_{\mathrm{vs}}(\chi_{m, n})$ denote the set of bi-non-crossing partitions where no block contains both left and right nodes.  Using operator-valued conditionally bi-multiplicativity, we obtain
\begin{align*}
\bF & ((L_bZ_\ell)^m(R_dZ_r)^nR_c)\\
&= \sum_{\pi \in \B\N\C_{\mathrm{vs}}(\chi_{m, n})}\K_\pi(\underbrace{L_bZ_\ell, \dots, L_bZ_\ell}_{m\,\text{entries}}, \underbrace{R_dZ_r, \dots, R_dZ_r}_{n - 1\,\text{entries}}, R_dZ_rR_c)\\
&\quad + \sum_{\substack{\pi \in \B\N\C(\chi_{m, n})\\\pi \notin \B\N\C_{\mathrm{vs}}(\chi_{m, n})}}\K_\pi(\underbrace{L_bZ_\ell, \dots, L_bZ_\ell}_{m\,\text{entries}}, \underbrace{R_dZ_r, \dots, R_dZ_r}_{n - 1\,\text{entries}}, R_dZ_rR_c)\\
&= \bF((L_bZ_\ell)^m)c\bF((R_dZ_r)^n) + \Theta_{m, n}(b, c, d),
\end{align*}
where $\Theta_{m, n}(b, c, d)$ denotes the sum
\[\sum_{\substack{\pi \in \B\N\C(\chi_{m, n})\\\pi \notin \B\N\C_{\mathrm{vs}}(\chi_{m, n})}}\K_\pi(\underbrace{L_bZ_\ell, \dots, L_bZ_\ell}_{m\,\text{entries}}, \underbrace{R_dZ_r, \dots, R_dZ_r}_{n - 1\,\text{entries}}, R_dZ_rR_c).\]

For every partition $\pi \in \B\N\C(\chi_{m, n}) \setminus \B\N\C_{\mathrm{vs}}(\chi_{m, n})$, let $V_\pi$ denote the block of $\pi$ with both left and right indices such that $\min(V_\pi)$ is the smallest among all blocks of $\pi$ with this property. Note that $V_\pi$ is necessarily an exterior block. Rearrange the sum in $\Theta_{m, n}(b, c, d)$ (which may be done as it converges absolutely) by first choosing $s \in \{1, \dots, m\}$, $t \in \{1, \dots, n\}$, $V \subset \{1, \dots, m + n\}$ such that
\[V_\ell := V \cap \{1, \dots, m\} = \{u_1 < \cdots < u_s\} \qand V_r := V \cap \{m + 1, \dots, m + n\} = \{v_1 < \cdots < v_t\},\]
and then summing over all $\pi \in \B\N\C(\chi_{m, n}) \setminus \B\N\C_{\mathrm{vs}}(\chi_{m, n})$ such that $V_\pi = V$.  The result is
\[\Theta_{m, n}(b, c, d) = \sum_{s = 1}^m\sum_{t = 1}^n\sum_{\substack{V\\V_\ell = \{u_1 < \cdots < u_s\}\\V_r = \{v_1 < \cdots < v_t\}}}\sum_{\substack{\pi \in \B\N\C(\chi_{m, n})\\\pi \notin \B\N\C_{\mathrm{vs}}(\chi_{m, n})\\V_\pi = V}}\K_\pi(\underbrace{L_bZ_\ell, \dots, L_bZ_\ell}_{m\,\text{entries}}, \underbrace{R_dZ_r, \dots, R_dZ_r}_{n - 1\,\text{entries}}, R_dZ_rR_c).\]

Using operator-valued conditionally bi-multiplicative properties, the right-most sum in the above expression is
\[F_b\K_{1_{\chi_{s, t}}}(Z_\ell, L_{b_2}Z_\ell, \dots, L_{b_s}Z_\ell, Z_r, R_{d_2}Z_r, \dots, R_{d_{t - 1}}Z_r, R_{d_t}Z_rR_{\bE((L_bZ_\ell)^{m - u_s}(R_dZ_r)^{n - v_t}R_c)})G_d,\]
where
\begin{gather*}
b_k = \bE((L_bZ_\ell)^{u_k - u_{k - 1} - 1})b, \quad d_k = d\bE((R_dZ_r)^{v_k - v_{k - 1} - 1}), \\ F_b = \bF((L_bZ_\ell)^{u_1 - 1})b, \qand G_d = d\bF((R_dZ_r)^{v_1 - 1}). 
\end{gather*}
Consequently, we obtain that $\Theta_{m, n}(b, c, d)$ equals
\begin{multline}\label{ThetaSum}
\sum_{\substack{1 \leq s \leq m\\0 \leq i_0, i_1, \dots, i_s \leq m\\i_0 + i_1 + \cdots + i_s = m - s}}\sum_{\substack{1 \leq t \leq n\\0 \leq j_0, j_1, \dots, j_t \leq n\\j_0 + j_1 + \cdots + j_t = n - t}}\\
F(i_1)\K_{1_{\chi_{s, t}}}(Z_\ell, L_{f(i_2)}Z_\ell, \dots, L_{f(i_s)}Z_\ell, Z_r, R_{g(j_2)}Z_r, \dots, R_{g(j_t - 1)}Z_r, R_{g(j_t)}Z_rR_{\bE((L_bZ_\ell)^{i_0}(R_dZ_r)^{j_0}R_c)})G(j_1),
\end{multline}
where 
\[f(k) = \bE((L_bZ_\ell)^k)b, \quad g(k) = d\bE((R_dZ_r)^k), \quad F(k) = \bF((L_bZ_\ell)^k)b, \qand G(k) = d\bF((R_dZ_r)^k).\]

Note that
\[\sum_{k \geq 0}f(k) = M_{Z_\ell}^\ell(b)b, \quad \sum_{k \geq 0}g(k) = dM_{Z_r}^r(d), \quad \sum_{k \geq 0}F(k) = \mathbb{M}_{Z_\ell}^\ell(b)b, \qand \sum_{k \geq 0}G(k) = d\mathbb{M}_{Z_r}^r(d).\]
On the other hand, expanding $\mathbb{M}_{(Z_\ell, Z_r)}(b, c, d)$ using the fact everything converges absolutely produces
\begin{align*}
&\mathbb{M}_{(Z_\ell, Z_r)}(b, c, d)\\
&= c + \sum_{m \geq 1}\bF((L_bZ_\ell)^mR_c) + \sum_{n \geq 1}\bF((R_dZ_r)^nR_c) + \sum_{m, n \geq 1}\bF((L_bZ_\ell)^m(R_dZ_r)^nR_c)\\
&= c + \sum_{m \geq 1}\bF((L_bZ_\ell)^mR_c) + \sum_{n \geq 1}\bF((R_dZ_r)^nR_c) + \sum_{m, n \geq 1}\bF((L_bZ_\ell)^m)c\bF((R_dZ_r)^n) + \sum_{m, n \geq 1}\Theta_{m, n}(b, c, d)\\
&= \sum_{m, n \geq 0}\bF((L_bZ_\ell)^m)c\bF((R_dZ_r)^n) + \sum_{m, n \geq 1}\Theta_{m, n}(b, c, d)\\
&= \mathbb{M}_{Z_\ell}^\ell(b)c\mathbb{M}_{Z_r}^r(d) + \sum_{m, n \geq 1}\Theta_{m, n}(b, c, d).
\end{align*}
By rearranging the remaining sum involving $\Theta_{m, n}(b, c, d)$ to sum over all fixed $s, t$ in equation \eqref{ThetaSum}, and by choosing $b, d$ sufficiently small so that $M_{Z_\ell}^\ell(b)$, $M_{Z_r}^r(d)$, $\mathbb{M}_{Z_\ell}^\ell(b)$, and $\mathbb{M}_{Z_r}^r(d)$ are invertible, we obtain
\begin{align*}
&\sum_{m, n \geq 1}\Theta_{m, n}(b, c, d)\\
&= \sum_{s, t \geq 1}\mathbb{M}_{Z_\ell}^\ell(b)b\\
&\quad \times \K_{1_{\chi_{s, t}}}(\underbrace{Z_\ell, L_{M_{Z_\ell}^\ell(b)b}Z_\ell, \dots, L_{M_{Z_\ell}^\ell(b)b}Z_\ell}_{s\,\text{entries}}, \underbrace{Z_r, R_{dM_{Z_r}^r(d)}Z_r, \dots, R_{dM_{Z_r}^r(d)}Z_r}_{t - 1\,\text{entries}}, R_{dM_{Z_r}^r(d)}Z_rR_{M_{(Z_\ell, Z_r)}(b, c, d)}) \\
&\quad \times d\mathbb{M}_{Z_r}^r(d)\\
&= \mathbb{M}_{Z_\ell}^\ell(b)M_{Z_\ell}^\ell(b)^{-1}\\
&\quad \times\sum_{s, t \geq 1}\left(\K_{1_{\chi_{s, t}}}(\underbrace{L_{M_{Z_\ell}^\ell(b)b}Z_\ell, \dots, L_{M_{Z_\ell}^\ell(b)b}Z_\ell}_{s\,\text{entries}}, \underbrace{R_{dM_{Z_r}^r(d)}Z_r, \dots, R_{dM_{Z_r}^r(d)}Z_r}_{t - 1\,\text{entries}}, R_{dM_{Z_r}^r(d)}Z_rR_{M_{(Z_\ell, Z_r)}(b, c, d)})\right)\\
&\quad \times M_{Z_r}^r(d)^{-1}\mathbb{M}_{Z_r}^r(d)\\
&= \mathbb{M}_{Z_\ell}^\ell(b)M_{Z_\ell}^\ell(b)^{-1}[\C_{(Z_\ell, Z_r)}(M_{Z_\ell}^\ell(b)b, M_{(Z_\ell, Z_r)}(b, c, d), dM_{Z_r}^r(d)) - \C_{Z_\ell}^\ell(M_{Z_\ell}^\ell(b)b) M_{(Z_\ell, Z_r)}(b, c, d)\\
&\quad -  M_{(Z_\ell, Z_r)}(b, c, d)\C_{Z_r}^r(dM_{Z_r}^r(d)) +  M_{(Z_\ell, Z_r)}(b, c, d)]M_{Z_r}^r(d)^{-1}\mathbb{M}_{Z_r}^r(d)\\
&= \mathbb{M}_{Z_\ell}^\ell(b)M_{Z_\ell}^\ell(b)^{-1}\C_{(Z_\ell, Z_r)}(M_{Z_\ell}^\ell(b)b, M_{(Z_\ell, Z_r)}(b, c, d), dM_{Z_r}^r(d))M_{Z_r}^r(d)^{-1}\mathbb{M}_{Z_r}^r(d)\\
&\quad - \mathbb{M}_{Z_\ell}^\ell(b)M_{Z_\ell}^\ell(b)^{-1}(1 + M_{Z_\ell}^\ell(b) - M_{Z_\ell}^\ell(b)\mathbb{M}_{Z_\ell}^\ell(b)^{-1})M_{(Z_\ell, Z_r)}(b, c, d)M_{Z_r}^r(d)^{-1}\mathbb{M}_{Z_r}^r(d)\\
&\quad - \mathbb{M}_{Z_\ell}^\ell(b)M_{Z_\ell}^\ell(b)^{-1}M_{(Z_\ell, Z_r)}(b, c, d)(1 + M_{Z_r}^r(d) - \mathbb{M}_{Z_r}^r(d)^{-1}M_{Z_r}^r(d))M_{Z_r}^r(d)^{-1}\mathbb{M}_{Z_r}^r(d)\\
&\quad + \mathbb{M}_{Z_\ell}^\ell(b)M_{Z_\ell}^\ell(b)^{-1}M_{(Z_\ell, Z_r)}(b, c, d)M_{Z_r}^r(d)^{-1}\mathbb{M}_{Z_r}^r(d)\\
&= \mathbb{M}_{Z_\ell}^\ell(b)M_{Z_\ell}^\ell(b)^{-1}\left(\C_{(Z_\ell, Z_r)}(M_{Z_\ell}^\ell(b)b, M_{(Z_\ell, Z_r)}(b, c, d), dM_{Z_r}^r(d)) - M_{(Z_\ell, Z_r)}(b, c, d)\right)M_{Z_r}^r(d)^{-1}\mathbb{M}_{Z_r}^r(d)\\
&\quad - (\mathbb{M}_{Z_\ell}^\ell(b) - 1)M_{(Z_\ell, Z_r)}(b, c, d)M_{Z_r}^r(d)^{-1}\mathbb{M}_{Z_r}^r(d) - \mathbb{M}_{Z_\ell}^\ell(b)M_{Z_\ell}^\ell(b)^{-1}M_{(Z_\ell, Z_r)}(b, c, d)(\mathbb{M}_{Z_r}^r(d) - 1),
\end{align*}
where the fourth equality follows from Lemma \ref{CumulantTransform}. The result now follows by combining these equations.
\end{proof}

\section{Operator-valued conditionally bi-free limit theorems}\label{sec:limit-thms}

In this section, operator-valued conditionally bi-free limit theorems are studied. Recall first from Definition \ref{MCSeriesDefn} that if $\Z = \{Z_i\}_{i \in I} \sqcup \{Z_j\}_{j \in J}$ is a two-faced family in a $\B$-$\B$-non-commutative probability space with a pair of $(\B, \D)$-valued expectations $(\A, \bE, \bF, \varepsilon)$, then the moment and cumulant series $(\nu^\Z, \mu^\Z)$ and $(\rho^\Z, \eta^\Z)$ completely describe the joint distribution of $\Z$ with respect to $(\bE, \bF)$. In that which follows, given a bi-non-crossing partition $\pi \in \B\N\C(\chi)$ it is often convenient to define
\[(\nu_\omega^\Z)_\pi(b_1, \dots, b_{n - 1}), \quad (\mu_\omega^\Z)_\pi(b_1, \dots, b_{n - 1}), \quad (\rho_\omega^\Z)_\pi(b_1, \dots, b_{n - 1}), \qand (\eta_\omega^\Z)_\pi(b_1, \dots, b_{n - 1})\]
by using operator-valued conditionally bi-multiplicativity and replacing $1_{\chi_\omega}$ with $\pi$ in Notation \ref{MCSeries}.

\subsection{The operator-valued c-bi-free central limit theorem}

Like any non-commutative probability theory, the first result is a central limit theorem in the operator-valued c-bi-free setting.

\begin{defn}
A two-faced family $\Z = ((Z_i)_{i \in I}, (Z_j)_{j \in J})$ in a $\B$-$\B$-non-commutative probability space with a pair of $(\B, \D)$-valued expectations $(\A, \bE, \bF, \varepsilon)$ is said to have a \textit{centred $(\B, \D)$-valued c-bi-free Gaussian distribution} if
\[\rho_\omega^\Z(b_1, \dots, b_{n - 1}) = \eta_\omega^\Z(b_1, \dots, b_{n - 1}) = 0\]
for all $n \geq 1$ with $n \neq 2$, $\omega: \{1, \dots, n\} \to I \sqcup J$, and $b_1, \dots, b_{n - 1} \in \mathcal{B}$.
\end{defn}

In view of the definition above and the moment-cumulant formulae, it is enough to specify $\nu_\omega^\Z(b)$ and $\mu_\omega^\Z(b)$ for $\omega: \{1, 2\} \to I \sqcup J$ and $b \in \B$.

\begin{defn}
Let $I$ and $J$ be non-empty disjoint finite sets, let $M_{|I \sqcup J|}(\B)$ and $M_{|I \sqcup J|}(\D)$ denote the $|I \sqcup J|$ by $|I \sqcup J|$ matrices with entries in $\B$ and $\D$ respectively, and let
\[\sigma: \B \to M_{|I \sqcup J|}(\B), \quad b \mapsto (\sigma_{k, \ell}(b))_{k, \ell \in I \sqcup J} \qand \tau: \B \to M_{|I \sqcup J|}(\D), \quad b \mapsto (\tau_{k, \ell}(b))_{k, \ell \in I \sqcup J}\]
be linear maps. A two-faced family $\Z = ((Z_i)_{i \in I}, (Z_j)_{j \in J})$ in a $\B$-$\B$-non-commutative probability space with a pair of $(\B, \D)$-valued expectations $(\A, \bE, \bF, \varepsilon)$ is said to have a \textit{centred $(\B, \D)$-valued c-bi-free Gaussian distribution with covariance matrices $(\sigma, \tau)$} if, in addition to having a centred $(\B, \D)$-valued c-bi-free Gaussian distribution,
\[\nu_\omega^\Z(b) = \sigma_{\omega(1), \omega(2)}(b) \in \B \qand \mu_\omega^\Z(b) = \tau_{\omega(1), \omega(2)}(b) \in \D\]
for all $\omega: \{1, 2\} \to I \sqcup J$ and $b \in \B$.
\end{defn}

\begin{thm}
Let $\{\Z_m = ((Z_{m; i})_{i \in I}, (Z_{m; j})_{j \in J})\}_{m = 1}^\infty$ be a sequence of  two-faced families in a Banach $\B$-$\B$-non-commutative probability space $(\A, \bE, \bF, \varepsilon)$ which are c-bi-free over $(\B, \D)$. Moreover assume
\begin{enumerate}[$\qquad(1)$]
\item $\bE(Z_{m; k}) = \bF(Z_{m; k}) = 0$ for all $m \geq 1$ and $k \in I \sqcup J$;
	
\item $\sup_{m \geq 1}\|\nu_\omega^{\Z_m}(b_1, \dots, b_{n - 1})\| < \infty$ and $\sup_{m \geq 1}\|\mu_\omega^{\Z_m}(b_1, \dots, b_{n - 1})\| < \infty$ for all $n \geq 1$, $\omega: \{1, \dots, n\} \to I \sqcup J$, and $b_1, \dots, b_{n - 1} \in \B$;

\item there are linear maps $\sigma: \B \to M_{|I \sqcup J|}(\B)$ and $\tau: \B \to M_{|I \sqcup J|}(\D)$ such that
\[\lim_{N \to \infty}\frac{1}{N}\sum_{m = 1}^N\nu_\omega^{\Z_m}(b) = \sigma_{\omega(1), \omega(2)}(b) \qand \lim_{N \to \infty}\frac{1}{N}\sum_{m = 1}^N\mu_\omega^{\Z_m}(b) = \tau_{\omega(1), \omega(2)}(b)\]
for all $\omega: \{1, 2\} \to I \sqcup J$ and $b \in \B$.
\end{enumerate}
Then the two-faced families $\{\S_N = ((S_{N ; i})_{i \in I}, (S_{N ; j})_{j \in J})\}_{N = 1}^\infty$, defined by
\[S_{N ; k} = \frac{1}{\sqrt{N}}\sum_{m = 1}^NZ_{m; k}, \quad k \in I \sqcup J,\]
converges in distribution to a two-faced family $\Y = ((Y_i)_{i \in I}, (Y_j)_{j \in J})$ which has a centred $(\B, \D)$-valued c-bi-free Gaussian distribution with covariance matrices $(\sigma, \tau)$.
\end{thm}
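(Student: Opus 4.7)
The plan is to follow the standard route for operator-valued central limit theorems: recast everything in terms of cumulants, invoke the vanishing of mixed cumulants from Theorem~\ref{VanishingEquiv}, and estimate each resulting term under the normalisation $N^{-1/2}$.

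Fix $n \geq 1$, $\omega \colon \{1, \dots, n\} \to I \sqcup J$, and $b_1, \dots, b_{n-1} \in \B$. Using multilinearity of $\kappa_{1_{\chi_\omega}}$ and $\K_{1_{\chi_\omega}}$ in each of their $\A$-valued arguments, I would expand
\[
\rho_\omega^{\S_N}(b_1, \dots, b_{n-1}) = \frac{1}{N^{n/2}} \sum_{(m_1, \dots, m_n) \in \{1, \dots, N\}^n} \kappa_{1_{\chi_\omega}}\!\bigl(Z_{m_1;\omega(1)}, C_{b_1}^{\omega(2)} Z_{m_2;\omega(2)}, \dots, C_{b_{n-1}}^{\omega(n)} Z_{m_n;\omega(n)}\bigr),
\]
with an analogous expansion for $\eta_\omega^{\S_N}$ in which $\kappa_{1_{\chi_\omega}}$ is replaced by $\K_{1_{\chi_\omega}}$. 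Since $\{\Z_m\}$ is c-bi-free over $(\B, \D)$, Theorem~\ref{VanishingEquiv} forces every term with the $m_i$ not all equal to vanish, so only the diagonal $m_1 = \cdots = m_n$ contributes, yielding
\[
\rho_\omega^{\S_N}(b_1, \dots, b_{n-1}) = \frac{1}{N^{n/2}} \sum_{m = 1}^N \rho_\omega^{\Z_m}(b_1, \dots, b_{n-1})
\]
and the same identity with $\rho$ replaced by $\eta$.

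Next comes a split by $n$. For $n = 1$ both cumulants coincide with $\bE(Z_{m;\omega(1)})$ and $\bF(Z_{m;\omega(1)})$, which vanish by centering. For $n = 2$, the only other bi-non-crossing partition is $0_{\chi_\omega}$, whose two singleton blocks are both exterior; the corresponding $\K_{0_{\chi_\omega}}$ and $\kappa_{0_{\chi_\omega}}$ therefore factor as products of first-order quantities $\bF(Z_{m;\omega(k)})$ and $\bE(Z_{m;\omega(k)})$, all of which vanish. Hence $\rho_\omega^{\Z_m}(b) = \nu_\omega^{\Z_m}(b)$ and $\eta_\omega^{\Z_m}(b) = \mu_\omega^{\Z_m}(b)$, and assumption~$(3)$ produces
\[
\rho_\omega^{\S_N}(b) \to \sigma_{\omega(1), \omega(2)}(b) \qand \eta_\omega^{\S_N}(b) \to \tau_{\omega(1), \omega(2)}(b).
\]
For $n \geq 3$, inductively inverting the moment-cumulant formulae expresses the cumulants as polynomial expressions in the moments, so assumption~$(2)$ yields a uniform bound $\|\rho_\omega^{\Z_m}(b_1, \dots, b_{n-1})\|, \|\eta_\omega^{\Z_m}(b_1, \dots, b_{n-1})\| \leq C_n$; consequently both $\rho_\omega^{\S_N}$ and $\eta_\omega^{\S_N}$ are dominated in norm by $C_n N^{1 - n/2} \to 0$.

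Finally, Lemma~\ref{Existence} produces a two-faced family $\Y$ in a suitable $\B$-$\B$-non-commutative probability space with a pair of $(\B, \D)$-valued expectations whose cumulants are those of a centred $(\B, \D)$-valued c-bi-free Gaussian distribution with covariance matrices $(\sigma, \tau)$. The moment-cumulant formulae implicit in Definitions~\ref{CBFMomentPair} and \ref{OpVCBFCumulants} then convert the cumulant convergence above into $\nu_\omega^{\S_N}(b_1, \dots, b_{n-1}) \to \nu_\omega^{\Y}(b_1, \dots, b_{n-1})$ and $\mu_\omega^{\S_N}(b_1, \dots, b_{n-1}) \to \mu_\omega^{\Y}(b_1, \dots, b_{n-1})$, which is precisely convergence in distribution. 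The principal technical point is the reduction to the diagonal sum: verifying that $\kappa_{1_{\chi_\omega}}$ and $\K_{1_{\chi_\omega}}$ vanish on arguments of the form $(\ldots, C_{b_{i-1}}^{\omega(i)} Z_{m_i;\omega(i)}, \ldots)$ whenever the $m_i$ are not all equal. This follows by applying Theorem~\ref{VanishingEquiv} to the pairs of $\B$-algebras generated by each $\Z_m$ together with $\varepsilon(\B \otimes 1)$ on the left and $\varepsilon(1 \otimes \B^{\mathrm{op}})$ on the right, since each $C_{b_{i-1}}^{\omega(i)}$ lies in every such pair.
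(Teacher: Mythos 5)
Your proposal is correct and follows essentially the same route as the paper: reduce to showing convergence of the cumulant series, use additivity (i.e.\ vanishing of mixed cumulants via Theorem \ref{VanishingEquiv}) to obtain $\rho_\omega^{\S_N} = N^{-n/2}\sum_{m=1}^N \rho_\omega^{\Z_m}$ and likewise for $\eta$, then treat $n=1$ by centering, $n=2$ by the identification of second-order cumulants with second-order moments, and $n\geq 3$ by the uniform bound from assumption $(2)$ together with the decay $N^{1-n/2}\to 0$. Your explicit invocation of Lemma \ref{Existence} to realize the limit family $\Y$ is a small addition the paper leaves implicit, but the argument is the same.
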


\begin{proof}
Since the cumulant series uniquely determine the joint distributions, it suffices to show that
\[\lim_{N \to \infty}\rho_\omega^{\S_N}(b_1, \dots, b_{n - 1}) = \rho_\omega^\Y(b_1, \dots, b_{n - 1}) \qand \lim_{N \to \infty}\eta_\omega^{\S_N}(b_1, \dots, b_{n - 1}) = \eta_\omega^\Y(b_1, \dots, b_{n - 1})\]
for all $n \geq 1$, $\omega: \{1, \dots, n\} \to I \sqcup J$, and $b_1, \dots, b_{n - 1} \in \B$. By definitions, this means
\[\lim_{N \to \infty}\rho_\omega^{\S_N}(b_1, \dots, b_{n - 1}) = \lim_{N \to \infty}\eta_\omega^{\S_N}(b_1, \dots, b_{n - 1}) = 0\]
for all $\omega: \{1, \dots, n\} \to I \sqcup J$ such that $n \neq 2$,
\[\lim_{N \to \infty}\rho_\omega^{\S_N}(b) = \sigma_{\omega(1), \omega(2)}(b) \qand \lim_{N \to \infty}\eta_\omega^{\S_N}(b) = \tau_{\omega(1), \omega(2)}(b)\]
for all $\omega: \{1, 2\} \to I \sqcup J$ and $b \in \B$. 

For fixed $n \geq 1$, $\omega: \{1, \dots, n\} \to I \sqcup J$, and $b_1, \dots, b_{n - 1} \in \B$, by the additive and multilinear properties of cumulants, we have
\begin{align*}
E &:= \lim_{N \to \infty}\rho_\omega^{\S_N}(b_1, \dots, b_{n - 1}) = \lim_{N \to \infty}\frac{1}{N^{n/2}}\sum_{m = 1}^N\rho_\omega^{\Z_m}(b_1, \dots, b_{n - 1}),\\
F &:= \lim_{N \to \infty}\eta_\omega^{\S_N}(b_1, \dots, b_{n - 1}) = \lim_{N \to \infty}\frac{1}{N^{n/2}}\sum_{m = 1}^N\eta_\omega^{\Z_m}(b_1, \dots, b_{n - 1}).
\end{align*} 
If $n = 1$, then  
\begin{align*}
E &= \lim_{N \to \infty}\frac{1}{\sqrt{N}}\sum_{m = 1}^N\bE(Z_{m; \omega(1)}) = 0,\\
F &= \lim_{N \to \infty}\frac{1}{\sqrt{N}}\sum_{m = 1}^N\bF(Z_{m; \omega(1)}) = 0
\end{align*}
by assumption $(1)$. If $n \geq 3$, then assumption $(2)$ and operator-valued conditionally bi-multiplicativity imply
\[\sup_{m \geq 1}\|\rho_\omega^{\Z_m}(b_1, \dots, b_{n - 1})\| := B < \infty \qand \sup_{m \geq 1}\|\eta_\omega^{\Z_m}(b_1, \dots, b_{n - 1})\| := D < \infty,\]
hence
\[\|E\| \leq \lim_{N \to \infty}\frac{B}{N^{(n - 2)/2}} = 0 \qand \|F\| \leq \lim_{N \to \infty}\frac{D}{N^{(n - 2)/2}} = 0.\]
Otherwise $n = 2$ and
\[E = \lim_{N \to \infty}\frac{1}{N}\sum_{m = 1}^N\rho_\omega^{\Z_m}(b) = \lim_{N \to \infty}\frac{1}{N}\sum_{m = 1}^N\nu_\omega^{\Z_m}(b) = \sigma_{\omega(1), \omega(2)}(b),\]
and similarly $F = \tau_{\omega(1), \omega(2)}(b)$, for all $\omega: \{1, 2\} \to I \sqcup J$ and $b \in \B$ by assumptions $(1)$ and $(3)$.
\end{proof}

\subsection{The operator-valued compound c-bi-free Poisson limit theorem}

The next result is a Poisson type limit theorem in the operator-valued c-bi-free setting. In what follows, all two-faced families are assumed to have non-empty disjoint left and right index sets $I$ and $J$, respectively. To formulate the statement, we introduce the following notation.

Let $(\nu_1, \mu_1)$ and $(\nu_2, \mu_2)$ be the moment series of two-faced families. For $\lambda \in \bR$, denote by
\[(\lambda\nu_1 + (1 - \lambda)\nu_2, \lambda\mu_1 + (1 - \lambda)\mu_2)\]
the moment series of some two-faced family such that
\[(\lambda\nu_1 + (1 - \lambda)\nu_2)_\omega(b_1, \dots, b_{n - 1}) = \lambda(\nu_1)_\omega(b_1, \dots, b_{n - 1}) + (1 - \lambda)(\nu_2)_\omega(b_1, \dots, b_{n - 1})\]
and
\[(\lambda\mu_1 + (1 - \lambda)\mu_2)_\omega(b_1, \dots, b_{n - 1}) = \lambda(\mu_1)_\omega(b_1, \dots, b_{n - 1}) + (1 - \lambda)(\mu_2)_\omega(b_1, \dots, b_{n - 1})\]
for all $n \geq 1$, $\omega: \{1, \dots, n\} \to I \sqcup J$, and $b_1, \dots, b_{n - 1} \in \B$. Such a realization always exists by similar (and simpler) constructions as in the proofs of \cite{S2015}*{Lemma 3.8} and Lemma \ref{Existence}. Moreover, let $(\nu^\delta, \mu^\delta)$ be the special moment series such that
\[\nu_\omega^\delta(b_1, \dots, b_{n - 1}) = \mu_\omega^\delta(b_1, \dots, b_{n - 1}) = 0\]
for all $n \geq 1$, $\omega: \{1, \dots, n\} \to I \sqcup J$, and $b_1, \dots, b_{n - 1} \in \B$.

\begin{defn}
Let $(\nu, \mu)$ be the moment series of some two-faced family and let $\lambda \in \bR$. A two-faced family $\Z = ((Z_i)_{i \in I}, (Z_j)_{j \in J})$ in a $\B$-$\B$-non-commutative probability space with a pair of $(\B, \D)$-valued expectations $(\A, \bE, \bF, \varepsilon)$ is said to have a \textit{$(\B, \D)$-valued compound c-bi-free Poisson distribution with rate $\lambda$ and jump distribution $(\nu, \mu)$} if
\[\rho_\omega^\Z(b_1, \dots, b_{n - 1}) = \lambda\nu_\omega(b_1, \dots, b_{n - 1}) \qand \eta_\omega^\Z(b_1, \dots, b_{n - 1}) = \lambda\mu_\omega(b_1, \dots, b_{n - 1})\]
for all $n \geq 1$, $\omega: \{1, \dots, n\} \to I \sqcup J$, and $b_1, \dots, b_{n - 1} \in \B$.
\end{defn}

\begin{thm}
Let $(\nu, \mu)$ be the moment series of some two-faced family, let $\lambda \in \bR$, and consider the sequence $\{(\nu_N, \mu_N)\}_{N = 1}^\infty$ of moment series defined by
\[\nu_N = \left(1 - \frac{\lambda}{N}\right)\nu^\delta + \frac{\lambda}{N}\nu \qand \mu_N = \left(1 - \frac{\lambda}{N}\right)\mu^\delta + \frac{\lambda}{N}\mu.\]
If $\{\Z_{N; m} = ((Z_{N; m; i})_{i \in I}, (Z_{N; m; j})_{j \in J})\}_{m = 1}^N$ is a sequence of identically distributed two-faced families in a $\B$-$\B$-non-commutative probability space with a pair of $(\B, \D)$-valued expectations $(\A, \bE, \bF, \varepsilon)$ which are c-bi-free over $(\B, \D)$ with moment series $(\nu_N, \mu_N)$, then the two-faced families $\{\S_N = ((S_{N ; i})_{i \in I}, (S_{N ; j})_{j \in J})\}_{N = 1}^\infty$, defined by
\[S_{N ; k} = \sum_{m = 1}^NZ_{N; m; k}, \quad k \in I \sqcup J,\]
converges in distribution to a two-faced family $\Z = ((Z_i)_{i \in I}, (Z_j)_{j \in J})$ which has a $(\B, \D)$-valued compound c-bi-free Poisson distribution with rate $\lambda$ and jump distribution $(\nu, \mu)$.
\end{thm}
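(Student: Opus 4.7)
The plan is to prove convergence in distribution by establishing the convergence of the associated operator-valued bi-free and conditionally bi-free cumulant series, which uniquely determine the joint $(\B, \D)$-valued distribution of a two-faced family via the moment-cumulant relations of Definitions \ref{MomentCumulant} and \ref{OpVCBFCumulants}. Concretely, it suffices to show that for every $n \geq 1$, every $\omega: \{1, \dots, n\} \to I \sqcup J$, and all $b_1, \dots, b_{n-1} \in \B$,
\[
\lim_{N \to \infty} \rho^{\S_N}_\omega(b_1, \dots, b_{n-1}) = \lambda\,\nu_\omega(b_1, \dots, b_{n-1}) \qand \lim_{N \to \infty} \eta^{\S_N}_\omega(b_1, \dots, b_{n-1}) = \lambda\,\mu_\omega(b_1, \dots, b_{n-1}).
\]

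The first step is to linearize the cumulants of the sum by exploiting c-bi-freeness. Theorem \ref{VanishingEquiv} asserts that all mixed operator-valued bi-free and conditionally bi-free cumulants among $\{\Z_{N;m}\}_{m=1}^N$ vanish. Combined with multilinearity and the identical distribution of the $\Z_{N;m}$, this yields
\[
\rho^{\S_N}_\omega(b_1, \dots, b_{n-1}) = N\,\rho^{\Z_{N;1}}_\omega(b_1, \dots, b_{n-1}) \qand \eta^{\S_N}_\omega(b_1, \dots, b_{n-1}) = N\,\eta^{\Z_{N;1}}_\omega(b_1, \dots, b_{n-1}).
\]
Since $\nu^\delta$ and $\mu^\delta$ are identically zero, the prescribed moment series simplify to $(\nu_N)_\omega = (\lambda/N)\nu_\omega$ and $(\mu_N)_\omega = (\lambda/N)\mu_\omega$, so it remains to show that $N\rho^{\Z_{N;1}}_\omega \to \lambda\nu_\omega$ and $N\eta^{\Z_{N;1}}_\omega \to \lambda\mu_\omega$ as $N \to \infty$.

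I would establish both limits simultaneously by induction on $n$. The case $n = 1$ is immediate, since $\rho^{\Z_{N;1}}_\omega = (\nu_N)_\omega = (\lambda/N)\nu_\omega$ and $\eta^{\Z_{N;1}}_\omega = (\mu_N)_\omega = (\lambda/N)\mu_\omega$ by Definition \ref{MCSeriesDefn}, so exact equality with the target is obtained after multiplying by $N$. For the inductive step, apply the moment-cumulant expansions
\[
(\nu_N)_\omega = \sum_{\pi \in \B\N\C(\chi_\omega)} (\rho^{\Z_{N;1}}_\omega)_\pi \qand (\mu_N)_\omega = \sum_{\pi \in \B\N\C(\chi_\omega)} (\eta^{\Z_{N;1}}_\omega)_\pi
\]
and isolate the $\pi = 1_{\chi_\omega}$ summands to obtain $\rho^{\Z_{N;1}}_\omega = (\nu_N)_\omega - \sum_{\pi \neq 1_{\chi_\omega}} (\rho^{\Z_{N;1}}_\omega)_\pi$ and analogously for $\eta^{\Z_{N;1}}_\omega$. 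By the operator-valued conditionally bi-multiplicativity of $(\kappa, \K)$, each such $(\rho^{\Z_{N;1}}_\omega)_\pi$ reduces to a product of one cumulant of $\Z_{N;1}$ per block of $\pi$, with $L_b, R_b$ insertions built from strictly smaller cumulants; similarly $(\eta^{\Z_{N;1}}_\omega)_\pi$ is a product of $\rho^{\Z_{N;1}}_V$ for interior blocks and $\eta^{\Z_{N;1}}_V$ for exterior blocks. By the inductive hypothesis every factor has leading term of order $1/N$, so any $\pi$-summand with $|\pi| \geq 2$ is of order $1/N^{|\pi|} \leq 1/N^2$; multiplying by $N$ sends these to $0$, while the surviving $N(\nu_N)_\omega = \lambda\nu_\omega$ and $N(\mu_N)_\omega = \lambda\mu_\omega$ terms give the desired limits.

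The main technical point is the careful bookkeeping of the $L_b$ and $R_b$ insertions prescribed by Definitions \ref{BiMulti} and \ref{CondBiMulti}, which route lower cumulant values into the arguments of outer blocks. The crucial observation is that these insertions act on the outer cumulants by multilinearity and never introduce fresh $Z_{N;1}$ factors, so the order of $(\rho^{\Z_{N;1}}_\omega)_\pi$ or $(\eta^{\Z_{N;1}}_\omega)_\pi$ is governed exactly by the number of blocks of $\pi$. Once this is verified, the induction closes and $\S_N$ converges in distribution to a two-faced family with the claimed $(\B, \D)$-valued compound c-bi-free Poisson law of rate $\lambda$ and jump distribution $(\nu, \mu)$.
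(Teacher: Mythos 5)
Your proposal is correct and follows essentially the same route as the paper: linearize the cumulants of $\S_N$ via vanishing of mixed cumulants and identical distribution, then show that for each fixed $\omega$ the single-block term $N(\nu_N)_\omega = \lambda\nu_\omega$ (resp.\ $N(\mu_N)_\omega = \lambda\mu_\omega$) survives while all $\pi$ with $|\pi|\geq 2$ contribute $O(1/N^{|\pi|-1})\to 0$ by conditionally bi-multiplicative factorization. The only cosmetic difference is that the paper obtains the estimate $(\rho_N)_\omega = (\nu_N)_\omega + O(1/N^2)$ directly from the M\"obius inversion formula (each block moment being exactly $\tfrac{\lambda}{N}$ times a fixed quantity), whereas you run the forward moment-cumulant recursion with an induction on $n$; both are valid and yield the same conclusion.
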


\begin{proof}
For each $N \geq 1$, let $(\rho_N, \eta_N)$ be the cumulant series corresponding to $(\nu_N, \mu_N)$. For $n \geq 1$, $\omega: \{1, \dots, n\} \to I \sqcup J$, and $b_1, \dots, b_{n - 1} \in \B$, we have
\begin{align*}
(\rho_N)_\omega(b_1, \dots, b_{n - 1}) &= \sum_{\pi \in \B\N\C(\chi_\omega)}(\nu_N)_\pi(b_1, \dots, b_{n - 1})\mu_{\B\N\C}(\pi, 1_{\chi_\omega})\\
&= (\nu_N)_\omega(b_1, \dots, b_{n - 1}) + O(1/N^2)\\
&= \frac{\lambda}{N}\nu_\omega(b_1, \dots, b_{n - 1}) + O(1/N^2),
\end{align*}
and thus
\begin{align*}
\rho_\omega^\Z(b_1, \dots, b_{n - 1}) &= \lim_{N \to \infty}\rho_\omega^{\S_N}(b_1, \dots, b_{n - 1})\\
&= \lim_{N \to \infty}\left(\lambda\nu_\omega(b_1, \dots, b_{n - 1}) + O(1/N)\right)\\
&= \lambda\nu_\omega(b_1, \dots, b_{n - 1}).
\end{align*}
Similarly, we have $(\eta_N)_\pi(b_1, \dots, b_{n - 1}) = O(1/N^2)$ for $\pi \in \B\N\C(\chi_\omega)$ with at least two blocks, therefore
\[(\eta_N)_\omega(b_1, \dots, b_{n - 1}) = (\mu_N)_\omega(b_1, \dots, b_{n - 1}) + O(1/N^2) = \frac{\lambda}{N}\mu_\omega(b_1, \dots, b_{n - 1}) + O(1/N^2),\]
and thus
\[\eta_\omega^\Z(b_1, \dots, b_{n - 1}) = \lim_{N \to \infty}\eta_\omega^{\S_N}(b_1, \dots, b_{n - 1}) = \lambda\mu_\omega(b_1, \dots, b_{n - 1})\]
as required.
\end{proof}

\subsection{A general operator-valued c-bi-free limit theorem}

We finish this section with an operator-valued analogue of \cite{GS2016}*{Theorem 6.8}.

\begin{lem}\label{LimitMC}
For every $N \in \mathbb{N}$, let $\Z_N = ((Z_{N; i})_{i \in I}, (Z_{N; j})_{j \in J})$ be a two-faced family in a Banach $\mathcal{B}$-$\mathcal{B}$-non-commutative-probability space with a pair of $(\B, \D)$-valued expectations $(\mathcal{A}_N, \bE_{\A_N}, \bF_{\A_N}, \varepsilon_N)$. The following assertions are equivalent.
\begin{enumerate}[$\qquad(1)$]
\item For all $n \geq 1$, $\omega: \{1, \dots, n\} \to I \sqcup J$, and $b_1, \dots, b_{n - 1} \in \mathcal{B}$, the limits
\[\lim_{N \to \infty}N\nu_\omega^{\Z_n}(b_1, \dots, b_{n - 1}) \in \B \qand \lim_{N \to \infty}N\mu_\omega^{\Z_n}(b_1, \dots, b_{n - 1}) \in \D\]
exist.

\item For all $n \geq 1$, $\omega: \{1, \dots, n\} \to I \sqcup J$, and $b_1, \dots, b_{n - 1} \in \mathcal{B}$, the limits
\[\lim_{N \to \infty}N\rho_\omega^{\Z_n}(b_1, \dots, b_{n - 1}) \in \B \qand \lim_{N \to \infty}N\eta_\omega^{\Z_n}(b_1, \dots, b_{n - 1}) \in \D\]
exist.
\end{enumerate}
Moreover, if these assertions hold, then
\begin{align*}
&\lim_{N \to \infty}N\nu_\omega^{\Z_n}(b_1, \dots, b_{n - 1}) = \lim_{N \to \infty}N\rho_\omega^{\Z_n}(b_1, \dots, b_{n - 1}) \qand  \\
&\lim_{N \to \infty}N\mu_\omega^{\Z_n}(b_1, \dots, b_{n - 1}) = \lim_{N \to \infty}N\eta_\omega^{\Z_n}(b_1, \dots, b_{n - 1})
\end{align*}
for all $n \geq 1$, $\omega: \{1, \dots, n\} \to I \sqcup J$, and $b_1, \dots, b_{n - 1} \in \mathcal{B}$.
\end{lem}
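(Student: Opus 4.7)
The plan is to prove, simultaneously by strong induction on $n$, that the equivalence and equality of limits hold for both pairs: $(\nu, \rho)$ on the $\B$-valued side and $(\mu, \eta)$ on the $\D$-valued side. The base case $n = 1$ is immediate, as Notation \ref{MCSeries} yields $\nu_\omega^{\Z_N} = \rho_\omega^{\Z_N} = \bE_{\A_N}(Z_{N; \omega(1)})$ and $\mu_\omega^{\Z_N} = \eta_\omega^{\Z_N} = \bF_{\A_N}(Z_{N; \omega(1)})$ directly from the definitions.

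For the inductive step, fix $n \geq 2$ and assume the assertion for all orders strictly less than $n$. M\"obius inversion of the relation in Definition \ref{MomentCumulant} gives
\[
\nu_\omega^{\Z_N}(b_1, \ldots, b_{n-1}) = \sum_{\pi \in \B\N\C(\chi_\omega)} (\rho_\omega^{\Z_N})_\pi(b_1, \ldots, b_{n-1}),
\]
and rearranging part $(3)$ of Definition \ref{OpVCBFCumulants} yields the analogous identity
\[
\mu_\omega^{\Z_N}(b_1, \ldots, b_{n-1}) = \sum_{\pi \in \B\N\C(\chi_\omega)} (\eta_\omega^{\Z_N})_\pi(b_1, \ldots, b_{n-1}).
\]
The decisive step is this: for any $\pi \in \B\N\C(\chi_\omega)$ with $k = |\pi| \geq 2$ blocks, operator-valued (conditionally) bi-multiplicativity reduces $(\rho_\omega^{\Z_N})_\pi$ and $(\eta_\omega^{\Z_N})_\pi$ to nested products of $k$ block-cumulant evaluations, each of the form $\rho_{\omega'}^{\Z_N}$, $\eta_{\omega'}^{\Z_N}$, or $\kappa_{\omega'}^{\Z_N}$ (in the mixed $\K_\pi$-case) for some $\omega'$ of length strictly less than $n$. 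By the inductive hypothesis each such factor is $O(1/N)$, so, using that $\bE_{\A_N}$, $\bF_{\A_N}$, and $\varepsilon_N$ are bounded, the whole nested expression is $O(1/N^k)$. Hence for every $\pi \neq 1_{\chi_\omega}$ one has $N \cdot (\rho_\omega^{\Z_N})_\pi \to 0$ and $N \cdot (\eta_\omega^{\Z_N})_\pi \to 0$, and since only finitely many partitions appear,
\[
N \nu_\omega^{\Z_N} = N \rho_\omega^{\Z_N} + o(1) \qand N \mu_\omega^{\Z_N} = N \eta_\omega^{\Z_N} + o(1).
\]
Thus the existence of any one of the four scaled limits forces the existence of its companion, and in that case the limits coincide, completing the induction.

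The main technical point, and the step requiring the most care, is justifying that the $\B$-operator insertions produced by the bi-multiplicative reduction in conditions $(3)$ and $(4)$ of Definition \ref{BiMulti} (and the conditional analogue in Definition \ref{CondBiMulti}) preserve the $O(1/N^k)$ estimate. Each $L_b$ or $R_b$ introduced is either a fixed $b_j \in \B$ from the original argument list (of bounded norm, contributing only a harmless constant) or itself the value of a strictly smaller cumulant; in the latter case a secondary induction on the depth of the reduction shows these insertions remain uniformly bounded in $N$, because by the inductive hypothesis they are themselves $O(1/N)$ and in particular bounded. Handling both equivalences in parallel is essential, because a $\K_\pi$-evaluation for multi-block $\pi$ mixes $\kappa$-cumulants on interior blocks with $\K$-cumulants on exterior blocks, so $O(1/N)$ control on both cumulant families must be available simultaneously from the joint induction.
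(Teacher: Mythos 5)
Your proof is correct and follows essentially the same route as the paper: both rest on the moment--cumulant formulae together with the observation that operator-valued conditional bi-multiplicativity makes every multi-block term $(\rho_\omega)_\pi$, $(\eta_\omega)_\pi$ (resp.\ $(\nu_\omega)_\pi$, $(\mu_\omega)_\pi$) of order $O(1/N^{|\pi|}) = O(1/N^2)$, so that $N\nu_\omega = N\rho_\omega + O(1/N)$ and $N\mu_\omega = N\eta_\omega + O(1/N)$. Your added induction on $n$ is not strictly necessary (each assertion already quantifies over all $n$, so every lower-order factor is $O(1/N)$ directly from the hypothesis), but it is harmless, and your explicit attention to the mixed $\kappa$/$\K$ structure of $\K_\pi$ and to the boundedness of the $N$-dependent $\B$-operator insertions is if anything more careful than the paper's one-line justification.
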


\begin{proof}
Suppose assertion $(2)$ holds. Since $(\kappa_{\A_N}, \K_{\A_N})$ is operator-valued conditionally bi-multiplicative, we have
\[(\rho_\omega^{\Z_n})_\pi(b_1, \dots, b_{n - 1}) = O\left(1/N^2\right) \qand (\eta_\omega^{\Z_n})_\pi(b_1, \dots, b_{n - 1}) = O\left(1/N^2\right)\]
for $\pi \in \B\N\C(\chi_\omega)$ with at least two blocks. Hence
\begin{align*}
\lim_{N \to \infty}N\nu_\omega^{\Z_n}(b_1, \dots, b_{n - 1}) &= \lim_{N \to \infty}N\sum_{\pi \in \B\N\C(\chi_\omega)}(\rho_\omega^{\Z_n})_\pi(b_1, \dots, b_{n - 1})\\
&= \lim_{N \to \infty}\left(N\rho_\omega^{\Z_n}(b_1, \dots, b_{n - 1}) + O\left(1/N\right)\right),
\end{align*}
and similarly
\[\lim_{N \to \infty}N\mu_\omega^{\Z_n}(b_1, \dots, b_{n - 1}) = \lim_{N \to \infty}\left(N\eta_\omega^{\Z_n}(b_1, \dots, b_{n - 1}) + O\left(1/N\right)\right)\]
for all $n \geq 1$, $\omega: \{1, \dots, n\} \to I \sqcup J$, and $b_1, \dots, b_{n - 1} \in \mathcal{B}$.

The proof for the other direction is analogous by the operator-valued conditionally bi-multiplicativity of $(\E_{\A_N}, \F_{\A_N})$ and the moment-cumulant formulae from Definitions \ref{MomentCumulant} and \ref{OpVCBFCumulants}.
\end{proof}

\begin{thm}\label{LimitThm}
For every $N \in \bN$, let $(\A_N, \bE_N, \bF_N, \varepsilon_N)$ be a Banach $\B$-$\B$-non-commutative-probability space with a pair of $(\B, \D)$-valued expectations and let $\{\Z_{N; m} = ((Z_{N; m; i})_{i \in I}, (Z_{N; m; j})_{j \in J})\}_{m = 1}^N$ be a sequence of identically distributed two-faced families in $(\A_N, \bE_N, \bF_N, \varepsilon_N)$ which are c-bi-free over $(\B, \D)$. Furthermore, let $\S_N = ((S_{N; i})_{i \in I}, (S_{N; j})_{j \in J})$ be the two-faced family in $(\A_N, \bE_N, \bF_N, \varepsilon_N)$ defined by
\[S_{N; k} = \sum_{m = 1}^NZ_{N; m; k},\,\,\,\,\,k \in I \sqcup J.\]
The following assertions are equivalent.
\begin{enumerate}[$\qquad(1)$]
\item There exists a two-faced family $\Y = ((Y_i)_{i \in I}, (Y_j)_{j \in J})$ in a Banach $\B$-$\B$-non-commutative-probability space with a pair of $(\B, \D)$-valued expectations $(\A, \bE, \bF, \varepsilon)$ such that $\S_N$ converges in distribution to $\Y$ as $N \to \infty$.

\item For all $n \geq 1$, $\omega: \{1, \dots, n\} \to I \sqcup J$, and $b_1, \dots, b_{n - 1} \in \B$, the limits
\[\lim_{N \to \infty}N\nu_\omega^{\Z_{N; m}}(b_1, \dots, b_{n - 1}) \qand \lim_{N \to \infty}N\mu_\omega^{\Z_{N; m}}(b_1, \dots, b_{n - 1})\]
exist and are independent of $m$.
\end{enumerate}
Moreover, if these assertions hold, then the operator-valued bi-free and conditionally bi-free cumulants of $\Y$ are given by
\[\rho_\omega^{\Y}(b_1, \dots, b_{n - 1}) = \lim_{N \to \infty}N\nu_\omega^{\Z_{N; m}}(b_1, \dots, b_{n - 1}) \qand \eta_\omega^{\Y}(b_1, \dots, b_{n - 1}) = \lim_{N \to \infty}N\mu_\omega^{\Z_{N; m}}(b_1, \dots, b_{n - 1})\]
for all $n \geq 1$, $\omega: \{1, \dots, n\} \to I \sqcup J$, and $b_1, \dots, b_{n - 1} \in \B$.
\end{thm}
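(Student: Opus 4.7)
The plan is to reduce the entire theorem to Lemma \ref{LimitMC} together with the characterization of c-bi-freeness via the vanishing of mixed cumulants from Theorem \ref{VanishingEquiv}. The bridge between the two assertions is the identity
\[
\rho_\omega^{\S_N}(b_1, \dots, b_{n - 1}) = N\,\rho_\omega^{\Z_{N; m}}(b_1, \dots, b_{n - 1}) \qand \eta_\omega^{\S_N}(b_1, \dots, b_{n - 1}) = N\,\eta_\omega^{\Z_{N; m}}(b_1, \dots, b_{n - 1}),
\]
which I will establish first. By multilinearity of $\kappa$ and $\K$ (both components of the cumulant pair), expanding $S_{N; k} = \sum_{m = 1}^N Z_{N; m; k}$ yields a sum over tuples $(m_1, \ldots, m_n)$ of cumulants of the form $\kappa_{1_{\chi_\omega}}(Z_{N; m_1; \omega(1)}, C_{b_1}^{\omega(2)} Z_{N; m_2; \omega(2)}, \dots)$ and likewise for $\K$; since the $\Z_{N; m}$ are c-bi-free over $(\B, \D)$, Theorem \ref{VanishingEquiv} forces all such mixed terms (those with $m_k$ not all equal) to vanish, and the identical distribution then collapses the remaining diagonal sum to $N$ copies of the single-family cumulant. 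The same argument applies to the multilinear definition of $(\rho_\omega^{\cdot})_\pi$ and $(\eta_\omega^{\cdot})_\pi$ for any $\pi \in \B\N\C(\chi_\omega)$.

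For the implication $(2) \Rightarrow (1)$, assuming (2) holds, Lemma \ref{LimitMC} applied to the sequence $(\Z_{N; 1})_{N \geq 1}$ gives that $\lim_{N\to\infty} N \rho_\omega^{\Z_{N; 1}}$ and $\lim_{N\to\infty} N \eta_\omega^{\Z_{N; 1}}$ exist in $\B$ and $\D$ respectively and coincide with the moment limits. By the identity above, the cumulant series $\rho^{\S_N}$ and $\eta^{\S_N}$ therefore converge entrywise as $N \to \infty$; define linear maps $\Theta_\omega: \B^{n - 1} \to \B$ and $\Upsilon_\omega: \B^{n - 1} \to \D$ by these limits. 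Lemma \ref{Existence} produces a Banach $\B$-$\B$-non-commutative probability space with a pair of $(\B, \D)$-valued expectations $(\A, \bE, \bF, \varepsilon)$ together with a two-faced family $\Y$ realising $(\Theta, \Upsilon)$ as its operator-valued bi-free and conditionally bi-free cumulants. The moment-cumulant formulae from Definitions \ref{MomentCumulant} and \ref{OpVCBFCumulants} combined with the operator-valued conditionally bi-multiplicativity of $(\E, \F)$ then imply $\nu_\omega^{\S_N} \to \nu_\omega^{\Y}$ and $\mu_\omega^{\S_N} \to \mu_\omega^{\Y}$ entrywise, which is exactly convergence in distribution.

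For the implication $(1) \Rightarrow (2)$, convergence in distribution gives convergence of all the moment series of $\S_N$ to those of $\Y$. Applying the moment-cumulant formulae in the reverse direction shows $\rho_\omega^{\S_N} \to \rho_\omega^{\Y}$ and $\eta_\omega^{\S_N} \to \eta_\omega^{\Y}$, and using the identity above together with the fact that the cumulants of $\Z_{N; m}$ are independent of $m$ (by identical distribution) yields
\[
\lim_{N \to \infty} N \rho_\omega^{\Z_{N; m}}(b_1, \dots, b_{n - 1}) = \rho_\omega^{\Y}(b_1, \dots, b_{n - 1}) \qand \lim_{N \to \infty} N \eta_\omega^{\Z_{N; m}}(b_1, \dots, b_{n - 1}) = \eta_\omega^{\Y}(b_1, \dots, b_{n - 1}).
\]
A second application of Lemma \ref{LimitMC}, this time converting cumulant limits back into moment limits, yields (2) and simultaneously the closing formula for the cumulants of $\Y$.

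The only step requiring real care is the first one: verifying that the mixed-vanishing argument and multilinearity carry through cleanly when $b$-operators are inserted between the $Z$'s (as in Notation \ref{MCSeries}), since the $b$-slots must be distributed along with the $Z$-slots when expanding the sum. This is where the operator-valued conditionally bi-multiplicativity of $(\kappa, \K)$ proved earlier does the work, reducing the calculation to the scalar-style identity used in the scalar analogue \cite{GS2016}*{Theorem 6.8}. Beyond this bookkeeping, the remainder of the proof is a direct invocation of Lemmas \ref{LimitMC} and \ref{Existence}.
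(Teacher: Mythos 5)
Your proposal is correct and follows essentially the same route as the paper: both directions rest on the additivity identity $\rho_\omega^{\S_N}=N\rho_\omega^{\Z_{N;m}}$ (and likewise for $\eta$) obtained from Theorem \ref{VanishingEquiv} together with identical distribution, on Lemma \ref{LimitMC} to pass between moment and cumulant limits, and on Lemma \ref{Existence} to realize the limiting family $\Y$. The only organizational difference is that in $(1)\Rightarrow(2)$ you invert the moment--cumulant formula for $\S_N$ directly to get $\rho_\omega^{\S_N}\to\rho_\omega^{\Y}$, whereas the paper extracts the existence of $\lim_{N\to\infty}N^{|\pi|}(\rho_\omega^{\Z_{N;m}})_\pi$ for each $\pi$ by induction on $n$; these amount to the same computation.
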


\begin{proof}
Suppose assertion $(1)$ holds. For $n \geq 1$, $\omega: \{1, \dots, n\} \to I \sqcup J$, and $b_1, \dots, b_{n - 1} \in \B$, we have
\begin{align*}
&\nu_\omega^{\Y}(b_1, \dots, b_{n - 1}) \\
&= \lim_{N \to \infty}\nu_\omega^{\S_N}(b_1, \dots, b_{n - 1})\\
&= \lim_{N \to \infty}(\E_{\A_N})_{1_{\chi_\omega}}\left(S_{N; \omega(1)}, C^{\omega(2)}_{b_1}S_{N; \omega(2)}, \dots, C^{\omega(n)}_{b_{n - 2}}S_{N; \omega(n)}C^{\omega(n)}_{b_{n - 1}}\right)\\
&= \lim_{N \to \infty}\sum_{t(1), \dots, t(n) = 1}^N(\E_{\A_N})_{1_{\chi_\omega}}\left(Z_{N; t(1); \omega(1)}, C^{\omega(2)}_{b_1}Z_{N; t(2); \omega(2)}, \dots, C^{\omega(n)}_{b_{n - 2}}Z_{N; t(n); \omega(n)}C^{\omega(n)}_{b_{n - 1}}\right)\\
&= \lim_{N \to \infty}\sum_{t(1), \dots, t(n) = 1}^N\sum_{\pi \in \B\N\C(\chi_\omega)}(\kappa_{\A_N})_{\pi}\left(Z_{N; t(1); \omega(1)}, C^{\omega(2)}_{b_1}Z_{N; t(2); \omega(2)}, \dots, C^{\omega(n)}_{b_{n - 2}}Z_{N; t(n); \omega(n)}C^{\omega(n)}_{b_{n - 1}}\right)\\
&= \lim_{N \to \infty}\sum_{\pi \in \B\N\C(\chi_\omega)}N^{|\pi|}(\kappa_{\A_N})_{\pi}\left(Z_{N; m; \omega(1)}, C^{\omega(2)}_{b_1}Z_{N; m; \omega(2)}, \dots, C^{\omega(n)}_{b_{n - 2}}Z_{N; m; \omega(n)}C^{\omega(n)}_{b_{n - 1}}\right)\\
&= \lim_{N \to \infty}\sum_{\pi \in \B\N\C(\chi_\omega)}N^{|\pi|}(\rho_\omega^{\Z_{N; m}})_\pi(b_1, \dots, b_{n - 1}),
\end{align*}
where the second equality follows from Notation \ref{MCSeries} by assuming $\{\omega(k)\}_{k = 1}^n$ intersects both $I$ and $J$ (the special cases that $\omega(k) \in I$ or $\omega(k) \in J$ for all $k$ can be checked similarly), and the fifth equality, which is independent of $m$, follows from the assumptions of c-bi-free independence over $(\B, \D)$ and identical distribution. Since $\nu_\omega^{\Y}(b_1, \dots, b_{n - 1})$ exist for all $n \geq 1$ and $\omega: \{1, \dots, n\} \to I \sqcup J$, it can be shown by induction on $n$ that the limits
\[\lim_{N \to \infty}N^{|\pi|}(\rho_\omega^{\Z_{N; m}})_\pi(b_1, \dots, b_{n - 1})\]
exist for all $n \geq 1$, $\omega: \{1, \dots, n\} \to I \sqcup J$, $\pi \in \B\N\C(\chi_\omega)$, and $b_1, \dots, b_{n - 1} \in \B$. Indeed, the base case $n = 1$ follows from the assumption that
\[\lim_{N \to \infty}N(\kappa_{\A_n})_{1_{\chi_\omega}}\left(Z_{N; m; \omega(1)}\right) = \lim_{N \to \infty}(\kappa_{\A_N})_{1_{\chi_\omega}}\left(S_{N; \omega(1)}\right) = \lim_{N \to \infty}(\E_{\A_N})_{1_{\chi_\omega}}\left(S_{N; \omega(1)}\right) = \E_{1_{\chi_\omega}}(Y_{\omega(1)})\]
exist for all $\omega: \{1\} \to I \sqcup J$. For the inductive step, the limit
\[\nu_\omega^{\Y}(b_1, \dots, b_{n - 1})\]
exists by assumption, and the limit
\[\lim_{N \to \infty}\sum_{\substack{\pi \in \B\N\C(\chi_\omega)\\\pi \neq 1_{\chi_\omega}}}N^{|\pi|}(\rho_\omega^{\Z_{N; m}})_\pi(b_1, \dots, b_{n - 1})\]
exists by induction hypothesis with operator-valued conditionally bi-multiplicativity, thus the limit
\[\lim_{N \to \infty}N\rho_\omega^{\Z_{N; m}}(b_1, \dots, b_{n - 1}),\]
exists, and equals
\[\lim_{N \to \infty}N\nu_\omega^{\Z_{N; m}}(b_1, \dots, b_{n - 1})\]
by Lemma \ref{LimitMC}. On the other hand, it follows from a similar calculation as above that
\begin{align*}
\mu_\omega^{\Y}(b_1, \dots, b_{n - 1}) &= \lim_{N \to \infty}\mu_\omega^{\S_N}(b_1, \dots, b_{n - 1})\\
&= \lim_{N \to \infty}\sum_{\pi \in \B\N\C(\chi_\omega)}N^{|\pi|}(\eta_\omega^{\Z_{N; m}})_\pi(b_1, \dots, b_{n - 1})
\end{align*}
for all $n \geq 1$, $\omega: \{1, \dots, n\} \to I \sqcup J$, and $b_1, \dots, b_{n - 1} \in \B$, and a similar induction argument on $n$ shows that the limits
\[\lim_{N \to \infty}N^{|\pi|}(\eta_\omega^{\Z_{N; m}})_\pi(b_1, \dots, b_{n - 1})\]
exist for all $n \geq 1$, $\omega: \{1, \dots, n\} \to I \sqcup J$, $\pi \in \B\N\C(\chi_\omega)$, and $b_1, \dots, b_{n - 1} \in \B$. In particular, choose $\pi = 1_{\chi_\omega}$ and apply Lemma \ref{LimitMC}, we obtain the existence of the limit
\[\lim_{N \to \infty}N\mu_\omega^{\Z_{N; m}}(b_1, \dots, b_{n - 1}).\]
	
Conversely, suppose assertion $(2)$ holds. By Lemma \ref{LimitMC} and operator-valued conditionally bi-multiplicativity, the limits
\[\lim_{N \to \infty}N^{|\pi|}(\rho_\omega^{\Z_{N; m}})_\pi(b_1, \dots, b_{n - 1}) \qand \lim_{N \to \infty}N^{|\pi|}(\eta_\omega^{\Z_{N; m}})_\pi(b_1, \dots, b_{n - 1})\]
exist for all $n \geq 1$, $\omega: \{1, \dots, n\} \to I \sqcup J$, $\pi \in \B\N\C(\chi_\omega)$, and $b_1, \dots, b_{n - 1} \in \B$. Therefore, by the calculations above,
\begin{equation}\label{LimitingMomentsNu}
\lim_{N \to \infty}\nu_\omega^{\S_N}(b_1, \dots, b_{n - 1}) = \sum_{\pi \in \B\N\C(\chi_\omega)}\lim_{N \to \infty}N^{|\pi|}(\rho_\omega^{\Z_{N; m}})_\pi(b_1, \dots, b_{n - 1})
\end{equation}
and
\begin{equation}\label{LimitingMomentsMu}
\lim_{N \to \infty}\mu_\omega^{\S_N}(b_1, \dots, b_{n - 1}) = \sum_{\pi \in \B\N\C(\chi_\omega)}\lim_{N \to \infty}N^{|\pi|}(\eta_\omega^{\Z_{N; m}})_\pi(b_1, \dots, b_{n - 1}),
\end{equation}
and hence these limits exist. One concludes assertion $(1)$ by using Lemma \ref{Existence} to construct a two-faced family $\Y = ((Y_i)_{i \in I}, (Y_j)_{j \in J})$ in a Banach $\B$-$\B$-non-commutative-probability space with a pair of $(\B, \D)$-valued expectations $(\A, \bE, \bF, \varepsilon)$ and define $\nu_\omega^{\Y}(b_1, \dots, b_{n - 1})$ and $\mu_\omega^{\Y}(b_1, \dots, b_{n - 1})$ to be the corresponding limit in equations \eqref{LimitingMomentsNu} and \eqref{LimitingMomentsMu} respectively.
	
Finally, for $n \geq 1$, $\omega: \{1, \dots, n\} \to I \sqcup J$, and $b_1, \dots, b_{n - 1} \in \B$, we have
\begin{align*}
\sum_{\pi \in \B\N\C(\chi_\omega)}(\rho_\omega^{\Y})_\pi(b_1, \dots, b_{n - 1}) &= \nu_\omega^{\Y}(b_1, \dots, b_{n - 1})\\
&= \lim_{N \to \infty}\nu_\omega^{\S_N}(b_1, \dots, b_{n - 1})\\
&= \sum_{\pi \in \B\N\C(\chi_\omega)}\lim_{N \to \infty}N^{|\pi|}(\rho_\omega^{\Z_{N; m}})_\pi(b_1, \dots, b_{n - 1}),
\end{align*}
and similarly
\[\sum_{\pi \in \B\N\C(\chi_\omega)}(\eta_\omega^{\Y})_\pi(b_1, \dots, b_{n - 1}) = \sum_{\pi \in \B\N\C(\chi_\omega)}\lim_{N \to \infty}N^{|\pi|}(\eta_\omega^{\Z_{N; m}})_\pi(b_1, \dots, b_{n - 1}).\]
A similar induction argument on $n$ shows that
\begin{align*}
&(\rho_\omega^{\Y})_\pi(b_1, \dots, b_{n - 1}) = \lim_{N \to \infty}N^{|\pi|}(\rho_\omega^{\Z_{N; m}})_\pi(b_1, \dots, b_{n - 1}) \qand \\
& (\eta_\omega^{\Y})_\pi(b_1, \dots, b_{n - 1}) = \lim_{N \to \infty}N^{|\pi|}(\eta_\omega^{\Z_{N; m}})_\pi(b_1, \dots, b_{n - 1})
\end{align*}
for all $\pi \in \B\N\C(\chi_\omega)$, from which the last claims follow from Lemma \ref{LimitMC} applied to $\pi = 1_{\chi_\omega}$.
\end{proof}

\end{document}